%
\documentclass[11pt,reqno,a4paper]{amsart}
\usepackage{comment}
\usepackage{paralist}

\addtolength{\textwidth}{4cm}
\addtolength{\hoffset}{-2cm}
\addtolength{\textheight}{4cm}
\addtolength{\voffset}{-2cm}

\usepackage{amssymb}
\usepackage{enumerate}
\usepackage[all]{xy}

\usepackage{pgf,tikz}
\usetikzlibrary{arrows}

\renewcommand{\ge}{\geqslant}
\renewcommand{\le}{\leqslant}
\renewcommand{\geq}{\geqslant}

\newcommand{\rto}{\dasharrow}

\newcommand{\PP}{\mathbb{P}}
\newcommand{\A}{\mathbb{A}}

\newcommand{\FF}{\mathbb{F}}

\newcommand{\cO}{\mathcal{O}}

\newcommand{\infnear}[1][]{>^{#1}}

\DeclareMathOperator{\kod}{kod}
\DeclareMathOperator{\overkod} {\overline {\kod}}

\newtheorem{theorem}{Theorem}[section]
\newtheorem{lemma}[theorem]{Lemma}
\newtheorem{prop}[theorem]{Proposition}
\newtheorem{corollary}[theorem]{Corollary}
\newtheorem{problem}{Problem}
\newtheorem*{problem*}{Problem}
\newtheorem{claim}{Claim}
\newtheorem{conj}[theorem]{Conjecture}

\theoremstyle{definition}

\newtheorem{remark}[theorem]{Remark}
\newtheorem{example}[theorem]{Example}

\newtheorem{property}{Property}

\title[Contractible curves on a rational surface]{Contractible curves on a rational surface}

\author{Alberto Calabri \and Ciro Ciliberto}

\email{alberto.calabri@unife.it}
\curraddr{Dipartimento di Matematica e Informatica,
Universit\`a degli Studi di Ferrara,
Via Machiavelli 30, 44121 Ferrara, Italy,
phone: +39-0532-974067, fax: +39-0532-974003}

\email{cilibert@mat.uniroma2.it}
\curraddr{Dipartimento di Matematica, Universit\`a degli Studi di Roma ``Tor Vergata'',
Via della Ricerca Scientifica, 00133 Roma, Italy, phone: +39-06-7259-4684,
fax: +39-06-7259-4699}

\thanks{2010 Mathematics Subject Classification: 14H50 (Primary), 14E07, 14N20 (Secondary).}

\begin{document}

\begin{abstract} In this paper we prove that if $S$ is a smooth, irreducible, projective, rational, complex surface and $D$ an effective, connected, reduced divisor on $S$, then the pair $(S,D)$ is \emph{contractible} (i.e., there is a birational map $\phi\colon S\dasharrow S'$ with $S'$ smooth such that $\phi_*(D)=0$) if  ${\rm kod}(S,D)=-\infty$. More generally, we even prove that this contraction is possible without blowing up an assigned cluster of points on $S$. Using the theory of peeling, we are also able to give some information in the case $D$ is not connected.  

\end{abstract}

\maketitle

\section{Introduction}

Let $(S,D)$ be a pair with $S$ a smooth, irreducible, projective, complex surface and $D$ an effective, reduced divisor on $S$. The pair $(S,D)$ is said to be \emph{contractible}  if there is a birational map $\phi\colon S\dasharrow S'$ with $S'$ smooth such that $\phi_*(D)=0$, i.e., $D$ is contracted to a finite set of points by $\phi$. The \emph{contractibility problem} consists in finding necessary and sufficient conditions for pairs  $(S,D)$ to be contractible. 

The question of characterizing contractible pairs $(S,D)$ is somehow trivial, unless $S$ is a rational surface (see Proposition \ref {prop:t} below). If $S$ is rational, the problem  has its roots in the study of \emph{Cremona geometry} of the complex projective plane $\PP^2$ (see \cite {CC}  for an historical account).  Classical results (often with incomplete proofs), in the  framework of the so called Italian school of algebraic geometry, go back to Castelnuovo--Enriques \cite{CE}, Marletta \cite {Marletta, Marletta2}, Coolidge \cite[p.~398]{Coolidge}.

The first result on the subject in modern times  is due to Kumar and Murthy in 1982, cf.\ \cite{KumarMurthy}. It can be stated as follows:

\begin{theorem}[Kumar--Murthy] \label{thm:KumarMurthy} Let $(S,D)$ be a pair with $S$ rational and $D$ smooth and irreducible. Then $(S,D)$ is contractible if and only if
the linear systems $|K_S+D|$ and $|2K_S+D|$ are both empty.\end{theorem}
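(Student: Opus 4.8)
The plan is to route the whole statement through the logarithmic Kodaira dimension $\kod(S,D)=\kappa(S,K_S+D)$ of the pair (this is legitimate since $D$ smooth and irreducible means $(S,D)$ is already log smooth), by proving the chain of equivalences
\[
(S,D)\ \text{contractible}\ \Longleftrightarrow\ \kod(S,D)=-\infty\ \Longleftrightarrow\ |K_S+D|=|2K_S+D|=\emptyset .
\]
Three of the four implications are comparatively soft; the remaining one, ``$|K_S+D|=|2K_S+D|=\emptyset\Rightarrow\kod(S,D)=-\infty$'', is where the real work lies.

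Suppose first that $(S,D)$ is contractible, say via $\phi\colon S\dasharrow S'$ with $\phi_*D=0$. By the birational invariance of the logarithmic Kodaira dimension --- in the precise form we need, namely that inserting or deleting a tree of exceptional rational curves in the boundary leaves it unchanged, which is exactly what peeling provides --- one obtains $\kod(S,D)=\kod(S',0)=\kappa(S',K_{S'})=-\infty$, the last equality because $S'$, being birational to $S$, is rational. Conversely $\kod(S,D)=-\infty$ gives immediately $|K_S+D|=\emptyset$ and $|2(K_S+D)|=\emptyset$; and since adding $D$ to an effective member of $|2K_S+D|$ would produce an effective member of $|2(K_S+D)|$, also $|2K_S+D|=\emptyset$. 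This proves the ``only if'' part.

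For the ``if'' part, assume $|K_S+D|=|2K_S+D|=\emptyset$ and grant for the moment that this forces $\kod(S,D)=-\infty$. Then $S\setminus D$ is a smooth open rational surface of logarithmic Kodaira dimension $-\infty$, so it carries an $\A^1$-fibration, which extends to a $\PP^1$-fibration $g\colon S\to\PP^1$; since $D$ is irreducible, either $D$ is a component of a fibre ($D\cdot F=0$) or a section ($D\cdot F=1$). In the fibre-component case $D\cong\PP^1$, and because two distinct components of a fibre meet in at most one point we may successively blow down $(-1)$-curves contained in the fibre through $D$, each meeting $D$ at most once, until $D$ is either an entire smooth fibre --- then blow up a point of $D$ and contract the resulting $(-1)$-curve --- or has itself become a $(-1)$-curve; in either case $D$ is now contracted. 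In the section case we first contract $(-1)$-curves lying in fibres until $g$ becomes a $\PP^1$-bundle, i.e.\ a Hirzebruch surface $\F_n$ on which $D$ is a section: if $D^2\ge0$, blow up $D^2+1$ points of $D$ to make it a $(-1)$-curve and contract it; if $D^2<0$, then $D$ is the negative section $\sigma_n$, and elementary transformations centred at points off $D$ lower $n$ one step at a time down to $\sigma_1\subset\F_1$, which we contract. At each step (contracting a $(-1)$-curve $E$ with $E\cdot D\le1$, or blowing up a point of $S$) one checks that $|K_S+D|$ and $|2K_S+D|$ remain empty, so the reduction is internally consistent and terminates with $D$ contracted. (Alternatively, this implication is a special case of the main theorem of the present paper.)

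The crux --- and the step I expect to be the main obstacle --- is therefore ``$|K_S+D|=|2K_S+D|=\emptyset\Rightarrow\kod(S,D)=-\infty$''. When $g(D)=0$ and $D^2\ge0$ it holds unconditionally: by adjunction $K_S\cdot D=-2-D^2$, so $\bigl(mK_S+(m-k)D\bigr)\cdot D=-2m-kD^2<0$ for all $k\ge0$, whence an effective member of $|m(K_S+D)|$ would contain the irreducible curve $D$ with arbitrarily large multiplicity --- impossible. In the remaining cases --- $g(D)\ge1$, where $(K_S+D)\cdot D=2g(D)-2\ge0$, or $g(D)=0$ with $D^2<0$, where the descent just sketched stalls before reaching $|m'K_S|$ --- the two emptiness hypotheses must genuinely be exploited, and one appeals to the classification of surfaces: the needed input is a logarithmic counterpart of Enriques' criterion ``$\kappa(S)=-\infty\iff P_4=P_6=0$'', now asserting that for an irreducible boundary the vanishing of $h^0(K_S+D)$ and $h^0(2K_S+D)$ alone already detects $\kod(S,D)=-\infty$. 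Concretely one runs the logarithmic minimal model program on $(S,D)$ via peeling; if $\kod(S,D)\ge0$, then on the resulting relatively minimal log model adjunction and log abundance yield an effective pluri-log-canonical divisor which pulls back to an effective member of $|K_S+D|$ or of $|2K_S+D|$, contradicting the hypothesis. Making this last point precise is exactly the structural input that the techniques of this paper --- peeling together with the fine classification of rational surfaces --- are designed to supply.
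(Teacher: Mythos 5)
First, a contextual point: the paper does not prove this statement at all --- it is quoted verbatim from Kumar--Murthy \cite{KumarMurthy}, and the paper in fact \emph{derives} from it the equivalence ``$\kod(S,D)=-\infty \Leftrightarrow P_2(S,D)=0$'' for $D$ smooth irreducible (see the discussion after Theorem \ref{thm:KumarMurthy} and Lemma \ref{lem:mor}, Remark \ref{rem:mor}). So there is no in-paper proof to compare with, and your proposal has to stand on its own.

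It does not, because of the step you yourself flag: ``$|K_S+D|=|2K_S+D|=\emptyset \Rightarrow \kod(S,D)=-\infty$''. Your argument for it is only a gesture: running peeling to an almost minimal model and invoking ``log abundance'' would, if $\kod(S,D)\geqslant 0$, produce an effective member of $|m(K_S+D)|$ for \emph{some}, possibly large, $m$ --- it gives no mechanism whatsoever for descending to $m=1$ or to the non-log divisor $2K_S+D$, which is exactly what the hypothesis lets you exclude. After your (correct) elementary reductions, the genuine case is $D$ smooth rational with $D^2<0$ (note $|K_S+D|=\emptyset$ already forces $g(D)=0$ via $0\to\cO_S(K_S)\to\cO_S(K_S+D)\to\omega_D\to0$ and $q(S)=0$), and handling it is precisely the content of Kumar--Murthy's paper; the closest tool in the present paper is only the Riemann--Roch inequality of Lemma \ref{lem:numeric}(ii), and turning that into the required statement needs a real case analysis that you have not supplied. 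Two further, more minor, remarks: your ``only if'' direction should not appeal to ``birational invariance'' of $\kod$ or to peeling --- log Kodaira dimension is not invariant under arbitrary birational maps that contract boundary components; the correct tool is the monotonicity of Lemma \ref{lem:mor} applied to a resolution of $\phi$, using smoothness of $D$ as in Remark \ref{rem:mor}. And your ``if'' direction, granted $\kod(S,D)=-\infty$, is legitimate (either via the Miyanishi--Sugie--Fujita fibration theorem or directly via Theorem \ref{thm:cool}, whose proof is independent of Kumar--Murthy), but that only shifts all the weight onto the unproved implication above. As it stands the proposal assumes, rather than proves, the heart of the theorem.
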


Given the pair $(S,D)$, for any non--negative integer $m$, the \emph{$m$--log plurigenus} of $(S,D)$ is
\[
P_m(S,D):=h^0(S, \cO_S(m(D+K_S)).
\]
If $P_m(S,D)=0$ for all $m\ge1$, then one says that the \emph{log Kodaira dimension} of the pair $(S,D)$ is $\kod(S,D)=-\infty$. Otherwise
\[\kod(S,D)=\max\left\{ \dim \left( {\rm Im} \left( \phi_{|m(D+K_S)|} \right) \right)\right\}\]
where $\phi_{|m(D+K_S)|} $ is the rational map determined by the linear system $|m(D+K_S)|$, whenever this is not empty. One sets $\kod(S):=\kod(S,0)$, which is the  \emph{Kodaira dimension} of $S$. 

If $(S,D)$ is contractible, with $S$ rational, and $\phi\colon S\rto S'$ is a birational map which contracts $D$ to a finite set of points, there is a commutative diagram
\[
\xymatrix{
& \bar S \ar[rd]^\beta \ar[ld]_\alpha \\
S \ar@{-->}[rr]_\phi & & S'
}
\]
where $\alpha$ and $\beta$ are birational morphisms and the proper transform $\bar D$ of $D$ via $\alpha$ is contracted to a finite union of points by $\beta$. This implies that all irreducible components of $D$ have geometric genus 0 and 
that $\kod (\bar S,\bar D)=-\infty$ (see Lemma \ref {rem:bir}). 

If, in the above setting, $D$ is smooth
then one sees that $\kod(S,D)=\kod (\bar S,\bar D)=-\infty$ (see Lemma \ref {lem:mor}).  So Theorem \ref{thm:KumarMurthy} implies that if $S$ is rational and $D$ is smooth and irreducible, then $\kod(S,D)=-\infty$ if and only if  $P_2(S,D)=0$, which can be seen as a log-analogue of Castelnuovo's rationality criterion for regular surfaces. 

As for extensions of Kumar--Murthy's Theorem to reducible curves, the only known result so far was due to Iitaka \cite {Iit, Iit2}, which can be stated as follows:

\begin{theorem}[Iitaka] \label{thm:Iitaka}
Let $(S,D)$ be a pair with $S$ rational and $D$ with simple normal crossings and at most two irreducible components. Then $P_2(S,D)=0$ if and only if $\kod(S,D)=-\infty$ and, if this happens, then 
 $(S,D)$ is contractible.\end{theorem}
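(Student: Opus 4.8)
The implication $\kod(S,D)=-\infty\Rightarrow P_2(S,D)=0$ is immediate from the definition of the log Kodaira dimension, and the implication ``$(S,D)$ contractible $\Rightarrow\kod(S,D)=-\infty$'' follows from Lemma~\ref{rem:bir} together with the log-birational invariance of $\kod$ (for $D$ with at most two components a log resolution keeps the boundary SNC with the same number of smooth components). So the heart of the matter is to prove that $P_2(S,D)=0$ implies $(S,D)$ contractible; by the two remarks above this will also give $P_2(S,D)=0\Rightarrow\kod(S,D)=-\infty$, closing the circle $P_2=0\Rightarrow$ contractible $\Rightarrow\kod=-\infty\Rightarrow P_2=0$. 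A first, elementary reduction: if $P_2(S,D)=0$ then for each component $D_i$ of $D$ the systems $|K_S+D_i|$ and $|2K_S+D_i|$ are both empty, since a section of either would produce, after adding a suitable effective divisor supported on $D$, a section of $2(K_S+D)$. By Kumar--Murthy's Theorem~\ref{thm:KumarMurthy}, $(S,D_i)$ is then contractible for each $i$; if $D$ is irreducible (hence, being SNC, smooth) this already finishes the proof. The real content is thus the case $D=D_1+D_2$, where one knows that each $D_i$ can be contracted separately and must show that $D_1$ and $D_2$ can be contracted compatibly, or else exhibit a section of $2(K_S+D)$.

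To this end I would run the logarithmic minimal model program for $(S,D)$ via Fujita's \emph{peeling}: write $D=D^{\#}+\mathrm{Bk}(D)$, with $\mathrm{Bk}(D)$ the bark supported on the admissible rational twigs and rods of $D$, and successively contract superfluous $(-1)$-curves to reach an \emph{almost minimal} pair $(S_0,D_0)$ with $S_0$ still rational, $D_0$ still SNC with at most two components, and $\kod(S_0,D_0)=\kod(S,D)$; by Lemma~\ref{rem:bir} it suffices to contract $D_0$. If $K_{S_0}+D_0^{\#}$ is \emph{not} nef, the extremal contraction is a Mori fiber space: $(S_0,D_0)$ is a log del Pezzo surface, or $S_0$ carries a $\PP^1$-fibration $f\colon S_0\to B$ over a rational base along which $D_0$ is ``small''. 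Because $D_0$ has at most two components, only a short explicit list of such configurations occurs, and in each of them one writes down directly a birational morphism (possibly preceded by elementary transformations of the ruled surface) contracting $D_0$ to points, using that each component of $D_0$ is a rational curve meeting the rest of $D_0$ and the fibers of $f$ in few points. If instead $K_{S_0}+D_0^{\#}$ \emph{is} nef, then $\kod(S_0,D_0)\ge0$ and I claim $P_2(S,D)>0$, contradicting the hypothesis. For this I would use Riemann--Roch for the integral divisor $2(K_{S_0}+D_0)=2(K_{S_0}+D_0^{\#})+2\,\mathrm{Bk}(D_0)$ on $S_0$: rationality of $S_0$ and nefness of $K_{S_0}+D_0^{\#}$ give $h^2=0$, the $h^1$ term is controlled, and $\chi\bigl(2(K_{S_0}+D_0)\bigr)>0$ precisely because the boundary has at most two components (so the defect coming from the bark is small enough), whence $h^0\bigl(2(K_{S_0}+D_0)\bigr)>0$ and, by birational invariance of the log plurigenera, $P_2(S,D)=P_2(S_0,D_0)>0$.

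The main obstacle is exactly this effective statement — that the \emph{second} log-plurigenus already detects $\kod\ge0$, with the sharp constant $2$ rather than some large $m$. It is delicate because the boundary of the almost minimal model carries fractional coefficients (the bark), so one must compare $h^0$ of $2(K_{S_0}+D_0)$ with $h^0$ of the round-up of twice the positive part of the Zariski decomposition of $K_{S_0}+D_0^{\#}$, tracking the contributions of the negative part and of the bark; and it is here that the hypothesis of at most two components enters essentially, since for three or more boundary components the underlying numerical inequality fails. A secondary, bookkeeping-type difficulty is to arrange the peeling reduction so that it neither creates extra components of $D$ nor destroys the descent of contractibility along it, which is where Lemma~\ref{rem:bir} and the genus-$0$ property of the components of $D$ are invoked repeatedly.
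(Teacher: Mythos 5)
The trivial direction $\kod(S,D)=-\infty\Rightarrow P_2(S,D)=0$ is fine, but your plan to obtain the converse by ``closing the circle'' through contractibility has a genuine gap: under the paper's conventions (reduced boundary, proper transforms) contractibility does \emph{not} imply $\kod(S,D)=-\infty$ once $D$ has nodes. Concretely, let $D\subset\PP^2$ be two conics meeting transversally at four points: this is snc with two components, and a quadratic transformation based at three of the four nodes sends $D$ to a pair of lines, which is contractible by Example \ref{ex:line}; yet $K_{\PP^2}+D\equiv L$ is effective (nef and big), so $\kod(\PP^2,D)=2$ and $P_2>0$. What contractibility yields is only $\overkod(S,D)=\kod(S)$ (Lemma \ref{lem:ob}); $\kod(S,D)$ itself can drop when nodes of $D$ are blown up (Lemma \ref{lem:mor} is only an inequality), so your appeal to ``log-birational invariance of $\kod$'' is unjustified and the circle does not close. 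This leaves the nontrivial implication $P_2(S,D)=0\Rightarrow\kod(S,D)=-\infty$ resting entirely on your Riemann--Roch claim on the almost minimal model, which you yourself identify as the main obstacle and do not prove: there is no computation of $\chi(2(K_{S_0}+D_0))$, no control of the bark contribution or of $h^1$, so the sharp ``$m=2$'' statement remains open in your write-up. (Note also that the theorem's contractibility conclusion is one-directional, so the implication ``contractible $\Rightarrow\kod=-\infty$'' is neither available nor needed.)

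On the contractibility part your route (peeling, extremal contractions) also diverges from the paper and is under-argued exactly where it matters: in the non-nef case you assert that, since $D_0$ has at most two components, ``a short explicit list'' of log del Pezzo or fibration configurations occurs and each can be contracted by hand, but the rank-one logarithmic del Pezzo case with shrinkable boundary and the non-admissible fork case are precisely the situations that the paper's own peeling theorem (Theorem \ref{thm:main}) cannot resolve in general, and you give neither the list nor an argument that two boundary components excludes or settles these cases. The paper instead proves contractibility (assuming $\kod(S,D)=-\infty$, and even for marked triples) in Remark \ref{itaka} by adapting the proof of Theorem \ref{thm:cool}: reduce to a small pair; if $D_1\cdot D_2>0$ then $D$ is connected and Theorem \ref{thm:cool} applies; if $D_1\cdot D_2=0$, pick a $(-1)$-curve $E$ with $D\cdot E\ge 2$, either contract it to reconnect $D$, or use Fujita's lemma to get $C\in|E+m(K+D)|$ with $m>0$ and analyze a component $M$ of $C$ with $M\cdot K<0$, finishing with rational fibrations, elementary transformations and Proposition \ref{lem:DJ}. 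Finally, your first reduction (Kumar--Murthy applied to each component, which is correct as far as it goes) buys little by itself: Pompilj's example shows that componentwise contractibility does not propagate to $D$, so it cannot substitute for the missing two-component analysis.
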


Concerning reducible curves,
the following theorem, though not immediately related to the contractibility problem, should also be recalled.

\begin{theorem}[Kojima--Takahashi, \cite {KT}]
Let $(S,D)$ be a pair where $S$ is a smooth rational surface and $D$  a smooth, reduced curve on $S$ with at most four irreducible components. Then, $\kod(S,D)=-\infty$ if and only if $P_6(S,D)=0$.

Furthermore, if $(\tilde S,\tilde D)$ is an almost minimal model of $(S,D)$ (see \S \ref {ssec:amin} below), and if $\tilde D$ is connected, then $\kod(S,D)=-\infty$ if and only if $P_{12}(S,D)=0$.
\end{theorem}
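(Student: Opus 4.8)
Here is how I would attack the statement.

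First, the ``only if'' implications hold by the very definition of $\kod$: if $\kod(S,D)=-\infty$ then $P_m(S,D)=0$ for \emph{every} $m\ge 1$, in particular for $m=6$ and for $m=12$. So the whole content is the converse, and the plan is to argue by contraposition: assuming $\kod(S,D)\ge 0$, I would exhibit a nonzero element of $|6(K_S+D)|$ in general, and of $|12(K_S+D)|$ under the connectedness hypothesis.

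The first step would be to pass to an almost minimal model. By the theory of peeling (\S\ref{ssec:amin}) one reaches from $(S,D)$, through a chain of blow-ups of boundary points (the exceptional curve being added to the boundary) and blow-downs of curves meeting the boundary in at most one point, an almost minimal pair $(\tilde S,\tilde D)$ for which $K_{\tilde S}+\tilde D^{\#}$ is nef; here $\tilde D^{\#}=\tilde D-\mathrm{Bk}(\tilde D)$ is the positive part of the Zariski decomposition of $K_{\tilde S}+\tilde D$ (which is pseudo-effective because $\kod(S,D)\ge 0$), and the bark $\mathrm{Bk}(\tilde D)$ is an effective $\mathbb{Q}$-divisor, supported on the admissible twigs, rods and forks of $\tilde D$, with coefficients in $[0,1)$. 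None of these birational operations increases $P_m$ as one descends towards $(\tilde S,\tilde D)$, so $P_m(S,D)\ge P_m(\tilde S,\tilde D)$; and since $\lceil m(K_{\tilde S}+\tilde D^{\#})\rceil=m(K_{\tilde S}+\tilde D)-\lfloor m\,\mathrm{Bk}(\tilde D)\rfloor\le m(K_{\tilde S}+\tilde D)$, one gets $P_m(S,D)\ge h^0(\tilde S,\lceil m(K_{\tilde S}+\tilde D^{\#})\rceil)$ for all $m\ge 1$. Set $\kappa:=\kod(S,D)\ge 0$; by the standard birational invariance of the log Kodaira dimension and the basic properties of peeling, $\kappa$ is also the Iitaka dimension of $K_{\tilde S}+\tilde D^{\#}$, and, $\tilde S$ being rational, $q(\tilde S)=p_g(\tilde S)=0$, which forces the base of any fibration appearing below to be $\PP^1$. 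It thus suffices to prove $h^0(\tilde S,\lceil N(K_{\tilde S}+\tilde D^{\#})\rceil)>0$ for $N=6$ (resp.\ $N=12$).

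Now I would split according to $\kappa\in\{0,1,2\}$, using that $K_{\tilde S}+\tilde D^{\#}$ is nef and that abundance holds for (log) surfaces, so a suitable integral multiple is semiample. If $\kappa=2$, it is moreover big, and Kawamata--Viehweg vanishing for an appropriate round-up together with Riemann--Roch on the rational surface $\tilde S$ gives a nonzero section of a small integral multiple. If $\kappa=1$, the Iitaka fibration of $K_{\tilde S}+\tilde D^{\#}$ realizes $(\tilde S,\tilde D^{\#})$ as a log-elliptic fibration over $\PP^1$, and $N(K_{\tilde S}+\tilde D^{\#})$ is pulled back from a divisor of positive degree on the base as soon as $N$ is a multiple of the least common multiple of the multiplicities of the multiple fibres. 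If $\kappa=0$, then $(K_{\tilde S}+\tilde D^{\#})^2=0$ and $(\tilde S,\tilde D^{\#})$ is a rational log Calabi--Yau pair, and one must bound its index, the least $N$ with $N(K_{\tilde S}+\tilde D^{\#})\sim 0$. In every case the relevant integer is read off from the dual graph of $\tilde D$ together with its bark, and here the hypotheses enter: since peeling creates no new components, $\tilde D$ has at most four components, only finitely many admissible configurations occur, and one checks in each that the integer above divides $6$ --- the fibre multiplicities in the case $\kappa=1$ having $\mathrm{lcm}$ dividing $6$, and the index in the case $\kappa=0$ lying in $\{1,2,3,4,6\}$. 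When instead $\tilde D$ is only required to be connected, the same analysis gives the weaker bound $12=\mathrm{lcm}(1,2,3,4,6)$. Either way $P_N(S,D)\ge h^0(\tilde S,\lceil N(K_{\tilde S}+\tilde D^{\#})\rceil)>0$, contradicting $P_N(S,D)=0$, so $\kod(S,D)=-\infty$.

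The hard part will be the combinatorial core just invoked: the explicit classification, refining Iitaka's analysis \cite{Iit}, of the almost minimal pairs $(\tilde S,\tilde D^{\#})$ with $\kappa\ge 0$ permitted by the hypotheses, and the case-by-case verification that the fibre multiplicities (for $\kappa=1$) and the index (for $\kappa=0$) never exceed the stated bound; this requires a careful enumeration of the possible dual graphs of $\tilde D$ and of its bark, and it is there that the restriction on the number of components, respectively the connectedness of $\tilde D$, does its work. A persistent technical nuisance, present at every step, is the rounding between the $\mathbb{Q}$-divisor $K_{\tilde S}+\tilde D^{\#}$ and the integral divisor $N(K_{\tilde S}+\tilde D)$, and the transfer of the section thus produced back to $|N(K_S+D)|$ on $S$, which has to be organised so that no section is lost along the way.
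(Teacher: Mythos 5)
The first thing to say is that the paper itself contains no proof of this statement: it is recalled from Kojima--Takahashi \cite{KT} purely as background, so there is no internal argument to compare yours with; the only meaningful comparison is with the published proof in \cite{KT}, and your outline does follow its general strategy (pass to an almost minimal model by peeling, where Theorem~\ref{thm:am} in fact gives $P_n(S,D)=P_n(\tilde S,\tilde D)$ on the nose, use the decomposition $K_{\tilde S}+\tilde D=(K_{\tilde S}+\tilde D^\sharp)+{\rm Bk}(\tilde D)$ with $K_{\tilde S}+\tilde D^\sharp$ nef once $\kod(S,D)\geqslant 0$, and divide into the cases $\kod(S,D)\in\{0,1,2\}$). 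Note also that the second assertion is non-vacuous only if the bound on the number of components is dropped there, since $P_{12}(S,D)=0$ forces $P_6(S,D)=0$; your reading of the statement (connectedness of $\tilde D$ replacing the component bound) is therefore the intended one.

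The genuine gap is that everything which makes the theorem say $6$ and $12$ is asserted rather than proved. The entire content of \cite{KT} is the ``combinatorial core'' you defer: the enumeration of the possible barks and dual graphs of $\tilde D$ allowed by the hypotheses; the bound on the multiplicities of the multiple fibres of the Iitaka fibration when $\kod(S,D)=1$, together with the positivity of the degree of the divisor on the base $\PP^1$, which requires a log canonical bundle formula and not merely semiampleness; the bound on the index of $K_{\tilde S}+\tilde D^\sharp$ when $\kod(S,D)=0$; and an actual nonvanishing argument for $P_6$ when $\kod(S,D)=2$, where Kawamata--Viehweg plus Riemann--Roch has to be run on a specific round-up and the rounding error controlled. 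Without these steps the argument only yields $P_N(S,D)>0$ for some unspecified $N$, which is just the definition of $\kod(S,D)\geqslant 0$ and proves nothing about $N=6$ or $N=12$. Two smaller points need repair as well: the bark coefficients lie in $(0,1]$, with $1$ attained exactly for $(-2)$-rods and $(-2)$-forks (Property~\ref{proper:gamma}), not in $[0,1)$ as you state, and this matters precisely in your roundings; and nefness of $K_{\tilde S}+\tilde D^\sharp$ is not part of the definition of almost minimality in \S\ref{ssec:amin} but a consequence of the Miyanishi--Tsunoda structure theory under the standing assumption $\kod(S,D)\geqslant 0$, so it must be invoked as such rather than taken for granted.
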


A classical example of Pompilj's \cite {Pompilj} (see also \cite [Example 1]{CC2}) shows that Kumar--Murthy's and Iitaka's theorems  cannot be extended, as they stand, to curves with more than two components. In  Pompilj's example one has a smooth curve $D$ on a rational surface $S$ with three irreducible components and $|K_S+D|$ and $|2K_S+D|$ both empty, but $|3K_S+D|$ non--empty. 
In this example each of the components of  $D$ is contractible, but $D$ is not. This shows the  difficulty in proving  contractibility  by induction on the number of irreducible components of the curve, as one may be tempted to do (see the historical account in \cite{CC}). The reason is that, after having contracted (if possible) some of the components of a reducible curve $D$, in order to make further contractions one may need to blow--up again points where previous components have been contracted, thus creating loops in the contraction process. 

In \cite{CC2} we posed the following:

\begin{problem}\label{prob:m}
Suppose $(S,D)$ is a pair with $S$ rational and $D$ reduced.  Then does  $\kod(S,D)=-\infty$ imply that $(S,D)$ is contractible?
\end{problem}

As  a little evidence, in  \cite {CC} we answered affirmatively to this question if $(S,D)$ is the embedded resolution of $d\geqslant 12$ distinct lines in $\mathbb P^2$. However the problem is  open in its full generality. 

The present paper is devoted to give an affirmative answer to Problem \ref {prob:m} in some cases.
After some preliminaries presented in \S  \ref {S:Iitaka}, we prove in Theorem \ref {thm:cool} that the answer  to Problem \ref{prob:m}  is affirmative if $D$ is connected.
This could be proved also as a consequence of the following deep:

\begin{theorem}[{Miyanishi-Sugie, Fujita; cf. \cite[Theorem 2.1.1] {M}}]
Let $D$ be a reduced connected divisor on a rational surface $S$ such that $\kod(S,D)=-\infty$.
Then there exists a morphism $h\colon S\setminus D \to J$, with $J$ a curve, such that any fibre of $h$ is either isomorphic to $\A^1$ or to $\PP^1$.
\end{theorem}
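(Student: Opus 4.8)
The plan is to derive the theorem from the structure theory of almost minimal models provided by the theory of peeling (cf.\ \S\ref{ssec:amin} and \cite{M}). Put $X:=S\setminus D$. First I would replace the given pair by a smooth projective surface $V$ containing $X$ as the complement of a reduced divisor $B$ with simple normal crossings: an embedded resolution of the singularities of $D$ replaces $D$ by its reduced total transform and leaves $X$ unchanged, and by Lemmas \ref{rem:bir} and \ref{lem:mor} one still has $\kod(V,B)=-\infty$, with $V$ rational. Then I would run the peeling process, successively contracting the superfluous $(-1)$--curves; by the same lemmas $\kod=-\infty$ is preserved, while each such contraction modifies $X$ only in a way immaterial for the existence of the fibration sought (contracting a $(-1)$--curve that is a component of $B$ leaves $X$ untouched, the centre lying on $B$). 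So one may assume that $(V,B)$ is almost minimal, $V$ rational, $X=V\setminus B$ and $\kod(V,B)=-\infty$; write $(K_V+B)^{+}:=K_V+B-\mathrm{Bk}(B)$, with $\mathrm{Bk}(B)$ the bark of $B$.

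Since $(V,B)$ is almost minimal with $\kod(V,B)=-\infty$, the $\mathbb{Q}$--divisor $(K_V+B)^{+}$ is not nef, so there is a $(K_V+B)^{+}$--negative extremal ray, which I would contract. Almost minimality prevents this contraction from being birational within the peeling calculus, so it is either the structure morphism of a Mori fibre space $f\colon V\to J_0$ onto a smooth curve with general fibre $F\cong\PP^1$ --- and then $J_0\cong\PP^1$ since $V$ is rational --- or, if $\rho(V)=1$, a contraction to a point, forcing $V\cong\PP^2$ (the analogous sporadic case $V\cong\FF_n$ may also arise); the latter cases I would settle at the end by a single blow--up at a point of $B$, which brings one back to the fibred situation (for $V\cong\PP^2$ with $B$ an irreducible conic this means using the pencil of conics having contact of order four with $B$ at one of its points). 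In the fibred case, combining $K_V\cdot F=-2$ with $\mathrm{Bk}(B)\cdot F\ge0$ and $(K_V+B)^{+}\cdot F<0$ yields $B\cdot F\le1$, possibly after replacing $f$ by another ruling of $V$; so assume $B\cdot F\le1$.

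Now set $h:=f|_{X}\colon X\to J_0$. If $B\cdot F=0$ then $B$ lies in fibres of $f$, hence --- being connected --- in a single fibre; if $B\cdot F=1$ then exactly one component $H$ of $B$ dominates $J_0$, it is a section of $f$, and $B-H$ is vertical. In either case the general fibre of $h$ is a whole fibre $F\cong\PP^1$ (if $B\cdot F=0$), or $F$ with the point $H\cap F$ removed, i.e.\ $\A^1$ (if $B\cdot F=1$). For the finitely many special fibres I would use that a fibre of a $\PP^1$--fibration on a smooth rational surface is a tree of $\PP^1$'s, combined with almost minimality --- equivalently, with the negativity of $(K_V+B)^{+}$ on the fibres of $f$ --- and with the connectedness of $B$, to show that every connected component of $f^{-1}(b)\setminus B$ is, up to multiplicity, isomorphic to $\A^1$ or to $\PP^1$, and that $f^{-1}(b)\subseteq B$ for only finitely many $b$. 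Deleting those finitely many points from $J_0$ produces the curve $J$ and the morphism $h\colon X\to J$ whose (reduced) fibres are all isomorphic to $\A^1$ or to $\PP^1$.

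The hard part is the analysis of the special fibres in the last paragraph: one must exclude the appearance, after removal of $B$, of a connected component of a fibre of $f$ that is neither $\A^1$ nor $\PP^1$ (for instance $\PP^1$ minus two points, or a singular rational curve), and similarly exclude the possibility $B\cdot F=2$ for the general fibre. Both are precisely where the full strength of the peeling calculus enters --- that no further $(-1)$--curve can be peeled off $B$, and that $(K_V+B)^{+}$ is negative on the fibres of $f$ --- together with the hypothesis that $D$, hence $B$, is connected. One must also verify, throughout all the birational modifications, that $X=S\setminus D$ is altered only in controlled ways, so that the fibration built on the almost minimal model genuinely descends to $S\setminus D$; this last check is routine but indispensable, as the statement concerns $S\setminus D$ itself.
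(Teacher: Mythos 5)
First, note that the paper does not prove this statement at all: it is quoted as a deep external result (Miyanishi--Sugie, Fujita, cf.\ \cite[Theorem 2.1.1]{M}), so there is no internal proof to compare with; your proposal must therefore stand on its own, and as it stands it has a genuine gap. The decisive error is in your treatment of the non-fibred case of the extremal contraction. After passing to an almost minimal model, the correct dichotomy (this is exactly Proposition \ref{prop:extr} of the paper, stated on the bark-contraction $\bar S$ of \S\ref{ssec:barkc}) is: either (i) the extremal ray gives a base point free pencil of rational curves, or (ii) $\rho(\bar S)=1$ and $-(\bar D+K_{\bar S})$ is ample, i.e.\ one is in the rank-one \emph{logarithmic del Pezzo} case. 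The rank-one condition lives on the singular surface $\bar S$ obtained by contracting ${\rm Supp}({\rm Bk}(B))$, not on the smooth model $V$; in case (ii) the surface $V$ can have arbitrarily large Picard number and is in no way forced to be $\PP^2$ or $\FF_n$, so your plan to ``settle the latter cases by a single blow-up at a point of $B$'' does not meet the actual case that arises. That case is precisely the heart of the Miyanishi--Sugie--Fujita theorem: producing an $\A^1$-fibration on the complement of a rank-one log del Pezzo with non-shrinkable (or shrinkable) boundary requires the substantial structural results on such surfaces (cf.\ \cite[Lemma 1.14.5, Lemma 3.14.6, Theorem 3.15.1]{M}, \cite[Lemma 11, Theorem 12]{MT}), whose general classification is still open (Conjecture \ref{conj:MT}, and \cite{KM}); indeed the paper's own Theorem \ref{thm:main}, which runs exactly the strategy you outline, is forced to leave the shrinkable-boundary log del Pezzo and the non-admissible fork cases unresolved. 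A proposal that disposes of this case in one line has assumed away the main difficulty.

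Two further points. In the fibred case, your inequality only gives $B\cdot F<2+{\rm Bk}(B)\cdot F$; the clean conclusion $B\cdot F\le 1$ comes from the fact that the pencil is pulled back from $\bar S$, so that $\mathfrak f\cdot {\rm Bk}(B)=0$ (as in \S\ref{ssec:i} of the paper), not merely ${\rm Bk}(B)\cdot F\ge 0$ -- you flag this, but it is a needed fix, not an optional refinement. Also, your claim that the almost-minimalization leaves $X=S\setminus D$ essentially unchanged is justified only when the contracted $(-1)$-curve is a component of $B$; the peeling calculus (Property \ref{proper:off}) may contract $(-1)$-curves not contained in $B$, which does change the open surface, so the descent of the fibration to the original $S\setminus D$, together with the analysis of the degenerate fibres (every fibre $\A^1$ or $\PP^1$, after shrinking the base), is real work that your last paragraph defers rather than performs. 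In sum, the skeleton is the standard one, but the two places where the theorem is actually hard -- the rank-one log del Pezzo case and the fibre-by-fibre analysis on the original open surface -- are respectively misidentified and left open.
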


However, we prove more. Indeed, in \S \ref {ss:mark} we introduce the concept of a \emph{marked triple} $(S,D,\mathfrak K)$, where we add to the pair $(S,D)$ a \emph{cluster} $\mathfrak K$, i.e., a finite set of proper or infinitely near points on $S$ (see \S \ref {ssec:cluster} for a precise definition).  We define the action of birational maps on pairs $(D,\mathfrak K)$ (see again  \S \ref {ssec:cluster}) and we  introduce the concept of  \emph{contractible triples} $(S,D,\mathfrak K)$ (see \S \ref {lem:numeric}). 
Theorem  \ref {thm:cool} says that  $(S,D,\mathfrak K)$, with $S$ rational and $D$ connected, is contractible if  $\kod(S,D)=-\infty$. The extension to marked triples is motivated by the need of keeping track of 
\emph{previously contracted} components of $D$, as we mentioned above. 

The proof of Theorem  \ref {thm:cool}  uses standard techniques in surface theory. Mori's theory is however hidden in it, under the form of a lemma by Fujita's  (see \cite {Fu}). In Remark \ref {itaka} we sketch the  proof of (an extended version of)  Iitaka's Theorem \ref  {thm:Iitaka}, which is not conceptually different from the original, but is definitely shorter. In \S \ref {sec:appl} we give a couple of applications. 

The assumption of connectedness of $D$ plays a central role in the proof of Theorem \ref {thm:cool}  and we have been unable to do without it. However we have been trying a different approach to the problem, which, though not 	exhaustive, gives some information even in the non--connected case. Indeed we prove a different contraction criterion, i.e., Theorem \ref {thm:main}, in \S \ref {sec:peelthm}. This is based on  Miyanishi--Tsunoda's \emph{theory of peeling} (see \cite {MT, M}), which we briefly recall in 
\S \ref {sec:peel} for the reader's convenience. Theorem \ref {thm:main} basically says that if $(S,D,\mathfrak K )$ is a marked triple, with $(S,D)$ almost minimal, $S$ rational and $\kod(S,D)=-\infty$, then $(S,D)$ is contractible unless, perhaps, either there is a birational morphism $\phi\colon S\to \bar S$, contracting $D$ to (singular) points, with $\bar S$ a normal logarithmic del Pezzo surface of rank 1 (this is called a \emph{logarithmic del Pezzo surface of rank 1 with shrinkable boundary}, see the definition in \S \ref {ssec:barkc}), or $D$ contains one, and only one, very specific connected component, called a \emph{non--admissible fork} (see \S\ref {ssec:def}). 

The classification of logarithmic del Pezzo surfaces of rank 1 is still an open problem in its generality (see Conjecture \ref {conj:MT} below). Keel and McKernan gave in \cite {KM} a classification theorem (Theorem 23.2), which applies to all but a bounded family of rank one logarithmic del Pezzo surfaces.
A case by case analysis (most likely to be quite hard) based on  Keel--McKernan's results could possibly shed some more (though not decisive) light on the resolution of Problem \ref {prob:m}.

\subsection*{Notation} We set, as usual, $\FF_n:=\PP(\mathcal O_{\PP^ 1}\oplus \mathcal O_{\PP^ 1}(-n))$, with the structure morphism $f_n:\FF_n\to \PP^ 1$. We will denote by $E$ an irreducible section of $f_n$ with $E^ 2=-n$ (which is unique if $n\neq 0$), and by $F$ a fibre of $f_n$. One has $K_{\FF_n}\equiv -2E-(n+2)F$. 

We use the symbol $\equiv$ for linear equivalence. If $f\colon S\to S'$ is a birational morphism between smooth surfaces and $P$ is a point of $S'$ where $f^ {-1}$ is not defined, then one can consider the \emph{$(-1)$--cycle} $D$ on $S$ (usually a non--reduced divisor)  contracted to $P$ by $f$: $D$ is 1-connected, $K_S\cdot D=D^ 2=-1$. 

The concept of $1$--connected effective divisors on a smooth surface is well known and we freely use it. For the rest, we use standard notation and concepts in algebraic geometry. 
\section{Preliminaries} \label{S:Iitaka}

Let $S$ be a smooth, irreducible, projective, complex surface and $D$ an effective divisor on $S$. The \emph{support} ${\rm Supp}(D)
$ of $D$ is the reduced divisor sum of the irreducible components of $D$. 
The divisor $D$ will usually be for us non--zero and reduced, i.e., $D={\rm Supp}(D)$ in which case $D$ is called a \emph{curve}. We will often consider the case in which the curve $D$ has \emph{simple normal crossing singularities} (shortly, $D$ is snc), i.e., each component of $D$ is smooth and $D$ has at most nodes. In this case we will say that the pair $(S,D)$ is \emph{log smooth}.  

\subsection{Infinitely near points}\label{ssec:infnear}
Let $S$ and $S'$ be smooth, irreducible, projective surfaces.
Any birational morphism $\sigma\colon S'\to S$
is the composition of a certain number $n$
of blow--ups $\sigma_i\colon S_{i}\to S_{i-1}$
at a point $P_i\in S_{i-1}$ with exceptional divisor $E_i$ on $S_i$, for $i=1,\ldots,n$
\begin{equation}\label{eq:seq1}
\sigma \colon S'=S_{n}
  \xrightarrow{\,\sigma_n\,}  S_{n-1}
  \xrightarrow{\,\sigma_{n-1}\,}
  \cdots
  \xrightarrow{\,\sigma_{2}\,}  S_{1}
  \xrightarrow{\,\sigma_{1}\,}  S_0=S.
\end{equation}

Let $P\in S$ be a point.
        One says that $Q$ is an \emph{infinitely near point to $P$ of (vicinity) order $n$} on $S$,
and we write $Q\infnear[n] P$ (or simply $Q\infnear P$ if $n$ is understood), if there exists
a birational morphism $\sigma\colon S'\to S$ as in \eqref{eq:seq1},
such that $P_1=P$, $\sigma_i(P_{i+1})=P_{i}$, $i=1,\ldots,n-1$,
and $Q\in E_n$. Points of vicinity order 0 are
the points of $S$, which are called \emph{proper points}. We denote by $\mathfrak P(S)$
the set of infinitely near points on $S$ and, abusing terminology, we refer to $\mathfrak P(S)$ as the set of \emph{points on} $S$. 


Given a curve $C$ on $S$, one says that it \emph{passes through the point $Q\infnear P$}, with $P\in S$, $Q\in S'$ and $\sigma\colon S'\to S$ as above, if 
the proper transform $C'$ of $C$ on $S'$ passes through $Q$. One also says that $Q$ is infinitely near to $P$ \emph{along} $C$. The notion of infinitely near points which are base points for a linear system of curves on $S$ is then clear. 

\subsection{Clusters}\label{ssec:cluster}

In this paper a \emph {cluster} $\mathfrak K$ on $S$ is a finite subset of $\mathfrak P(S)$
(note that in \cite{Alberich} a different definition of cluster is used).
The \emph{support} ${\rm Supp}(\mathfrak K)$ of $\mathfrak K$ is the set of proper points $P$ such that $Q\infnear P$ for $Q\in \mathfrak K$.  The points of ${\rm Supp}(\mathfrak K)$ are not required to be in $\mathfrak K$.  A cluster is \emph{simple} if ${\rm Supp}(\mathfrak K)$ consists of one point. Every cluster is a finite union of simple clusters. The concept of a curve passing through a cluster is clear. 
Clusters have a partial ordering $\mathfrak K\leqslant \mathfrak K'$ which is given by containment. The \emph{(vicinity) order} of a cluster is the maximum order of a point in the cluster.

Given a cluster $\mathfrak K$ on $S$, there is a surface $S_\mathfrak K$ and a birational morphism 
$\phi_{\mathfrak K}\colon S_\mathfrak K\to S$, where each point of the cluster has been blown--up and 
$S_\mathfrak K$ is minimal under this condition. We will denote by $E_\mathfrak K$ the  sum of all  proper transforms on $S_\mathfrak K$ of the exceptional divisors of blow--ups of points of $\mathfrak K$. 
The cluster $\mathfrak K$ on $S$ is determined by the triple $(S_\mathfrak K, E_\mathfrak K, \phi_{\mathfrak K})$. 

\begin{remark}\label{rem:a}  Let $Z$ be a zero--dimensional scheme on $S$. There is a smooth surface $S'$, a birational morphism $\phi\colon S'\to S$ and a divisor $E$ on $S'$ such that $\phi_*(\mathcal O_{S'}(-E))=\mathcal I_{Z,S}$.  Then ${\rm Supp}(E)$ determines a cluster $\mathfrak K_Z$ on $S$, called the \emph{supporting cluster} of $Z$.  Note that $E$ is reduced if and only if $Z$ is \emph{curvilinear}, i.e., $Z$ is a subscheme of a smooth curve $C$ on $S$. In this case $Z$ is uniquely determined by $\mathfrak K_Z$. \end{remark}

\begin{remark}\label{rem:aa}  Given a triple $(S',E',\phi)$, with $\phi\colon S'\to S$ a birational morphism and $E'$ an effective, reduced divisor on $S'$ which is contracted to a union of points by $\phi$, there is a unique  cluster $\mathfrak K$ on $S$ and a unique birational morphism 
$\varphi_\mathfrak K: S' \to S_\mathfrak K$ such that $\phi=\phi_\mathfrak K\circ \varphi_\mathfrak K$, 
$\varphi_\mathfrak K(E')=E_\mathfrak K$, and no component of $E'$ is contracted to a point by $\varphi_\mathfrak K$. 
In particular $S_\mathfrak K$ is uniquely determined up to isomorphisms. \end{remark}

Let $f\colon S\dasharrow S'$ be a birational map between smooth, irreducible, projective surfaces. This induces a birational map $f_\mathfrak K\colon S_\mathfrak K\dasharrow S'$. 
We have a diagram
\begin{equation}\label{eq:res}
\xymatrix{
& \tilde S \ar[rd]^\beta \ar[ld]_\alpha \\
S_\mathfrak K \ar[rd]_{\phi_{\mathfrak K}} \ar@{-->}[rr]^{f_\mathfrak K} & & S'\\
& S \ar@{-->}[ru]_f}
\end{equation}
where $\alpha$ and $\beta$ are sequences of blow--ups. Let  $\tilde E_\mathfrak K$ be the proper transform of $E_\mathfrak K$ on $\tilde S$. We set
\[
{\rm div}_f(\mathfrak K)=\beta_*(\tilde E_\mathfrak K)
\]
which is called the \emph{divisorial part} of the \emph{image} of $\mathfrak K$ via $f$. Let $\tilde E_{\mathfrak K,0}$ be the maximal subdivisor of $\tilde E_\mathfrak K$ contracted to points by $\beta$. By Remark \ref {rem:aa}, this  determines a cluster $\mathfrak K'$ on $S'$. We set
\[
{\rm cl}_f(\mathfrak K)=\mathfrak K'
\]
called the \emph{image cluster} of $\mathfrak K$. 

We will say that $f$ \emph{does not blow--up the cluster} $\mathfrak K$ if ${\rm div}_f(\mathfrak K)=0$.

\begin{remark}\label{rem:bb} Morphisms do not blow--up any cluster.  Moreover, if $f$ does not blow--up $\mathfrak K$ and $\mathfrak K'\leqslant \mathfrak K$, then $f$ does not blow--up $\mathfrak K'$.  \end{remark}

\subsection {Markings}\label {ss:mark} Let $S$ be a smooth, irreducible, projective surface. A \emph{marked pair} on $S$ is a pair $(D,\mathfrak K)$ with $D$ an effective, reduced divisor on $S$ and $\mathfrak K$ is a cluster on $S$, called the \emph{marking} of the pair, whereas $D$ is the \emph{divisorial part} of the pair.
The triple $(S,D,\mathfrak K)$ will be called a \emph{marked triple}.  If the cluster $\mathfrak K$ is empty, we write  $(S,D)$ instead of $(S,D,\mathfrak K)$.  

Markings have a partial ordering
\[
(D,\mathfrak K)\leqslant (D',\mathfrak K')\quad \stackrel{\rm def}\Longleftrightarrow \quad D\leqslant D' \quad \text{and}\quad \mathfrak K\leqslant \mathfrak K'.
\]

Let $f: S\dasharrow S'$ be a birational map and let $(D,\mathfrak K)$ be a marking. We want to define $f_*(D,\mathfrak K)$ which will be a marked pair $(D',\mathfrak K')$ on $S'$, and we will then say that $f$ maps $(S,D, \mathfrak K)$ to $(S', f_*(D,\mathfrak K))$. 

We have a diagram
\[
\xymatrix{
& \tilde S \ar[rd]^\beta \ar[ld]_\alpha \\
S \ar@{-->}[rr]_{f} & & S'}
\]
with $\alpha, \beta$ sequences of blow--ups. We let $\tilde D$ be the proper transform of $D$ on $\tilde S$. We set
\[
D'=\beta_*(\tilde D)+{\rm div}_f(\mathfrak K). 
\]
Let $\tilde D_0$ be the maximal subdivisor of $\tilde D$ contracted to points by $\beta$. By Remark \ref {rem:aa}, this determines a cluster $\mathfrak D$ of $S'$, and we define
\[
\mathfrak K'={\rm cl}_f(\mathfrak K)\cup \mathfrak D. 
\]

\subsection{Cremona transformations} Let $S$ be a smooth, irreducible, projective surface. Consider a rational dominant map $f\colon S\dasharrow X$, with $X$ an irreducible, projective variety. If  $X$ is non--degenerate in $\PP^ r$,  there is a fixed components free linear system $\mathcal L$ of dimension $r$ on $S$ such that $\mathcal L$ is the pull--back via $\phi$ of the hyperplane linear system of $\PP^ r$ and  $f$ coincides with the map $\phi_\mathcal L$ determined by $\mathcal L$. 

\begin{example}\label{ex:nb} Let $f\colon S\dasharrow S'$ be a birational map between smooth, irreducible, projective surfaces and assume that $f=\phi_\mathcal L$.  Let $\mathfrak K$ be a cluster on $S$. We want to give conditions under which $f$ does not blow--up $\mathfrak K$. We assume $\mathfrak K$ simple, with support $P\in S$ (the non--simple case can be treated similarly). By Remark \ref{rem:bb} we may assume that $P\in \mathfrak K$.

If $f$ is a morphism, it does not blow--up $\mathfrak K$.  Hence,  
if $P$ is not a base point for $\mathcal L$, then $f$ does not blow--up $\mathfrak K$. 

Suppose that $P$ is a base point for $\mathcal L$, that the general curve in $\mathcal L$ is smooth at $P$ and that there is a smooth curve $C$ through $P$ such that the intersection multiplicity of $C$ with the general curve in $\mathcal L$ is $m>0$. In other words $P$, and its subsequent infinitely near points along $C$ up to order $m$, are base points for $\mathcal L$. We claim that, if $m$ is larger that the order of $\mathfrak K$, then  $f$ does not blow--up $\mathfrak K$. 

Indeed, \eqref {eq:res} specifies as follows 
\[
\xymatrix{
&\tilde S \ar[ld]_\alpha \ar[dr]^{\tilde\beta} \ar@/^3.5pc/[ddr]^ \beta\\
S_\mathfrak K\ar[d]_{\phi_\mathfrak K}
\ar@{-->}[rr]^{\bar f_\mathfrak K}
&&\bar S  \ar[lld]_{\bar f} \ar[d]^{\bar\beta} \\
S\ar@{-->}[rr]_f & & S'
}
\]
where:\\
\begin{inparaenum}
\item [(i)] the lower right triangle is the resolution of the indeterminacies of $f$, hence it 
is the composition of the blow--ups of $P$ and of its subsequent infinitely near points along $C$ up to order $m$, plus perhaps other blow--ups at points $P'$ with $P'\not \infnear P$. Hence the curve on $\bar S$ contracted by $\bar f$ to $P$ is a chain $E$ of rational curves $E_1+\cdots+E_m$, where
$E_i\cdot E_j=\delta_{i,j-1}$, $1\leqslant i<j\leqslant m$, and $E_i^ 2=-2$ if   $i=1,\ldots, m-1$ whereas $E_m^2=-1$;\\
\item [(ii)] $f_\mathfrak K=f\circ \phi_\mathfrak K$ and $\bar f_\mathfrak K$ is determined by $f$;\\
\item [(iii)] since the order of $\mathfrak K$ is smaller than $m$, the curves in $\tilde E_{\mathfrak K}$
are either contracted by $\tilde \beta$ to points  or are mapped to one of the curves $E_1,\ldots,E_{m-1}$;\\
\item [(iv)]  $\bar \beta$ contracts  $E_1+\cdots+E_{m-1}$ to a point. 
\end{inparaenum}

Our claim follows from (iii) and (iv).   

\end{example}

\begin{example} \label {ex:eltr} We recall the \emph{elementary transformations} of surfaces $\FF_n$. Pick a point $P\in \FF_n$ and consider the diagram
\[
\xymatrix{
& S' \ar[rd]^\beta \ar[ld]_\alpha \\
\FF_n \ar@{-->}[rr]_{{\rm elm}_P} & & S}
\]
where $\alpha$ is the blow--up of $\FF_n$ at $P$ and $\beta$ is the contraction of the proper transform 
of the fibre of $|F|$ passing through $P$ on $S'$. Then 
\[ S=
\Bigg \{  \begin{array}{ccc}
\FF_{n+1} &\text {if}& P\in E \\
\FF_{n-1} &\text {if} &P\not\in E.\\
\end{array}
\] 
The map ${\rm elm}_P$ is called the \emph{elementary transformation based at $P$}.

More generally, one can make elementary transformations when we have a surface $S$ and a base point free pencil $|F|$ of rational curves.\end{example}

If $S$ is rational, we have a birational map $\phi: S\dasharrow \PP^2$, which is determined by a linear system $\mathcal L$ of dimension 2 whose general element  is an irreducible curve of geometric genus 0 and two general curves of $\mathcal L$ intersect transversely at one point off the base points of $\mathcal L$. Any such a linear system is called a \emph{homaloidal net}. 

In particular,  \emph{Cremona transformations} of $\PP^ 2$, i.e., birational maps $\phi: \PP^ 2\dasharrow \PP^ 2$, are of the 
form $\phi_\mathcal L$, with $\mathcal L$ homaloidal nets of plane curves. The Cremona transformation $\phi_\mathcal L$ is said to be \emph {based} at the base locus scheme of the homaloidal net $\mathcal L$. 

\begin{example} \label {ex:DJ} Consider on $\FF^ n$ a complete linear $|E+dF|$ of sections of $\FF_n\to \PP^ 1$. If $d\geqslant n$,  then $|E+dF|$ is base point free, of dimension $2d-n+1$ and self--intersection $2d-n$, and the general curve in $|E+dF|$ is smooth, irreducible and rational.

Let us fix $C\in |E+dF|$ smooth, and an effective divisor $D$ of degree $2d-n-1$ on $C$. Let $\mathcal L$ be the linear system of curves in $|E+dF|$ which cut out on $C$ a divisor containing $D$. Then  $\mathcal L$ is a homaloidal net determining a birational map $\phi_\mathcal L: \FF_n\dasharrow \PP^ 2$ and $\mathcal L$ has the curvilinear base locus scheme $D$ considered as a subscheme of $C$, which is determined by its supporting cluster $\mathfrak K_D$ (see Example \ref {rem:a}).
In the case $n=1$, we have a diagram
\[
\xymatrix{
& \FF_1 \ar[rd]^{\phi_\mathcal L} \ar[ld]_ \pi \\
\PP^ 2 \ar@{-->}[rr]_{\gamma} & & \PP^ 2}
\]
where $\pi$ is the blow--down of the curve $E$ to a point $P\in \PP^ 2$. The map $\gamma$ is determined by a homaloidal net of plane curves of degree $d$ with multiplicity $d-1$ at $P$ and a further curvilinear base locus scheme, or cluster, of degree $2d-2$.
A Cremona map of  type $\gamma$ is called a \emph{De Jonqui\`eres transformation}. The case $d=2$ is the case of \emph{quadratic transformations}. 
\end{example}

\subsection{Curves on a surface} Consider a pair $(S,D)$ with $D$ a curve on $S$. 

We will write $\ell(D)$ (or simply $\ell$) to denote the number of irreducible components of $D$. We will denote by $\mathfrak G(D)$ the  \emph{vector weighted graph} of $D$, i.e., the graph:\\\begin{inparaenum}
\item with $\ell$ vertices $\mathfrak d_1,\ldots, \mathfrak d_\ell$ corresponding to the irreducible components $D_1,\ldots, D_\ell$ of $D$, each with  vector weight   $(D_i^ 2,p_a(D_i))$, for $1\leqslant i\leqslant \ell$;\\
\item  for each pair of indices $(i,j)$ such that $1\leqslant i<j\leqslant \ell$ and for each intersection point $p$ of $D_i$ and $D_j$, there is an edge $\mathfrak e_{i,j,p}$ joining $\mathfrak d_i$ and $\mathfrak d_j$, with weight given by the intersection multiplicity of $D_i$ and $D_j$ at $p$, so that the sum of the weights of edges joining $d_i$ and $d_j$ is $m_{ij}=D_i\cdot D_j$. 
\end {inparaenum}

In the above setting, an edge of weight $w$  has to be considered as the superposition of $w$ simple edges, hence it contributes $w$ to the valency of the vertices it joins and contributes to the  homology of $\mathfrak G(D)$. 

If the irreducible components of $D$ are all smooth and rational, one may omit the second component of the vector weight of the vertices. We will sometimes denote by the same symbol $\mathfrak G(D)$ the unweighted graph. 

A connected curve $D$ is said to be a \emph{tree} if $\mathfrak G(D)$ is a tree, i.e., all edges have weight 1 and $h^ 1(\mathfrak G(D),\mathbb C)=0$ (then $D$ is snc).  A component of $D$  corresponding to a vertex of valency 1 of $\mathfrak G(D)$ is called \emph{terminal}. 

 The curve $D$ is \emph{connected} if and only if $\mathfrak G(D)$ is connected, in which case one has $p_a(D)\geqslant 0$.

Note that $\mathfrak G(D)$ does not identify $D$, even if all components of $D$ are smooth. For example, if $D$ consists of three lines in the plane, then $\mathfrak G(D)$ is the complete graph on three vertices, regardless to the fact that the three lines pass or do not pass through the same point. 

\begin{lemma}\label{lem:conn} Let $D$ be an effective, non--zero, reduced, connected divisor with $h$ irreducible components on a smooth surface $S$. Then:\\
\begin {inparaenum}
\item [(i)] if $p_a(D)=0$ then $\mathfrak G(D)$ is a tree, all components of $D$ are smooth and rational and $D$ has $h-1$ nodes and no other singularity  (in particular $D$ is snc);\\
\item [(ii)] if $p_a(D)=1$ then:
\begin{inparaenum}

\item [(a)]  either $D$ has a component of arithmetic genus 1 (which can be either smooth, or rational nodal, or rational cuspidal), all other components are smooth rational and $D$ has (further) $h-1$ nodes and no other singularity;

\item [(b)] or all components of $D$ are smooth and rational and $D$ has  $h$ nodes and no other singularity;

\item [(c)] or all components of $D$ are smooth and rational, $D$ has a tacnode and  $h-2$ nodes and no other singularity; 

\item [(d)] or all components of $D$ are smooth and rational, $D$ has an an ordinary triple point and 
$h-3$ nodes and no other singularity.

\end{inparaenum}
In case (a) the graph $\mathfrak G(D)$ is a tree, in all other cases it has a unique cycle. 
\end{inparaenum}
\end{lemma}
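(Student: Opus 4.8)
The plan is to derive everything from two elementary genus formulas together with a short local analysis of plane–curve singularities with small $\delta$–invariant. By adjunction $2p_a(D)-2=D\cdot(D+K_S)$; writing $D=\sum_{i=1}^h D_i$ and using adjunction on each $D_i$ gives
\[
p_a(D)=\sum_{i=1}^h p_a(D_i)-h+1+\sum_{1\le i<j\le h}D_i\cdot D_j .
\]
Since $D$ is connected, $\mathfrak G(D)$ is a connected graph with $h$ vertices and, with the superposition convention for weighted edges, exactly $\sum_{i<j}D_i\cdot D_j$ simple edges, so $h^1(\mathfrak G(D),\C)=\sum_{i<j}D_i\cdot D_j-h+1$; hence
\[
p_a(D)=\sum_{i=1}^h p_a(D_i)+h^1(\mathfrak G(D),\C),
\]
a sum of two non‑negative integers. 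I will also use the normalization identity $p_a(D)=\sum_i p_g(D_i)+\sum_{P}\delta_P-h+1$ (equivalently $\sum_P\delta_P=\sum_i\delta(D_i)+\sum_{i<j}D_i\cdot D_j$), only to count the singular points of $D$.

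\emph{Local input.} At a singular point $P$ of $D$ one has $\delta_P\ge1$; two distinct smooth components meeting transversally at $P$ give a node ($\delta_P=1$) contributing a weight‑$1$ edge to $\mathfrak G(D)$, whereas any tangency or higher contact there forces $(D_i\cdot D_j)_P\ge2$ via the inequality $(C\cdot C')_P\ge m_P(C)\,m_P(C')$, hence a weight‑$\ge2$ edge, i.e.\ a $2$‑cycle; and $k\ge3$ components through $P$ contribute a complete graph on $k$ vertices, hence $\binom{k-1}{2}$ independent cycles, which is $1$ exactly when $k=3$ and $P$ is an ordinary triple point, and $\ge3$ as soon as $k\ge4$. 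Finally, an irreducible curve on a smooth surface has $p_a=p_g+\delta$: if $p_a=0$ it is smooth rational; if $p_a=1$ then $\delta\le1$, so it is smooth elliptic, or rational with a single node, or rational with a single ordinary cusp, and in the last two cases its singular point has multiplicity $2$, so by $(C\cdot C')_P\ge m_P(C)\,m_P(C')$ any other component through it meets it with multiplicity $\ge2$.

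\emph{Case analysis.} If $p_a(D)=0$, then $\sum_i p_a(D_i)=h^1(\mathfrak G(D),\C)=0$; all $D_i$ are smooth rational and $\mathfrak G(D)$, being connected with $h$ vertices and no cycle, is a tree with all edges of weight $1$, so no two components are tangent, no three concur, no component is singular, and $D$ is snc with exactly $h-1$ nodes: this is (i). If $p_a(D)=1$, then either $\sum_i p_a(D_i)=1$ and $h^1(\mathfrak G(D),\C)=0$, or all $D_i$ are smooth rational and $h^1(\mathfrak G(D),\C)=1$. In the first case exactly one component $D_1$ has $p_a(D_1)=1$ (smooth elliptic, nodal or cuspidal), the others are smooth rational, $\mathfrak G(D)$ is again a tree of weight‑$1$ edges, and by the last local remark the singular point of $D_1$ (if any) cannot lie on another component, as that would create a $2$‑cycle; so $D$ has $h-1$ nodes from the tree plus, disjointly, the (at most one) singularity of $D_1$: this is (a). In the second case $\sum_P\delta_P=\sum_{i<j}D_i\cdot D_j=h$ and $\mathfrak G(D)$ is unicyclic, so the unique cycle is realized in exactly one of three mutually exclusive ways: as a polygon of ordinary nodes (two components meeting at two points, or a cyclic chain of components), giving $h$ nodes — case (b); as the $2$‑cycle of a single tacnode, giving $h-2$ nodes and one tacnode — case (c); or as the triangle of a single ordinary triple point, giving $h-3$ nodes and one triple point — case (d). Any tangency, any triple point with a tangency, any point on $\ge4$ components, or two such configurations at once would force $h^1(\mathfrak G(D),\C)\ge2$ and is excluded; the assertions on $\mathfrak G(D)$ then follow at once.

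\emph{Main obstacle.} The delicate step is the last one: enumerating exactly which singularity types are compatible with $h^1(\mathfrak G(D),\C)\le1$ and pinning down the footprint of each in $\mathfrak G(D)$, so that configurations (b), (c), (d) emerge precisely as stated and are shown to be exhaustive. The inputs $\delta_P\ge1$, $(C\cdot C')_P\ge m_P(C)\,m_P(C')$, the classification of branches and points with $\delta\le1$, and the identification of ordinary multiple points as the $\delta$‑minimal points with a prescribed number of branches are exactly what turns this into bookkeeping rather than a subtle argument.
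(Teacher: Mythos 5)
Your argument is correct, but it follows a genuinely different route from the paper. The paper proves the lemma by induction on the number $h$ of components: it blows up a singular point $P$ of multiplicity $m$, uses the genus-drop formula $p_a(D')=p_a(D)-\tfrac{m(m-1)}{2}$ for the proper transform together with the number of connected components of $D'$, and then runs a case analysis (node versus non-node, $D'$ connected or not), invoking part (i) and the inductive hypothesis. You instead work globally and without induction: the identity $p_a(D)=\sum_i p_a(D_i)+h^1(\mathfrak G(D),\C)$ (valid with the paper's superposition convention for weighted edges) splits the arithmetic genus into a sum of two non-negative contributions, and the local inequalities $(C\cdot C')_P\ge m_P(C)\,m_P(C')$, the $\delta\le 1$ classification, and the count $\binom{k-1}{2}$ of independent cycles created by $k$ concurrent components then pin down exactly which singularity configurations are compatible with $h^1\le 1$; since local cycle contributions at distinct points are supported on disjoint edge sets, they add up as a lower bound for $h^1$, which is what makes your exclusion of combined configurations legitimate. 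Your approach buys transparency (it shows where the genus ``lives'': in component genera versus in cycles of the dual graph) and would extend with no extra effort to larger values of $p_a$, whereas the paper's blow-up induction stays closer to the techniques used in the rest of the paper (proper transforms and connectivity under blow-up) and needs no discussion of the dual graph at all. One small wording slip: in your final exclusion list, ``any tangency \dots would force $h^1\ge2$'' should read ``any tangency of contact order $\ge3$, or any tangency in addition to the configurations already listed'', since a single simple tangency is precisely the tacnode of case (c) that you have just allowed; the surrounding argument makes the intended meaning clear, so this is cosmetic rather than a gap.
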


\begin{proof} The proof is standard, so we only sketch it in case (ii), giving for granted case (i), which can be proved similarly. 

The assertion is clear if $h=1$: in this case (a) occurs. So we assume $h>1$ and proceed by induction on $h$. Since $D$ is connected, there are singular points on $D$. 
Let $P$ be one of them, and let $m$ be its multiplicity. Let $\pi: S'\to S$ be the blow--up of $P$ with exceptional divisor $E$, and take the proper transform $D'$ of $D$. One has 
\begin{equation}\label{eq:p}
p_a(D')=p_a(D)-\frac {m(m-1)}2=1-\frac {m(m-1)}2.
\end{equation}

Assume first  $P$ is not a node. If $D'$ is connected, then $p_a(D')\geqslant 0$, hence $m=2$, $p_a(D')=0$, and we can apply part (i) to $D'$. Since $m=2$, then $E\cdot D'=2$. Since $P$ is not a node, then $E$ intersects $D'$ at only one point $Q$ with intersection multiplicity 2. If  $Q$ is a smooth point of $D'$, then $D$ has a cusp, and we are in case (a). If $Q$ is a node of $D'$, then $D$ has a tacnode, and we are in case (c).  If $D'$ is not connected,  let $k$ be the number of its connected components. One has $k\leqslant m$, hence $p_a(D')\geqslant -k+1\geqslant -m+1$. By \eqref {eq:p}, one has $m\leqslant 3$. The case $m=2$ is not possible, because we assumed $D'$ not connected and $P$ not  a node.  Therefore $m=k=3$,  $P$ is an ordinary triple point and, by applying to the three connected components of $D'$ part (i), we see we are in case (d).

Suppose now $P$ is a node, hence $m=2$ and $p_a(D')=0$. If $D'$ is connected then, applying part (i), we see we are in cases (a) or (b). If $D'$ is not connected, then, since $D$ is connected, $D'$ consists of two connected components $D'_1, D'_2$. Then $D=D_1+D_2$, with $D_i=\pi_*(D'_i)$, with $1\leqslant i\leqslant 2$, and $D_1$ and $D_2$ intersect transversally at  $P$. Since $1=p_a(D)=p_a(D_1)
+p_a(D_2)$, and $p_a(D_1),p_a(D_2)$ are both non--negative, we may assume that $p_a(D_1)=1,p_a(D_2)=0$. Then we conclude by applying part (i) for $D_2$ and induction for $D_1$. \end{proof}

\begin{corollary}\label{lem:log} Let $(S,D)$ be a pair with $S$ a smooth, regular  surface, and $D$ an effective, reduced, non--zero divisor, such that $|K_S+D|=\emptyset$ (which is the case if ${\rm kod}(S,D)=-\infty$). Then $(S,D)$ is log smooth. 
\end{corollary}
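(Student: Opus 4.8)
The plan is to reduce at once to the case of a connected $D$, and then to show that such a $D$ must satisfy $p_a(D)=0$, so that Lemma~\ref{lem:conn}(i) delivers the conclusion directly.

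First I would dispose of the parenthetical remark: if $\kod(S,D)=-\infty$ then $P_1(S,D)=h^0(S,\mathcal O_S(K_S+D))=0$, so $|K_S+D|=\emptyset$; hence it suffices to work under the latter hypothesis. Next, writing $D=\sum_i D^{(i)}$ for the decomposition of $D$ into connected components, each $D-D^{(i)}$ is effective, so $h^0(S,\mathcal O_S(K_S+D^{(i)}))\le h^0(S,\mathcal O_S(K_S+D))=0$ and therefore $|K_S+D^{(i)}|=\emptyset$ for every $i$. Since the $D^{(i)}$ are pairwise disjoint, $D$ is log smooth as soon as each $D^{(i)}$ is; so from now on I would assume $D$ connected.

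For connected $D$ the argument is pure adjunction. Tensoring the structure sequence of $D$ by $\mathcal O_S(K_S+D)$ gives
\[
0\to \mathcal O_S(K_S)\to \mathcal O_S(K_S+D)\to \omega_D\to 0,
\]
where $\omega_D\cong\mathcal O_D(K_S+D)$ is the dualizing sheaf of $D$ — a line bundle, since $D$ is a Cartier divisor on the smooth surface $S$, hence Gorenstein. As $D$ is connected and projective, Serre duality on $D$ yields $h^0(D,\omega_D)=h^1(D,\mathcal O_D)=p_a(D)$ (and $p_a(D)\ge 0$, as recalled before Lemma~\ref{lem:conn}). Passing to cohomology and using the hypothesis $H^0(S,\mathcal O_S(K_S+D))=0$, one gets an injection $H^0(D,\omega_D)\hookrightarrow H^1(S,\mathcal O_S(K_S))$. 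But $H^1(S,\mathcal O_S(K_S))\cong H^1(S,\mathcal O_S)^{\vee}=0$ by Serre duality on $S$ together with the regularity of $S$. Hence $h^0(D,\omega_D)=0$, i.e.\ $p_a(D)=0$.

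It then remains only to invoke Lemma~\ref{lem:conn}(i): a connected, reduced, non-zero divisor with $p_a=0$ has $\mathfrak G(D)$ a tree, all its components smooth and rational, and only nodes as singularities; in particular it is snc. Applying this to each connected component of the original $D$ shows that $(S,D)$ is log smooth. I do not expect a genuine obstacle here — it is all standard surface theory — but the two points to state carefully are the identification $h^0(D,\omega_D)=p_a(D)$ and the vanishing $H^1(S,\mathcal O_S(K_S))=0$: the first rests on Serre duality (and Gorensteinness) for the possibly singular curve $D$, and the second on Serre duality on $S$ together with the regularity hypothesis, which is exactly where that assumption is used.
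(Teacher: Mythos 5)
Your proof is correct and follows essentially the same route as the paper: the paper's own (one-line) proof asserts that the hypotheses force $p_a=0$ for every connected component of $D$ and then invokes Lemma~\ref{lem:conn}(i), and your adjunction-sequence/Serre-duality argument (using regularity of $S$ to kill $H^1(S,\mathcal O_S(K_S))$, after the standard reduction to connected $D$) is exactly the implicit justification of that assertion.
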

\begin{proof} The hypotheses yield that each connected component of $D$ has $p_a(D)=0$. The assertion follows from Lemma \ref {lem:conn}.  \end{proof}

The following lemmata will be useful:

\begin{lemma}\label{lem:numeric} Let $(S,D)$ be a pair with $S$ a smooth, rational surface, $D=D_1+\cdots+D_h$ an effective, reduced divisor, where $D_1,\ldots,D_h$ are the connected components of $D$. Assume that $p_a(D_i)=0$ for $1\leqslant i\leqslant h$. Then\\
\begin{inparaenum}[(i)]
\item $D\cdot (D+K)=-2h$;\\
\item $h^ 0(S,\cO_S(2K+D))+h^ 0(S,\cO_S(-K-D))\geqslant K\cdot (K+D)-h+1=(D+K)^2+h+1$.
\end{inparaenum}
\end{lemma}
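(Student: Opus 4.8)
First I would prove (i), which is purely numerical. Since each connected component $D_i$ has $p_a(D_i)=0$, the adjunction formula gives $D_i\cdot(D_i+K)=2p_a(D_i)-2=-2$. Because the $D_i$ are the connected components of $D$, they are pairwise disjoint, so $D\cdot(D+K)=\sum_{i=1}^h D_i\cdot(D_i+K)=-2h$. Then (i) immediately rewrites as $D^2+K\cdot D=-2h$, i.e.\ $K\cdot(K+D)=K^2+K\cdot D=K^2+(-2h-D^2)$, and also $(D+K)^2=D^2+2K\cdot D+K^2=(D^2+K\cdot D)+(K\cdot D+K^2)=-2h+K\cdot(K+D)$; this is exactly the identity $K\cdot(K+D)-h+1=(D+K)^2+h+1$ appearing in (ii), so the two expressions on the right-hand side of (ii) agree, and this identity needs no further input beyond (i).

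The substance is the inequality in (ii). The natural tool is Riemann–Roch on the surface $S$ together with rationality. Riemann–Roch for the divisor $L:=2K+D$ reads
\[
h^0(S,\cO_S(L))-h^1(S,\cO_S(L))+h^2(S,\cO_S(L))=\chi(\cO_S)+\tfrac12\,L\cdot(L-K).
\]
Since $S$ is rational, $\chi(\cO_S)=1$ and $q(S)=p_g(S)=0$, so $h^2(S,\cO_S)=0$ as well. By Serre duality $h^2(S,\cO_S(2K+D))=h^0(S,\cO_S(K-(2K+D)))=h^0(S,\cO_S(-K-D))$. Dropping the (non-negative) term $h^1(S,\cO_S(2K+D))$ gives
\[
h^0(S,\cO_S(2K+D))+h^0(S,\cO_S(-K-D))\ \ge\ 1+\tfrac12\,(2K+D)\cdot(K+D).
\]
It remains to check that $\tfrac12\,(2K+D)\cdot(K+D)+1$ equals $K\cdot(K+D)-h+1$, equivalently that $\tfrac12\,(2K+D)\cdot(K+D)=K\cdot(K+D)-h$. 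Expanding, $\tfrac12\,(2K+D)\cdot(K+D)=K\cdot(K+D)+\tfrac12 D\cdot(K+D)=K\cdot(K+D)+\tfrac12(-2h)=K\cdot(K+D)-h$ by part (i). This closes the argument.

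The only place where something could go wrong is the vanishing $h^2(S,\cO_S)=0$ and $\chi(\cO_S)=1$, but these are standard consequences of $S$ being rational (Castelnuovo), so I expect no real obstacle; the proof is essentially a bookkeeping exercise combining Riemann–Roch, Serre duality, and the adjunction identity (i). One should only take a little care that $h^1(S,\cO_S(2K+D))\ge 0$ is used merely as an inequality, so no vanishing theorem is needed there.
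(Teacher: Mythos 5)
Your proof is correct and follows exactly the paper's route: part (i) is the adjunction formula applied to the pairwise disjoint connected components, and part (ii) is Riemann--Roch for $2K+D$ on the rational surface $S$ (with Serre duality and $h^1\geqslant 0$) combined with the numerical identity from (i). No issues.
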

\begin{proof} Part (i) is adjunction formula and part (ii) is Riemann--Roch theorem plus part (i).
\end{proof}

Let $(S,D)$ and $(\bar S,\bar D)$ be pairs. If there is a birational morphism $\phi: \bar S\to S$ such that  $\phi_*(\bar D)=D$, we write $(S,D)\leqslant (\bar S,\bar D)$. 

\begin{lemma}\label{lem:mor} Let $(S,D)$ and $(\bar S,\bar D)$ be pairs with $D$ and $\bar D$ effective and reduced. If $(S,D)\leqslant (\bar S,\bar D)$, then  $\kod(S,D)\geqslant \kod(\bar S,\bar D)$, i.e., $\kod(S,D)$ is a decreasing function. In particular, if $\kod(S,D)=-\infty$ then also $\kod(\bar S,\bar D)=-\infty$.
\end{lemma}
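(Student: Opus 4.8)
The plan is to reduce to the case where $\phi\colon \bar S\to S$ is a single blow-up, since any birational morphism between smooth surfaces factors as a finite composition of blow-ups (as recalled in \S\ref{ssec:infnear}), and $\kod$ being decreasing along each blow-up will give the general statement by composing. So assume $\phi\colon \bar S\to S$ is the blow-up of a single point $P\in S$, with exceptional divisor $E$, and $\bar D=\phi_*^{-1}(D)$ is the proper transform of $D$; note $\phi_*(\bar D)=D$ is exactly the hypothesis, and $E\not\le \bar D$. Write $m\ge 0$ for the multiplicity of $D$ at $P$, so that $\phi^*D=\bar D+mE$ and $K_{\bar S}=\phi^*K_S+E$.

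The key computation is to compare the linear systems $|n(K_{\bar S}+\bar D)|$ on $\bar S$ with $|n(K_S+D)|$ on $S$ for each $n\ge 1$. One has
\[
n(K_{\bar S}+\bar D)=n\bigl(\phi^*K_S+E+\phi^*D-mE\bigr)=\phi^*\bigl(n(K_S+D)\bigr)+n(1-m)E.
\]
First I would treat the case $m\le 1$. Then $n(1-m)\ge 0$, so $n(K_{\bar S}+\bar D)=\phi^*(n(K_S+D))+n(1-m)E$ is $\phi^*$ of the system on $S$ plus an effective exceptional divisor; pushing forward and using the projection formula ($\phi_*\mathcal O_{\bar S}=\mathcal O_S$, $R^0\phi_*$ of $\mathcal O(kE)$ is $\mathcal O_S$ for $k\ge 0$) gives $H^0(\bar S,\mathcal O(n(K_{\bar S}+\bar D)))\cong H^0(S,\mathcal O(n(K_S+D)))$, so $P_n(\bar S,\bar D)=P_n(S,D)$ for all $n$, whence equality of the images of the pluri-log-canonical maps and $\kod(\bar S,\bar D)=\kod(S,D)$; in particular $\ge$ holds. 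For $m\ge 2$ the situation is the familiar one for Kodaira dimension: the term $n(1-m)E$ is negative, so a section $s\in H^0(\bar S,\mathcal O(n(K_{\bar S}+\bar D)))$ must vanish along $E$ to order at least $n(m-1)$, hence $s=\phi^*(t)$ for some $t\in H^0(S,\mathcal O(n(K_S+D)))$ after absorbing the pole; more precisely $H^0(\bar S,\mathcal O(n(K_{\bar S}+\bar D)))$ injects into $H^0(S,\mathcal O(n(K_S+D)))$, which already gives $P_n(\bar S,\bar D)\le P_n(S,D)$. For the statement about images (not just about $P_n$) one observes that if $|n(K_{\bar S}+\bar D)|\ne\emptyset$ then its moving part is pulled back from $S$, so the rational map it defines factors through $\phi$ and its image has dimension at most that of $\phi_{|n(K_S+D)|}$; taking the max over $n$ gives $\kod(\bar S,\bar D)\le\kod(S,D)$.

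Finally I would assemble the induction: if $\phi=\phi_1\circ\cdots\circ\phi_r$ is the factorization into blow-ups $\phi_i\colon S_i\to S_{i-1}$ (with $S_0=S$, $S_r=\bar S$), let $D_i$ be the successive proper transforms, so $D_0=D$, $D_r=\bar D$, and $(\phi_i)_*(D_i)=D_{i-1}$; the one-blow-up case gives $\kod(S_{i-1},D_{i-1})\ge\kod(S_i,D_i)$, and chaining these inequalities yields $\kod(S,D)\ge\kod(\bar S,\bar D)$. The ``in particular'' clause is immediate since $-\infty$ is the minimum value. The only mild subtlety—hardly an obstacle—is keeping the bookkeeping of exceptional curves straight when the blown-up point in a later step is infinitely near one in an earlier step, but this is automatically handled because we always work with proper transforms and the formula $\phi_i^*K_{S_{i-1}}=K_{S_i}-E_i$ together with $\phi_i^*D_{i-1}=D_i+m_iE_i$ is valid step by step regardless of where $P_i$ lies.
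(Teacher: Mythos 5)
Your proposal is correct and takes essentially the same route as the paper's own proof: reduce to a single blow-up, write $n(K_{\bar S}+\bar D)=\phi^*\bigl(n(K_S+D)\bigr)+n(1-m)E$, and conclude equality of log plurigenera when $m\le 1$ (the paper's case $a\le 1$, cf.\ Remark \ref{rem:mor}) and an injection of sections, hence the inequality of Kodaira dimensions, when $m\ge 2$. The one tacit simplification you make --- asserting that $\phi_*(\bar D)=D$ forces $E\not\le\bar D$, i.e.\ that $\bar D$ is the proper transform --- is also made implicitly in the paper, and the case $E\le\bar D$ is handled by the identical computation with the coefficient of $E$ shifted by one, so nothing essential is lost.
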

\begin{proof} It suffices to prove the assertion for the blow--up $\phi: \bar S\to S$ at a single point $P$, with exceptional divisor $E$. Then $\bar D=\phi^*(D)-aE$, with $a\geqslant 0$, hence
$\bar D+K_{\bar S}\equiv \phi^ *(D+K_S)-(a-1)E$. So the assertion is clear if $a\geqslant 1$. 

Assume $a=0$.
Then $m(\bar D+K_{\bar S})\equiv \phi^*(m(D+K_S))+mE$ and we claim that in this case $P_m(S,D)=P_m(\bar S,\bar D)$ for all positive integers $m$, hence $\kod(S,D)= \kod(\bar S,\bar D)$. Indeed, if $m(\bar D+K_{\bar S})$ is not effective, then also $\phi^*(m(D+K_S))$ is not effective, hence $P_m(S,D)=P_m(\bar S,\bar D)=0$. If $m(\bar D+K_{\bar S})$ is effective, then
$m(\bar D+K_{\bar S})\cdot E=-m$, hence $mE$ is a fixed component of $|m(\bar D+K_{\bar S})|$, and again $P_m(S,D)=P_m(\bar S,\bar D)$.
\end{proof}

\begin{remark} \label{rem:mor} The proof of Lemma \ref {lem:mor}  shows that if $0\leqslant a\leqslant 1$, then 
$P_m(S,D)=P_m(\bar S,\bar D)$ for all positive integers $m$, hence $\kod(S,D)= \kod(\bar S,\bar D)$. 

In particular, in  the setting of the proof of   Lemma \ref {lem:mor}, if $D$ has a double point
and $\bar D=\phi^ *(D)-E$, then $\bar D$ contains $E$, has the same number of connected components of $D$, and $P_m(S,D)=P_m(\bar S,\bar D)$ for all positive integers $m$. In particular $\kod(S,D)=-\infty$ if and only if  $\kod(\bar S,\bar D)=-\infty$. 
\end{remark}

Given $(S,D)$ as in Lemma \ref {lem:mor}, we set
\begin{equation}\label{eq:kodbar}
\overline {\kod}(S,D)=\min\{ \kod(\bar S,\bar D): (\bar S,\bar D)\geqslant (S,D)\, \}.
\end{equation}
By Remark \ref {rem:mor}, the minimum in \eqref {eq:kodbar} is reached once we resolve the singularities of $D$.

\subsection{Contractible triples} Let $(S,D,\mathfrak K)$ be a marked triple. One says that $(S,D,\mathfrak K)$ is \emph{contractible} if there is a birational map $f: S\dasharrow S'$  such that $f_*(D,\mathfrak K)$ has zero divisorial part. Similarly, one defines the concept of a \emph{contractible pair} $(S,D)$.  

\begin{example}\label{ex:line} A triple $(\mathbb P^ 2, L, \mathfrak K)$, where $L$ is a line and $\mathfrak K$ is any cluster, is contractible, via a quadratic transformation based at two general points of $D$.

Similarly, a triple $(\mathbb P^ 2, D, \mathfrak K)$, where $D$ is a reduced conic and $\mathfrak K$ is any cluster, is contractible. If $D$ is irreducible, we can reduce to the line case with a quadratic transformation based at three general points of $D$. If $D$ is reducible, we also reduce to the line case, by applying a quadratic transformation based at two general points of one component of $D$ and at another general point of the other component.
\end{example}

\begin{remark}\label{rem:bir}
Let $(S,D)$ and $(S',D')$ be pairs. Suppose there is a birational map $\phi: S\dasharrow S'$ such that $\phi$ [resp.\ $\phi^ {-1}$]  does not contract any irreducible component of $D$ [resp.\ of $D'$] and $\phi_*(D)=D'$ (hence 
$\phi^ {-1}_*(D')=D$). Then $(S,D)$ is contractible if and only if  $(S',D')$ is. Given this, there is no restriction, in the contractibility problem, to assume $D$ to be snc or even  smooth. 
\end{remark}

\begin{lemma}\label{lem:ob} If $(S,D)$ is contractible, then $\overkod (S,D)=\kod(S)$. Moreover, all irreducible components of $D$ have geometric genus 0. 
\end{lemma}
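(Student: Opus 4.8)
The plan is to prove the two inequalities $\overkod(S,D)\ge\kod(S)$ and $\overkod(S,D)\le\kod(S)$ separately; the first holds for every pair, and it is the second where contractibility is used. I record first an elementary observation to be used repeatedly: if $\Gamma\ge 0$ on a smooth surface $X$, then adding $m\Gamma$ as a fixed part gives a linear embedding $|m(K_X+\Delta)|\hookrightarrow|m(K_X+\Delta+\Gamma)|$ for every $m\ge1$, so $P_m(X,\Delta+\Gamma)\ge P_m(X,\Delta)$ and the image of $\phi_{|m(K_X+\Delta+\Gamma)|}$ has dimension at least that of $\phi_{|m(K_X+\Delta)|}$; hence $\kod(X,\Delta+\Gamma)\ge\kod(X,\Delta)$. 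Taking $\Delta=0$: for any $(\bar S,\bar D)\ge(S,D)$ with birational morphism $\alpha\colon\bar S\to S$, one gets $\kod(\bar S,\bar D)\ge\kod(\bar S,0)=\kod(\bar S)=\kod(S)$ by birational invariance of the Kodaira dimension of a smooth surface; taking the minimum over all such models yields $\overkod(S,D)\ge\kod(S)$.

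For the upper bound, let $\phi\colon S\dasharrow S'$ be a birational map with $S'$ smooth contracting $D$, and resolve it by a smooth surface $\bar S$ with birational morphisms $\alpha\colon\bar S\to S$ and $\beta\colon\bar S\to S'$, both compositions of blow--ups of points. Let $\bar D=\alpha^{-1}_*(D)$ be the proper transform of $D$; then $\alpha_*(\bar D)=D$, so $(\bar S,\bar D)\ge(S,D)$ and $\overkod(S,D)\le\kod(\bar S,\bar D)$. Since $\phi_*(D)=0$, the curve $\bar D$ is contracted to finitely many points by $\beta$, hence $\bar D\le\mathcal E$, where $\mathcal E$ is the reduced total exceptional divisor of $\beta$; by the observation above, $\kod(\bar S,\bar D)\le\kod(\bar S,\mathcal E)$. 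It therefore suffices to show that for any birational morphism $\beta\colon X\to Y$ of smooth projective surfaces with reduced total exceptional divisor $\mathcal E$ one has $P_m(X,\mathcal E)=P_m(Y)$ for all $m\ge1$, with the corresponding maps having the same image, and hence $\kod(X,\mathcal E)=\kod(Y)$.

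This last claim I would prove by induction on the number of blow--ups composing $\beta$, splitting off the last one $\beta_k\colon X\to X'$ at a point $Q$ with exceptional curve $E_k$ (the base case being a single blow--up, where $\mathcal E=E$ and $K_X+\mathcal E=\beta^*K_Y+2E$). Writing $\mathcal E'$ for the reduced total exceptional divisor of $X'\to Y$ and $e=\operatorname{mult}_Q(\mathcal E')$, one has $K_X=\beta_k^*K_{X'}+E_k$ and $\mathcal E=\beta_k^*\mathcal E'+(1-e)E_k$, whence $m(K_X+\mathcal E)=\beta_k^*\bigl(m(K_{X'}+\mathcal E')\bigr)+m(2-e)E_k$. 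Exceptional divisors of sequences of blow--ups of points on a smooth surface are snc, so $e\le2$ and the surplus term $m(2-e)E_k$ is effective and $\beta_k$--exceptional; since $\beta_{k*}\mathcal O_X(nE_k)=\mathcal O_{X'}$ for $n\ge0$, the projection formula gives $H^0(X,m(K_X+\mathcal E))\cong H^0(X',m(K_{X'}+\mathcal E'))$ and that $\phi_{|m(K_X+\mathcal E)|}$ and $\phi_{|m(K_{X'}+\mathcal E')|}$ have the same image, closing the induction. Applying this with $X=\bar S$, $Y=S'$ and again invoking birational invariance, $\overkod(S,D)\le\kod(\bar S,\mathcal E)=\kod(S')=\kod(S)$; combined with the lower bound, $\overkod(S,D)=\kod(S)$.

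Finally, every component of $\mathcal E$ is a smooth rational curve --- exceptional curves of blow--ups are $\PP^1$'s and their proper transforms under further blow--ups of points remain smooth and rational --- hence so is every component of $\bar D\le\mathcal E$. Since $\alpha$ contracts no component of $\bar D$, each component $D_i$ of $D$ is the surjective image under $\alpha$ of some component $\bar D_j\cong\PP^1$ of $\bar D$; lifting to normalizations, the normalization of $D_i$ is dominated by $\PP^1$ and hence is $\PP^1$, so $D_i$ has geometric genus $0$. The main (though mild) obstacle is the identity $\kod(X,\mathcal E)=\kod(Y)$, and inside it the point that $e\le2$, which is exactly what makes the extra exceptional multiplicity in $K_X+\mathcal E$ non--negative so that the pushforward computation goes through.
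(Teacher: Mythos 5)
Your proof is correct, and it reaches the statement by a genuinely more self-contained route than the paper. The skeleton is the same (resolve the contraction, compare log Kodaira dimensions upstairs and downstairs, use rationality of exceptional curves for the genus claim), but for the key upper bound $\kod(\bar S,\bar D)\leqslant\kod(S)$ the paper simply applies its Lemma \ref{lem:mor} (together with Remark \ref{rem:mor}) to the morphism $\beta$ with $\beta_*(\bar D)=0$: since $(S',0)\leqslant(\bar S,\bar D)$, monotonicity gives $\kod(\bar S,\bar D)\leqslant\kod(S',0)=\kod(S)$ in one line, with no need to enlarge $\bar D$. You instead dominate $\bar D$ by the reduced total exceptional divisor $\mathcal E$ of $\beta$ and prove from scratch the invariance $\kod(\bar S,\mathcal E)=\kod(S')$ by peeling off one blow-up at a time, where the decisive point is that the exceptional configuration is snc so $e=\operatorname{mult}_Q(\mathcal E')\leqslant 2$ and the correction term $m(2-e)E_k$ is effective and exceptional; this is essentially the same one-blow-up computation hidden in the proof of Lemma \ref{lem:mor} ($\bar D=\phi^*(D)-aE$ with $a\geqslant 0$), redone for the pair $(\bar S,\mathcal E)$. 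What your version buys is independence from the earlier lemmas and the slightly stronger statement $P_m(\bar S,\mathcal E)=P_m(S')$ for all $m$; what the paper's version buys is brevity, since the needed monotonicity was already established. You also make explicit the easy lower bound $\overkod(S,D)\geqslant\kod(S)$ (monotonicity in the boundary plus birational invariance of $\kod$), which the paper leaves implicit, and your genus argument is fine --- indeed it can be shortened by noting that $\alpha$ maps each component of the proper transform $\bar D$ birationally onto the corresponding component of $D$, so the components of $D$ are images of smooth rational curves and have geometric genus $0$.
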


\begin{proof} If $(S,D)$ is contractible, there is a commutative diagram
\[
\xymatrix{
& \bar S \ar[rd]^\beta \ar[ld]_\alpha \\
S \ar@{-->}[rr]_\phi & & S'
}
\]
where $\alpha$ and $\beta$ are birational morphisms. If 
$\bar D$ is the proper transform of $D$ via $\alpha$, then $\bar D$ is contracted to a union of points by $\beta$, thus, by Lemma \ref {rem:mor}, the assertion follows.  \end{proof}

The following is obvious. 

\begin{lemma}\label{lem:sup} Let  $(S,D,\mathfrak K)$ and $(S',D',\mathfrak K')$ be triples, let $f: S\dasharrow S'$ be a birational map, such that $f_*(D,\mathfrak K)\leqslant (D',\mathfrak K')$ and assume that  $(S',D',\mathfrak K')$ is contractible. Then $(S,D,\mathfrak K)$ is contractible.  \end{lemma}

\begin{prop}\label{lem:DJ} Let $(\FF_n,D,\mathfrak K)$ be a marked triple such that
$D=\varepsilon E+D'$, with $0\leqslant \epsilon\leqslant 1$, and  $D'\in |E+dF|$. Then $(\FF_n,D,\mathfrak K)$ is contractible.
\end{prop}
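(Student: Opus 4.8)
The plan is to reduce the statement to Example~\ref{ex:line} (contractibility of a line in $\PP^2$ with an arbitrary cluster) by means of a De~Jonqui\`eres transformation that simultaneously lowers the self--intersection of the section component and ``absorbs'' the marking $\mathfrak K$. The key technical input is Example~\ref{ex:nb}: a birational map $\phi_{\mathcal L}$ does not blow--up a simple cluster $\mathfrak K$ with support $P$ provided $P$ and its infinitely near points along a fixed smooth curve $C$, up to an order $m$ strictly larger than the order of $\mathfrak K$, are base points of $\mathcal L$. So the whole game is to manufacture, on $\FF_n$, a homaloidal net whose base locus along a suitable section contains enough infinitely near points to kill $\mathfrak K$.

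First I would dispose of the trivial directions. If $D' \in |E+dF|$ with $d < n$ then $D'$ must contain $E$ as a component (since $|E+dF|$ is empty otherwise, or more precisely its only member contains $E$); absorbing $E$ into the $\varepsilon E$ term reduces $d$, so after finitely many steps we may assume $d \ge n$, in which case, by Example~\ref{ex:DJ}, $|E+dF|$ is base point free and its general member $C$ is smooth, irreducible and rational. Next, since $0\le\varepsilon\le 1$, Remark~\ref{rem:mor} (the $0\le a\le 1$ case) lets us freely blow up and down without changing whether the triple is contractible; in particular, using Lemma~\ref{lem:sup} it suffices to contract a triple $(\FF_n, C, \mathfrak K')$ where $C\in|E+dF|$ is a general smooth rational curve and $\mathfrak K'$ is the image of $(D,\mathfrak K)$, a cluster on $\FF_n$ of some finite order, say $< M$. (One must check that $E$, the extra component when $\varepsilon=1$, gets contracted along the way; this is automatic once we track $E$ through the elementary transformations below, since $E$ becomes a fibre component or is blown down.)

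Now the main step. I would choose $d$ large: replace $D'\in|E+dF|$ by a linearly equivalent general smooth curve after performing elementary transformations (Example~\ref{ex:eltr}) at general points of the fibres, which do not change $|E+dF|$-type systems in an essential way, until $d$ is as large as we please — in particular $d \ge n$ and $2d-n-1 \ge M$. Following Example~\ref{ex:DJ}, fix $C\in|E+dF|$ smooth and let $\mathfrak K'$ have support at points $P_1,\dots,P_r$. I would take an effective divisor $D_0$ of degree $2d-n-1$ on $C$ concentrated so that it contains, for each $j$, the point $P_j$ together with its first $\approx M$ infinitely near points \emph{along $C$} (this is possible precisely because $2d-n-1$ can be taken $\ge r(M+1)$, enlarging $d$ if necessary, and $C$ is a fixed smooth curve through each $P_j$ — or through the proper transform of $P_j$, handling infinitely near supports by the same enlargement). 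Let $\mathcal L\subseteq|E+dF|$ be the sub-net of curves cutting a divisor $\ge D_0$ on $C$; by Example~\ref{ex:DJ} this $\mathcal L$ is a homaloidal net and $\phi_{\mathcal L}\colon\FF_n\dasharrow\PP^2$ is birational, sending $C$ (which meets the general member of $\mathcal L$ in $2d-n$ points, one of them off $D_0$) to a line $L$. By Example~\ref{ex:nb}(iii)--(iv), since at each $P_j$ the net $\mathcal L$ has base points along $C$ up to order $> M > \mathrm{ord}(\mathfrak K')$, the map $\phi_{\mathcal L}$ does not blow up $\mathfrak K'$, i.e.\ $\mathrm{div}_{\phi_{\mathcal L}}(\mathfrak K') = 0$. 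Hence $(\phi_{\mathcal L})_*(C,\mathfrak K')$ has divisorial part exactly $L$, so $\phi_{\mathcal L}$ maps $(\FF_n, C, \mathfrak K')$ to $(\PP^2, L, \mathfrak K'')$ for some cluster $\mathfrak K''$. By Example~\ref{ex:line}, $(\PP^2,L,\mathfrak K'')$ is contractible, so by Lemma~\ref{lem:sup} (applied to the composite birational map) $(\FF_n,D,\mathfrak K)$ is contractible as well.

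The main obstacle I anticipate is \emph{bookkeeping of the cluster through infinitely near supports}: if $\mathfrak K$ itself involves points infinitely near to points of $E$ or of fibres, one cannot literally say ``$C$ passes through $P_j$'' — one has to pass to the blow-up $S_{\mathfrak K}$, use Remark~\ref{rem:aa} and the functoriality of $\mathrm{div}_f$ and $\mathrm{cl}_f$ from \S\ref{ssec:cluster}, and argue that enlarging $d$ (via elementary transformations based away from $\mathrm{Supp}(\mathfrak K)$) still produces, in $|E+dF|$, curves whose proper transforms on $S_{\mathfrak K}$ are smooth and can be prescribed to contain the whole cluster $E_{\mathfrak K}$. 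This is exactly the kind of ``create enough base points along a fixed curve'' argument sketched in Example~\ref{ex:nb}, so morally it is already in hand; the care is in making the degree count $2d-n-1 \ge (\text{weighted length of }\mathfrak K) + (\text{length forced by }D)$ uniform and in checking that the curve $C$ can be chosen smooth at \emph{all} the required (possibly infinitely near) points — again guaranteed by taking $d\gg0$ and invoking the base-point-freeness in Example~\ref{ex:DJ}. A secondary, purely mechanical point is verifying that the $\varepsilon E$ summand, when $\varepsilon = 1$, is carried to $0$: this follows because elementary transformations and the De~Jonqui\`eres contraction turn $E$ into a $(-1)$-curve or a fibre component that gets blown down, so it never contributes to the final divisorial part.
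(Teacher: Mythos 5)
There are two genuine gaps, and they concern precisely the parts of Proposition \ref{lem:DJ} where the work lies.

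First, the case $\varepsilon=1$ is not actually treated. You dismiss it by saying that $E$ ``becomes a fibre component or is blown down'' along the way, but $E$ is a section of $\FF_n\to\PP^1$: no sequence of elementary transformations based at general points turns a section into a fibre component, and nothing in your construction ever contracts it. In the paper this is exactly the heart of the proof: after elementary transformations one reduces to $\FF_1$ with $D=E+C$, $E^2=-1$, and then one builds a special net $\mathcal L\subset|E+mF|$ with $m>d$, whose base scheme consists of the points $Q_j$ of $E\cap C$ taken with multiplicities $n_j+k_j$ along $C$ together with $m-d$ general points of $E$; this net maps $C$ to a line \emph{and contracts $E$ to a point}. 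Your De Jonqui\`eres net from Example \ref{ex:DJ}, with base divisor $D_0$ chosen on $C$ alone, maps $C$ to a line but leaves the image of $E$ as a curve, so the divisorial part does not become zero.

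Second, the cluster bookkeeping is backwards. You want $D_0$ to contain each support point $P_j$ of $\mathfrak K'$ with many infinitely near points along $C$; this presupposes ${\rm Supp}(\mathfrak K')\subset C$, which is false in general, and your remedy --- replacing $D'$ by ``a linearly equivalent general smooth curve'' through the $P_j$ --- is not available, since $D'$ is the given divisor to be contracted and cannot be moved in its linear system. No such device is needed: by Example \ref{ex:nb}, a support point of $\mathfrak K$ that is \emph{not} a base point of $\mathcal L$ is automatically not blown up, so every base point one is free to choose (a general $D_0$ on $C$ in the $\varepsilon=0$ case, the $m-d$ points on $E$ in the $\varepsilon=1$ case) is simply taken general, away from ${\rm Supp}(\mathfrak K)$; the high--tangency trick of Example \ref{ex:nb} is needed only at the \emph{forced} base points $Q_j=E\cap C$, which may unavoidably lie in ${\rm Supp}(\mathfrak K)$, and there one takes $k_j$ larger than the order of $\mathfrak K$. (Also, Remark \ref{rem:mor} concerns log plurigenera, not contractibility of marked triples, so it cannot be invoked to ``blow up and down freely.'') With the free choices made general rather than through $\mathfrak K'$, your argument essentially reproduces the easy $\varepsilon=0$ half of the paper's proof; the missing half is the net that contracts $E$.
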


\begin{proof} By taking into account Examples \ref {ex:DJ} and \ref {ex:line} and by Lemma \ref {lem:sup} the assertion is clear if $\varepsilon=0$. So we focus on the case $\varepsilon =1$. 

We may write
\[
D'= F_1+\ldots+F_k+C\quad \text{with}\quad k\geqslant 0,
\]
$F_1,\ldots, F_k\in |F|$ distinct and $C\in |E+(d-k)F|$ smooth and irreducible. We can make a series of elementary transformations based at general point of $F_1,\ldots, F_k$ and contract them.  After having done this, the proper transform of $E$ could have non--negative self intersection. However,  we can make another series of elementary transformations either based at general points of the surface or at general points of the proper transform of $E$, so to reduce to the case $k=0$ and $n=1$, where $D=E+C$ with $E^ 2=-1$ and $C\in |E+dF|$ smooth, irreducible. 

Let $n_1Q_1+\ldots+n_hQ_h$ be the degree $d-1$ divisor cut out by $E$ on $C$, with  $Q_1,\ldots, Q_h$ distinct. 
Take  non--negative integers $k_1,\ldots, k_h$, such that  $m:=k_1+\ldots+k_h+1>d$, consider the linear system $|E+mF|$ of dimension $2m$ and its sublinear system $\mathcal L$ consisting of the curves:\\
\begin{inparaenum}
\item [$\bullet$]  cutting out on $C$ a divisor containing $(n_j+k_j)Q_j$, for all $j=1,\ldots, h$;\\
\item [$\bullet$] passing through $m-d$ further general points of $E$.\\
\end{inparaenum}
The total number of base points imposed to $\mathcal L$ is $2m-2$ so that $\dim(\mathcal L)=2$ and $\mathcal L$ is a homaloidal net, hence $\phi_\mathcal L$ birationally maps $\FF_1$ to $\PP^ 2$. It maps $C$ to a line, maps $E$ to a point $P$ (which is not on $C$). By Example \ref {ex:nb}, we see that, by taking $k_1,\ldots, k_h$ sufficiently large, the map 
$\phi_\mathcal L$ does not blow--up the cluster $\mathfrak K$. So we are reduced to the case $(\PP^ 2, L, \mathfrak C)$, where $L$ is a line and $\mathfrak C$ is a suitable cluster, which is contractible by Example \ref {ex:line}. \end{proof}

Recalling Example \ref {ex:DJ}, as an immediate consequence we have:

\begin{corollary}\label{cor:plane} Let $(\PP^ 2,D,\mathfrak K)$ where $D$ is a reduced curve of degree $d$ with a point of multiplicity at least $d-1$.  Then $(\PP^ 2,D,\mathfrak K)$ can be mapped  via a De Jonqui\`eres transformation to a triple $(\PP^ 2,L,\mathfrak C)$, where $L$ is a line, and so  $(\PP^ 2,D,\mathfrak K)$ is contractible. \end{corollary}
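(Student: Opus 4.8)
The statement to prove is Corollary~\ref{cor:plane}: if $D\subset\PP^2$ is a reduced curve of degree $d$ possessing a point $P$ of multiplicity at least $d-1$, then a De Jonqui\`eres transformation carries $(\PP^2,D,\mathfrak K)$ to a triple $(\PP^2,L,\mathfrak C)$ with $L$ a line, hence $(\PP^2,D,\mathfrak K)$ is contractible. The idea is to reduce everything to Proposition~\ref{lem:DJ} by passing to the surface $\FF_1$, where a De Jonqui\`eres transformation becomes the much cleaner map $\phi_{\mathcal L}\colon\FF_1\dasharrow\PP^2$ of Example~\ref{ex:DJ}.

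\begin{proof}
Let $\pi\colon\FF_1\to\PP^2$ be the blow--up of $\PP^2$ at the point $P$ of multiplicity $\ge d-1$ of $D$, with exceptional curve $E$ (so $E^2=-1$), and let $\tilde D=\pi^{-1}_*(D)$ be the proper transform of $D$, which lies in $|dF+(d-1)E|=|E+(d-1)F|$ if $\mathrm{mult}_P(D)=d-1$, or in $|E+dF|$ (with $E\le\tilde D$) once we add back the exceptional divisor if $\mathrm{mult}_P(D)=d$. Concretely, writing $m:=\mathrm{mult}_P(D)\in\{d-1,d\}$, the total transform satisfies $\pi^*D\equiv dF+(d-1)E$ when we note $F=\pi^*(\text{line})-E$; a short computation gives $\pi^*D\equiv dF$ in the line class, i.e. $\pi^*D=\tilde D+mE$, so $\tilde D\equiv dF-mE$. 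If $m=d-1$ this is $\tilde D\equiv dF-(d-1)E=E+(d-1)F\in|E+(d-1)F|$, and if $m=d$ this is $\tilde D\equiv dF-dE$, whence $\tilde D+E\equiv E+dF-dE\cdot 0$; in either case, after possibly adding the $(-1)$--curve $E$ we obtain a divisor of the form $\varepsilon E+D''$ with $0\le\varepsilon\le1$ and $D''\in|E+d'F|$ for some $d'\le d$. (The only point requiring a remark is that $\tilde D$ may fail to be reduced or snc, but by Remark~\ref{rem:bir} we lose nothing in the contractibility problem by replacing $D$ with its image under a birational map not contracting components, so we may and do assume $D$ is snc; equivalently one applies the argument to $D_{\mathrm{red}}$ and notes the hypothesis on the multiplicity is preserved.)

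Now transport the cluster: the birational morphism $\pi$ induces, for the cluster $\mathfrak K$ on $\PP^2$, an image cluster on $\FF_1$; more precisely, by Remark~\ref{rem:bb} morphisms do not blow up any cluster, so applying $\pi^{-1}$ (a birational map $\PP^2\dasharrow\FF_1$) to $(D,\mathfrak K)$ produces a marked triple $(\FF_1,\tilde D+\text{(possibly }E),\mathfrak K_1)$ with $\mathfrak K_1={\rm cl}_{\pi^{-1}}(\mathfrak K)\cup\mathfrak D$, where $\mathfrak D$ accounts for any components of $D$ contracted by $\pi^{-1}$ — but $\pi^{-1}$ contracts nothing of $D$ once $D\ne\{\text{line through }P\}\cup\cdots$, and in the degenerate case that some component of $D$ passes simply through $P$ this is harmless. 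In all cases we arrive at a marked triple $(\FF_1,\varepsilon E+D'',\mathfrak K_1)$ with $0\le\varepsilon\le1$ and $D''\in|E+d'F|$.

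At this point Proposition~\ref{lem:DJ} applies verbatim (with $n=1$): the triple $(\FF_1,\varepsilon E+D'',\mathfrak K_1)$ is contractible, and inspecting its proof shows the contraction is realized by a homaloidal net on $\FF_1$ sending the generic section $C\in|E+d'F|$ to a line and $E$ to a point, i.e. precisely the data of a De Jonqui\`eres transformation $\gamma\colon\PP^2\dasharrow\PP^2$ as in Example~\ref{ex:DJ}: one composes $\pi^{-1}$, the map $\phi_{\mathcal L}$ of that proof, and lands in $(\PP^2,L,\mathfrak C)$ with $L$ a line. By Example~\ref{ex:line}, $(\PP^2,L,\mathfrak C)$ is contractible for every cluster $\mathfrak C$, and hence so is $(\PP^2,D,\mathfrak K)$ by Lemma~\ref{lem:sup}.
\end{proof}

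\textbf{Main obstacle.} The only genuinely delicate point is bookkeeping, not geometry: one must check that the cluster $\mathfrak K$ on $\PP^2$ transforms correctly through $\pi^{-1}$ and then through the De Jonqui\`eres map, so that the hypothesis ``$m$ larger than the order of $\mathfrak K$'' in Example~\ref{ex:nb}, invoked inside the proof of Proposition~\ref{lem:DJ}, can always be arranged by choosing the multiplicities $k_1,\dots,k_h$ large enough — the order of the transported cluster is bounded in terms of the order of $\mathfrak K$ alone, so this is fine. The reduction of $\tilde D$ to the normal form $\varepsilon E+D''$ with $D''\in|E+d'F|$ is the routine linear-equivalence computation sketched above; the reduction to the snc case is Remark~\ref{rem:bir}. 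Everything else is a direct citation of Proposition~\ref{lem:DJ}, Example~\ref{ex:line}, and Lemma~\ref{lem:sup}.
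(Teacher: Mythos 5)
Your argument is essentially the paper's own: the corollary is there stated as an immediate consequence of Proposition \ref{lem:DJ} via Example \ref{ex:DJ}, i.e.\ blow up the point $P$ of multiplicity $m\in\{d-1,d\}$, transport the marked pair to $\FF_1$ (where the cluster may force the exceptional curve $E$ into the divisorial part), apply Proposition \ref{lem:DJ}, and recognize the composite map as a De Jonqui\`eres transformation, concluding by Example \ref{ex:line} and Lemma \ref{lem:sup}. The only blemish is the linear--equivalence bookkeeping: since the pull--back of a line is $\equiv E+F$, one gets $\tilde D\equiv (d-m)E+dF$ (so $\tilde D\in|E+dF|$ when $m=d-1$, and $E+\tilde D\in|E+dF|$ when $m=d$), not $\tilde D\equiv dF-mE$ nor $|E+(d-1)F|$; this slip is harmless, because the divisor on $\FF_1$ is still of the shape $\varepsilon E+D'$ with $0\leqslant\varepsilon\leqslant1$ and $D'\in|E+dF|$, which is exactly what Proposition \ref{lem:DJ} requires.
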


We have also:

\begin {corollary}\label{lem:minimal} Let $(S,D,\mathfrak K)$ be a marked triple, with $S$ a minimal rational surface and $\kod(S,D)=-\infty$. Then $(S,D,\mathfrak K)$ is contractible.
\end{corollary}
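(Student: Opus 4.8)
The plan is to pass to a minimal model. A minimal rational surface is either $\PP^2$ or a Hirzebruch surface $\FF_n$ with $n\neq 1$, and in each case I would produce a birational map putting $(S,D,\mathfrak K)$ into the scope of Example \ref{ex:line} or of Proposition \ref{lem:DJ}. If $S=\PP^2$, the hypothesis $\kod(\PP^2,D)=-\infty$ in particular forces $|K_{\PP^2}+D|=\emptyset$, i.e.\ $h^0(\cO_{\PP^2}(d-3))=0$ with $d=\deg D$, hence $d\leqslant 2$; so $D$ is a line or a reduced conic, and $(\PP^2,D,\mathfrak K)$ is contractible by Example \ref{ex:line}.

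Assume now $S=\FF_n$ with $n=0$ or $n\geqslant 2$; let $E$ be a section with $E^2=-n$ and $F$ a fibre (on $\FF_0$, where the two rulings are symmetric, the labelling will be fixed below). Writing $D\equiv sE+bF$, the hypothesis forces $|K_{\FF_n}+D|=\emptyset$, and using $K_{\FF_n}\equiv -2E-(n+2)F$ one checks directly that this means $s\leqslant 1$ or $b\leqslant n+1$; accordingly, on $\FF_0$ I choose the ruling so that $s=D\cdot F\leqslant 1$. The key observation is that every irreducible component of $D$ distinct from $E$ meets $E$ nonnegatively, so if it meets a general fibre at all it is a section $\equiv E+cF$ with $c\geqslant n$, in particular smooth and irreducible. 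If $D$ had two such components, or a component meeting $F$ at least twice, then $D\equiv sE+bF$ with $s\geqslant 2$ and $b\geqslant 2n$; hence $K_{\FF_n}+D\equiv(s-2)E+(b-n-2)F$ would be an effective combination of $E$ and $F$ (as $b-n-2\geqslant n-2\geqslant 0$ for $n\geqslant 2$), so $|K_{\FF_n}+D|\neq\emptyset$, a contradiction; on $\FF_0$ this case is excluded by the choice of ruling. Since moreover any component of $D$ disjoint from a general $F$ is a single fibre, we conclude
\[
D=\varepsilon E+C+F_1+\dots+F_k,\qquad \varepsilon\in\{0,1\},
\]
with $F_1,\dots,F_k\in|F|$ distinct and $C$ either absent or an irreducible section $C\equiv E+cF$, $c\geqslant n$.

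Two cases remain. If $C$ is present, then $D-\varepsilon E=C+F_1+\dots+F_k\in|E+(c+k)F|$ with $c+k\geqslant n$, so $(\FF_n,D,\mathfrak K)$ is literally of the shape treated in Proposition \ref{lem:DJ}, and we are done. If $C$ is absent, so $D=\varepsilon E+F_1+\dots+F_k$ is a sum of fibres, possibly together with $E$, I would enlarge $D$: set $D^{+}:=D+E'+C$, where $E'$ is a copy of $E$ adjoined when $E$ does not already appear in $D$ (and $E'=0$ otherwise), so that $E$ occurs in $D^{+}$ with multiplicity exactly $1$, and $C$ is a general member of $|E+cF|$ with $c\geqslant n$, which one may take smooth, irreducible and not a component of $D$ (see Example \ref{ex:DJ}). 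Then $D^{+}$ is reduced, $D^{+}\geqslant D$, and $D^{+}=E+(C+F_1+\dots+F_k)$ with $C+F_1+\dots+F_k\in|E+(c+k)F|$; hence $(\FF_n,D^{+},\mathfrak K)$ is contractible by Proposition \ref{lem:DJ}, and $(\FF_n,D,\mathfrak K)$ is contractible by Lemma \ref{lem:sup}.

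I do not expect a genuine obstacle here: once the minimal model is reached the argument is elementary, and the only things to be careful about are the numerical bookkeeping of the case division above and the slightly different status of the negative section on $\FF_n$ with $n\geqslant 2$ versus the symmetric rulings on $\FF_0$. In particular the marking $\mathfrak K$ requires no extra work, since Proposition \ref{lem:DJ} and Example \ref{ex:line} already accommodate an arbitrary cluster, and Lemma \ref{lem:sup} transports contractibility of the enlarged triple back to the original one.
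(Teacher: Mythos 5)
Your proof is correct and follows essentially the same route as the paper: classify the minimal rational surface, use $|K_S+D|=\emptyset$ to constrain the class of $D$, and reduce to Example \ref{ex:line} on $\PP^2$ and to Proposition \ref{lem:DJ} (together with Lemma \ref{lem:sup}) on $\FF_n$, $n\neq 1$. The only difference is bookkeeping: the paper argues numerically on the class $aE+bF$ to show that either $a\leqslant 1$ or $a=2$ and $E$ splits off $D$, whereas you classify the components of $D$ (sections, multisections, fibres) directly and treat the fibres-only case by an explicit enlargement of $D$, a detail the paper leaves implicit.
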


\begin {proof} We assume $D$ non--zero, otherwise there is nothing to prove.

If $S=\PP^ 2$, then $\kod(S,D)=-\infty$ implies $D\equiv kL$, with $1\leqslant k\leqslant 2$ and the assertion follows by Example \ref {ex:line}. 

If $S=\FF_n$,  we have $D\equiv aE+bF$, with $a,b\geqslant 0$ and  $a+b>0$. 

If $n=0$, then $\kod(S,D)=-\infty$ implies that either $0\leqslant a\leqslant 1$ or $0\leqslant b\leqslant 1$. We may assume that $0\leqslant a\leqslant 1$, then the assertion follows  from  Proposition \ref {lem:DJ}.  

Assume next $n\geqslant 2$.  If $a\leqslant 1$, the assertion follows again from Proposition \ref {lem:DJ}.   Suppose $a\geqslant 2$. If $b\geqslant n+2$, then $K_{S}+D\equiv (a-2)E+(b-n-2)F$ is effective, a contradiction.  So  $b\leqslant n+1$. Suppose that $b<n(a-1)$, then  $D\cdot E=-an+b<-n$ which implies that $2E$ is contained in $D$, a contradiction, since $D$ is reduced. Therefore we have $n+1\geqslant b\geqslant n(a-1)$, hence we must have $a=2$. But then $D\cdot E=-2n+b<0$, so $E$ splits off $D$ and we may apply Proposition \ref {lem:DJ} to conclude.   \end{proof}

\subsection {Small pairs} \label {ssec:small} Let $(S,D)$ be a pair as above. We will often write $K$ to denote a canonical divisor $K_S$ of $S$. 

We say that the pair $(S,D)$ is \emph{small} if there is no $(-1)$--curve $E$ on $S$ such that $\epsilon:=E\cdot D\leqslant 1$.  Since $D$ is reduced, one has $\epsilon\geqslant -1$, with equality if and only if  $E$ is a connected component of $D$, in which case we say that $E$ is an \emph {isolated component} of $D$. 

Let $E$ be a $(-1)$--curve offending smallness, let $\pi: S\to S'$ be the contraction of $E$ to a point $P'$ and let $\pi_*(D)=D'$. 

If $E$  is not contained in $D$, then $D'$ is isomorphic to $D$ and $D'$ has multiplicity $\epsilon$ in $P'$. If $E$ is contained in $D$, one has $E\cdot (D-E)=\epsilon+1$, hence one has the following different possibilities:\\
\begin{inparaenum} 
\item [(i)] $\epsilon =-1$, i.e., $E$ is an isolated component of $D$, then $D'$ is isomorphic to $D-E$;\\
\item [(ii)] $\epsilon=0$, then $E$ intersects $D-E$ at a smooth point (i.e., $E$ is a {terminal} component of $D$), hence $D'$ is isomorphic to $D-E$ and $P'$ is  a smooth point of $D'$;\\
\item [(iii)] $\epsilon=1$ and $E$ intersects $D-E$ at two distinct smooth points, hence $D'$ acquires a node at $P'$;\\
\item [(iv)] $\epsilon=1$ and $E$ intersects $D-E$ at a point $P$ with intersection multiplicity 2, and $D-E$ has a \emph{cusp of order} $k\geqslant 1$ at $P$ (i.e., $D-E$ has, in a suitable neighborhood of $P$, local equation of the form $y^ 2=x^ {2k+1}$), in which case $D'$ has at $P'$ a cusp of order $k+1$;\\
\item [(v)] $\epsilon=1$ and  $E$ intersects $D-E$ at a double point $P$ with intersection multiplicity 2, and $D-E$ has a \emph{tacnode of order} $k\geqslant 1$ at $P$
(i.e., $D-E$ has in a suitable neighborhood of $P$ local equation of the form $y^ 2=x^ {2k}$), in which case $D'$ has at $P'$ a tacnode of order $k+1$.
\end{inparaenum}

\begin{lemma}\label{lem:generality} In the above setting, one has:\\
\begin{inparaenum}
\item [(i)] $D'$ has the same number of connected components of $D$, unless $E$ is an isolated component of $D$, in which case $D'$ has one component less than $D$;\\  
\item [(ii)]  $\pi^ *(K_{S'}+D')=K_S+D-(1-\epsilon) E$, in particular
$\kappa(S,D)=-\infty$ implies $\kappa(S',D')=-\infty$;\\
\item [(iii)] $p_a(D)=p_a(D')$, unless $E$ is an isolated component of $D$, in which case 
 $p_a(D)=p_a(D')-1$. 
\end{inparaenum}
\end{lemma}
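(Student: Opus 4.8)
The statement is purely local around the point $P'=\pi(E)$, together with the standard behavior of the canonical class under a single blow-down, so the plan is to handle the three items by inspecting the five cases (i)--(v) listed just before the statement, plus the case $E\not\subset D$. For item (i), the number of connected components can only change when a whole component of $D$ is contracted, i.e. when $E$ is an isolated component ($\epsilon=-1$); in every other case $\pi$ restricts to a surjection $D\to D'$ which is an isomorphism off $E$, and $E$ meets $D-E$ (or, if $E\not\subset D$, meets $D$) in a connected set, so no component of $D'$ gets disconnected and none is removed — hence the component count is preserved. When $E$ is isolated, $D=E+(D-E)$ with $E$ disjoint from $D-E$, so $D'\cong D-E$ has exactly one fewer connected component. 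First I would record these observations case by case; it is entirely elementary.

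For item (ii), I would use the projection formula for the single blow-up $\pi\colon S\to S'$ at $P'$ with exceptional curve $E$. Write $m$ for the multiplicity of $D'$ at $P'$; then $\pi^*D'=D+mE$ if we view $D$ as the proper transform, and $\pi^*K_{S'}=K_S-E$. Hence $\pi^*(K_{S'}+D')=K_S+D+(m-1)E$. Now one checks in each of the cases that the coefficient $m-1$ equals $-(1-\epsilon)=\epsilon-1$, i.e. $m=\epsilon$: indeed if $E\not\subset D$ then $D'$ has multiplicity $\epsilon$ at $P'$ by the description given, so $m=\epsilon$ and we are done; if $E\subset D$, then $D-E$ is the proper transform of $D'$ and $m=(D-E)\cdot E=\epsilon+1$ — but here the proper transform appearing in $\pi^*D'=(D-E)+mE$ contributes, and combined with the extra $E$ already inside $D$, the total coefficient of $E$ in $\pi^*(K_{S'}+D')-(K_S)$ works out to $(m-1)-1+1=\epsilon-1$ again (the bookkeeping: $\pi^*D'=(D-E)+(\epsilon+1)E=D+\epsilon E$, so $\pi^*(K_{S'}+D')=K_S+D+(\epsilon-1)E$, exactly $K_S+D-(1-\epsilon)E$). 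The consequence $\kappa(S,D)=-\infty\Rightarrow\kappa(S',D')=-\infty$ then follows as in Lemma \ref{lem:mor}: if $m(K_{S'}+D')$ is effective then so is $\pi^*(m(K_{S'}+D'))=m(K_S+D)-m(1-\epsilon)E$, and since $1-\epsilon\geqslant0$ this forces $m(K_S+D)$ to be effective, contradicting $\kappa(S,D)=-\infty$.

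For item (iii), the cleanest route is adjunction: $2p_a(D)-2=D\cdot(D+K_S)$ and $2p_a(D')-2=D'\cdot(D'+K_{S'})$. Using $\pi^*(K_{S'}+D')=K_S+D-(1-\epsilon)E$ from (ii) and $\pi^*D'=D+\epsilon E$ (when $E\subset D$; and $\pi^*D'=D+\epsilon E$ as well when $E\not\subset D$, since then $D'$ has multiplicity $\epsilon$ at $P'$ and $D$ is its proper transform), the projection formula gives
\[
D'\cdot(D'+K_{S'})=\pi^*D'\cdot\pi^*(D'+K_{S'})=(D+\epsilon E)\cdot\bigl(D+K_S-(1-\epsilon)E\bigr).
\]
Expanding with $E^2=-1$, $E\cdot K_S=-1$ and $E\cdot D=\epsilon$ (or $E\cdot D=0$ in the isolated case, where instead one treats $D-E$), this equals $D\cdot(D+K_S)$ in all cases except when $E$ is an isolated component of $D$; there one gets $D'\cdot(D'+K_{S'})=(D-E)\cdot(D-E+K_{S'})$-type bookkeeping showing $p_a(D')=p_a(D)-1$, consistently with $p_a(D)=p_a(D-E)+p_a(E)-1=p_a(D')+0-1$. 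Alternatively (iii) follows immediately from item (i) together with the genus formula for reduced curves under blow-up: $p_a$ of the total transform drops by $\binom{m}{2}$ at a point of multiplicity $m$, but passing to the proper transform of $D'$ we recover exactly $p_a(D)$ unless a whole $(-1)$-curve is split off, which costs one.

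I do not expect a genuine obstacle here: the only thing requiring care is the sign/coefficient bookkeeping in (ii), keeping straight whether $E$ is or is not a component of $D$, and correctly reading off the multiplicity $m$ of $D'$ at $P'$ from the case list (i)--(v) that precedes the lemma. Once $m=\epsilon$ (equivalently the coefficient is $\epsilon-1$) is verified uniformly, (i) is a connectedness remark and (iii) is one application of adjunction plus the projection formula, so I would present all three parts together by a short case analysis rather than separately.
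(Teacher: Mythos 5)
Your proposal is correct and essentially coincides with the paper's own treatment: the paper declares (i) and (ii) obvious and deduces (iii) from (ii) together with adjunction (handling $E\not\subset D$ by noting $D\cong D'$, which your uniform computation with $\pi^*D'=D+\epsilon E$, $\pi^*K_{S'}=K_S-E$ also covers, since $\epsilon\leqslant 1$ makes the correction term $\epsilon(\epsilon-1)$ vanish). Two small slips to correct: when $\epsilon=1$ and $E$ meets $D-E$ at two distinct points that intersection is not a connected set (the component count is still preserved because $\pi$ identifies those points and the image of a connected curve stays connected), and in the isolated case the correct relation is $p_a(D)=p_a(D')-1$, as your own additivity formula $p_a(D)=p_a(D-E)+p_a(E)-1$ shows, not $p_a(D')=p_a(D)-1$.
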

\begin{proof} Parts (i) and (ii) are obvious. As for part (iii), the assertion is clear if $E$ is not contained in $D$, because then $D$ and $D'$ are isomorphic. If $E$ is contained in $D$ the assertion follows from part (ii) and adjunction formula. \end{proof}

By iterating contractions of curves offending smallness, one arrives at a small pair 
$(S_\sigma,D_\sigma)$, where  $\pi: S\to S_\sigma$ is a birational morphism and $D_\sigma=\pi_*(D)$. The pair $(S_\sigma,D_\sigma)$ is called a \emph{small model} of $(S,D)$.

\begin{remark}\label{rem:unique} A small model of $(S,D)$ is in general not unique, since it may depend on the $(-1)$--curves which one contracts first. For instance if $D=E_1+E_2$, with $E_1,E_2$ two $(-1)$--curves such that $E_1\cdot E_2=1$, then we may either contract $E_1$ or $E_2$, and the two resulting surfaces are obtained one from the other by an elementary transformation in a pencil of rational curves.
\end{remark}

We will need to keep track of the components of $D$ which is necessary to contract in order to come to a small model $(S_\sigma,D_\sigma)$. This datum is  encoded in the cluster $\mathfrak K_\pi$ determined by $\pi: S\to S_\sigma$ and by the curves contracted by $\pi$ (recall Remark \ref {rem:aa}), or rather, in the marked triple $(S_\sigma,D_\sigma,\mathfrak K_\pi)$.    

More generally, one can start with a marked triple $(S,D,\mathfrak K)$. If $(S,D)$ is small, also the triple
$(S,D,\mathfrak K)$ will be said to be \emph{small}. In any case, let  $(S_\sigma,D_\sigma)$ be a small model of $(S,D)$, with $\pi: S\to S_\sigma$ (observe that, since $\pi$ is a morphism, it does not blow--up $\mathfrak K$). We define $\mathfrak K_\sigma$ to be the union of  ${\rm cl}_\pi(\mathfrak K)$ and of $\mathfrak K_\pi$. The small triple $(S_\sigma,D_\sigma,\mathfrak K_\sigma)$ will be said to be  a \emph{small model} of  $(S,D,\mathfrak K)$. 

We finish by observing that the contractibility problem for a pair $(S,D)$ is somehow trivial if $S$ is not rational. Indeed, we have:

\begin{prop}\label{prop:t} Let $(S,D)$ be a pair with $S$ not rational and let $(S_\sigma,D_\sigma)$ be a small model. Then $(S,D)$ is contractible if and only if:\\
\begin{inparaenum}
\item [(i)]  either $D_\sigma=0$ and  $S_\sigma$ is minimal, which is the case if $\kod(S)\geqslant 0$,\\
\item [(ii)] or, only if $\kod(S)=-\infty$, each irreducible component of $D_\sigma$ is contained in a fibre of the Albanese morphism of $S_\sigma$.
\end{inparaenum}
\end{prop}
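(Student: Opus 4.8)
\medskip

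The plan is to reduce everything to the structure of $S$ and $S_\sigma$ as non-rational surfaces, where the Albanese morphism controls all rational curves, and then use the fact that contractibility only sees proper transforms of $D$. First I would recall that $S$ is not rational, hence neither is $S_\sigma$, so $S_\sigma$ has an Albanese morphism $a\colon S_\sigma \to \mathrm{Alb}(S_\sigma)$ whose image is a curve of positive genus when $\kod(S)=-\infty$ (this is the only non-trivial case for a non-rational surface admitting rational curves) and a surface (indeed $a$ is generically finite onto its image) when $\kod(S)\ge 0$. The key elementary fact I would establish is: any rational curve on a non-rational surface $Y$ with $q(Y)>0$ is contracted by $a$, because there are no nonconstant maps $\PP^1\to \mathrm{Alb}(Y)$. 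A smallness argument then shows that in case $\kod(S)\ge 0$ the surface $S_\sigma$ is actually minimal (it has no $(-1)$-curves at all by Lemma \ref{lem:generality} and minimality of models of non-rational surfaces), and moreover $D_\sigma$ must be $0$: any component of $D_\sigma$, being contractible by a birational map, has geometric genus $0$ by Lemma \ref{lem:ob}, hence is rational, hence (if $\kod(S_\sigma)\ge0$) cannot exist on a minimal surface of nonnegative Kodaira dimension since such a surface contains no rational curves at all. This gives the "only if" direction of (i) and shows (ii) is vacuous when $\kod(S)\ge0$.

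\medskip

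For the converse in case (i), if $D_\sigma=0$ then the birational morphism $\pi\colon S\to S_\sigma$ already contracts $D$ to points (its proper transform on $S$ is $D$, pushed to $0$ on $S_\sigma$), so $(S,D)$ is contractible essentially by definition; minimality of $S_\sigma$ is automatic here and I would phrase the "which is the case if $\kod(S)\ge0$" clause as the content just proved above. For case (ii), with $\kod(S)=-\infty$ and $S$ non-rational, each component of $D_\sigma$ has geometric genus $0$ (Lemma \ref{lem:ob}), hence its normalization is $\PP^1$, hence it is contracted by the Albanese map $a\colon S_\sigma\to B$ with $B$ a curve of genus $\ge1$; thus each component of $D_\sigma$ lies in a fibre of $a$. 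Conversely, if every component of $D_\sigma$ lies in a fibre of $a$, I would run elementary transformations relative to the ruling(s) coming from the fibration (after passing to a relatively minimal model, a $\PP^1$-bundle over $B$, or more directly by fibrewise contractions as in Example \ref{ex:eltr} applied to the base-point-free pencil of fibres): each fibre is a $\PP^1$, and a sub-divisor of a chain/tree of $\PP^1$'s inside fibres can be contracted by a sequence of elementary transformations and blow-downs exactly as in the proof of Proposition \ref{lem:DJ}. This produces $S'$ with $D$ mapped to $0$, so $(S,D)$ is contractible.

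\medskip

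The only remaining point is the necessity of (ii), i.e. that if $(S,D)$ is contractible (with $\kod(S)=-\infty$, $S$ non-rational, and $D_\sigma\neq0$) then each component of $D_\sigma$ lies in an Albanese fibre; but this is precisely the geometric-genus-$0$ observation above combined with "no nonconstant map $\PP^1\to B$," applied to each component of $D_\sigma$, which is a curve on $S_\sigma$ dominated by $\PP^1$. I would also note that contractibility of $(S,D)$ and of $(S_\sigma,D_\sigma)$ are equivalent: $\pi$ is a birational morphism not contracting any component of $D$ to a point unless that component is already destined to be contracted (the isolated-component case of Lemma \ref{lem:generality}), so one invokes Remark \ref{rem:bir} together with Lemma \ref{lem:ob} to pass freely between the two, and any offending $(-1)$-curve contracted in forming the small model is itself contracted, so it does not affect the criterion.

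\medskip

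The main obstacle I expect is not conceptual but bookkeeping: making the "contract a forest of $\PP^1$'s sitting in fibres of a ruling" step precise enough, since $D_\sigma$ could meet a single fibre in a connected sub-tree that is not a single component, and one must choose the order of elementary transformations so that no infinitely-near complications arise — but this is exactly the mechanism already used in Example \ref{ex:eltr} and Proposition \ref{lem:DJ}, and after reducing to a relatively minimal ruled model over $B$ it is entirely analogous to the rational case $\FF_n\dasharrow\PP^2$. The genus-of-base obstruction ($\PP^1\not\to B$) is what makes the condition necessary and is where non-rationality of $S$ is used in an essential way.
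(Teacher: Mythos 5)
Your handling of the case $\kod(S)=-\infty$ and of the sufficiency of (i) and (ii) is essentially the paper's argument (geometric genus $0$ of the components via Lemma \ref{lem:ob}, no nonconstant maps from $\PP^1$ to the positive-genus Albanese base, elementary transformations in fibres for the converse). The genuine gap is in the necessity direction when $\kod(S)\geqslant 0$: you deduce $D_\sigma=0$ from the claim that a minimal surface of nonnegative Kodaira dimension ``contains no rational curves at all''. That claim is false: minimal K3, Enriques, elliptic and even general type surfaces can carry many rational curves (for instance $(-2)$--curves and rational fibre components). So a geometric genus $0$ component of $D_\sigma$ may perfectly well live on the minimal $S_\sigma$; what has to be excluded is that it can be \emph{contracted by a birational map to a smooth surface}, and genus alone does not do that. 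Your parenthetical justification that $S_\sigma$ is minimal ``by Lemma \ref{lem:generality} and minimality of models of non-rational surfaces'' is also not a proof: smallness only excludes $(-1)$--curves $E$ with $E\cdot D_\sigma\leqslant 1$, and if $D_\sigma\neq 0$ the small model need not be minimal (blow up the node of a nodal rational curve on a K3 and take $D$ to be the proper transform: the exceptional curve meets $D$ twice, so the pair is small but the surface is not minimal). Note also that your appeal to the Albanese morphism is unavailable here, since a non-rational surface with $\kod\geqslant 0$ may have $q=0$, in which case the Albanese is a point and controls nothing.

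The missing idea is the uniqueness of the minimal model for surfaces with $\kod\geqslant 0$ and the factorization of birational maps through it, which is exactly how the paper argues. If $\phi\colon S\dasharrow S'$ contracts $D$, then $S'$ has the same minimal model $S_{\rm min}$ as $S$, and $p'\circ\phi=f\circ p$ with $p\colon S\to S_{\rm min}$, $p'\colon S'\to S_{\rm min}$ birational morphisms and $f$ an automorphism of $S_{\rm min}$; from $\phi_*(D)=0$ one gets $(f\circ p)_*(D)=0$, hence $p_*(D)=0$, i.e.\ every connected component of $D$ is contained in a $(-1)$--cycle of $p$. From this $D_\sigma=0$ follows, and then smallness together with $D_\sigma=0$ does force $S_\sigma$ to be minimal. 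Replacing your ``no rational curves'' step by this factorization argument repairs the proof; the rest of your proposal matches the paper's route.
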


\begin{proof} If either (i) or (ii) hold, then clearly $(S_\sigma,D_\sigma)$ is contractible (in case (ii) use elementary transformations), hence also  $(S,D)$ is. Assume next that $(S,D)$ is contractible. 

First suppose $\kod(S)\geqslant 0$. Let $\phi: S\dasharrow S'$ be a birational map such that $\phi_*(D)=0$. Consider the commutative diagram
\[
\xymatrix{
& S\ar[d]_{p}
\ar@{-->}[r]^{\phi} &S'  \ar[d]^{p'}\\
&\,\,\,\,\,\,\,\,\,\,\,\,\,S_{\rm min}\,\,\,\,\,\, \ar@{->}[r]_f & \,\,\,\,\,\, S_{\rm min}
}
\]
where $p$ and $p'$ are the birational morphisms to the unique minimal model $S_{\rm min}$ and $f$ is an automorphism of $S_{\rm min}$.  Since $(p'\circ \phi)_*(D)=0$, then also
 $(f\circ p)_*(D)=0$, hence $p_*(D)=0$. This means that each connected component of $D$ is contained in some $(-1)$--cycle on $S$ and the assertion follows.
 
Assume now $\kod(S)=-\infty$. Since $S$ is not rational, then the Albanese morphism factors through a morphism $a: S\to C$, with $C$ a smooth curve of positive genus. Since all irreducible components of $D$ have geometric genus 0 (see Lemma \ref {lem:ob}), the assertion follows. 
\end{proof}

\section{The contraction theorem}\label {sec:thm}

In this section we will prove the following:

\begin{theorem}\label{thm:cool} Let $(S,D,\mathfrak K)$ be a triple with $S$ rational, $D$ connected and such that $\kod(S,D)=-\infty$. Then $(S,D,\mathfrak K)$ is contractible. 
\end{theorem}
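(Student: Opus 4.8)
The plan is to reduce, by a sequence of birational modifications that respect the marking, to one of the base cases already settled in the preliminaries: either a minimal rational surface with $\kod=-\infty$ (Corollary \ref{lem:minimal}), or a triple $(\FF_n, \varepsilon E + D', \mathfrak K)$ with $D' \in |E+dF|$ (Proposition \ref{lem:DJ}), or the plane with a line (Example \ref{ex:line}). By Remark \ref{rem:bir} and Corollary \ref{lem:log} we may assume $(S,D)$ is log smooth, i.e. $D$ is snc with all components smooth rational, and since $D$ is connected with $|K_S+D|=\emptyset$, Lemma \ref{lem:conn}(i) tells us $\mathfrak G(D)$ is a tree. The first step is to pass to a small model $(S_\sigma, D_\sigma, \mathfrak K_\sigma)$ as in \S\ref{ssec:small}: by Lemma \ref{lem:sup} and Remark \ref{rem:bir} it suffices to contract the small model, and by Lemma \ref{lem:generality}(i),(ii) the model is still connected (contractions of isolated components do not occur, as an isolated $(-1)$-curve would be all of $D$, which is then trivially contractible) and still has $\kod=-\infty$; note also the model is still snc by Lemma \ref{lem:conn} applied again. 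So we may assume $(S,D)$ is small.

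**The core argument.** For a small pair with $S$ rational, I would run a minimal model program on $S$ relative to nothing, but guided by the curve $D$: since $S$ is rational, $K_S$ is not nef, so there is an extremal contraction. If $S$ is already minimal we are done by Corollary \ref{lem:minimal}. Otherwise there is a $(-1)$-curve $E$; by smallness $\epsilon := E\cdot D \geq 2$. The key input — this is where Fujita's lemma (\cite{Fu}), the ``hidden'' piece of Mori theory mentioned in the introduction, enters — is that one can choose a $(-1)$-curve $E$ or, more generally, a pencil of rational curves, interacting with $D$ in a controlled way; concretely, one wants to produce either a base-point-free pencil $|F|$ of rational curves on $S$ with $F \cdot D$ small, or to exploit Lemma \ref{lem:numeric}(ii): since $p_a$ of the single connected component is $0$, we have $h^0(2K+D) + h^0(-K-D) \geq (D+K)^2 + 2$, and $\kod(S,D)=-\infty$ forces $h^0(m(K+D))=0$ for all $m\geq1$, in particular $h^0(2K+D)=0$, so $h^0(-K-D) \geq (D+K)^2+2 > 0$. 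Thus $-(K+D)$ is effective. Writing $-(K+D)\equiv A$ with $A$ effective, one analyses $A$: this anti-log-canonical effectivity is what should let us find a fibration or a contraction making $D$ vertical. I would then push $D$ into the fibres of a $\PP^1$- or $\A^1$-fibration (this is morally the Miyanishi–Sugie–Fujita structure theorem quoted in the introduction, but obtained here by surface-theory bookkeeping), reaching an $\FF_n$ with $D = \varepsilon E + (\text{vertical part}) + (\text{at most one horizontal } C \in |E+dF|)$, and conclude by Proposition \ref{lem:DJ} — whose proof, crucially, already shows the relevant homaloidal net does not blow up $\mathfrak K$ provided we take the multiplicities $k_i$ large, via Example \ref{ex:nb}. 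Throughout, the marking $\mathfrak K$ is carried along: each elementary transformation or De Jonquières map used is set up (again by Example \ref{ex:nb}) so as not to blow up $\mathfrak K$, so $\mathrm{div}_f(\mathfrak K)=0$ is preserved and only gets added to the divisorial part in a harmless way.

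**Main obstacle.** The delicate point is the passage from ``$-(K+D)$ effective, $S$ rational, $D$ connected snc tree'' to ``$D$ is vertical for a rational fibration on some $\FF_n$'' while simultaneously keeping the cluster $\mathfrak K$ un-blown-up. The effectivity of $-(K+D)$ by itself does not hand us the fibration; one must argue that after contracting the superfluous $(-1)$-curves (which, by smallness, meet $D$ in $\geq 2$ points, so contracting them creates singularities of $D$ and one must use Remark \ref{rem:mor} to keep $\kod=-\infty$ and Lemma \ref{lem:conn} to control the resulting singularities) one genuinely lands on a minimal surface or on a surface carrying the desired pencil, and that in degenerate configurations (the curve $D$ lying ``badly'' with respect to every pencil) no loop arises. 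Connectedness of $D$ is exactly what rules out the Pompilj-type obstruction — with one component the anti-log-canonical class $-(K+D)$ is effective and this is rigid enough — but making this precise, i.e. showing the inductive descent on, say, $(\rho(S), -K_S \cdot D)$ or $(\rho(S), (D+K)^2)$ terminates, is the part that requires genuine care rather than routine computation. I expect the write-up to proceed by induction on the Picard number $\rho(S)$, with the base case $\rho(S) \le 2$ handled by Corollary \ref{lem:minimal} and Proposition \ref{lem:DJ}, and the inductive step consisting of choosing the right curve to contract (or the right elementary transformation to perform) so that the cluster is untouched and the relevant invariant drops.
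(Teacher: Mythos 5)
Your reduction to a small model and the final appeal to Proposition \ref{lem:DJ} agree with the paper, but the engine you propose for producing the fibration is broken, and the step you yourself flag as the ``main obstacle'' is precisely the content of the paper's proof, so there is a genuine gap. Concretely, you claim $h^0(-K-D)\geq (D+K)^2+2>0$, hence that $-(K+D)$ is effective. But $(D+K)^2+2=K\cdot(K+D)$ can be negative even for small, connected pairs with $\kod(S,D)=-\infty$: on $\FF_2$ (which carries no $(-1)$--curves, so every pair on it is small) take $D=E+F_1+\cdots+F_6$ with $F_1,\ldots,F_6$ distinct fibres. Then $D$ is a connected snc tree with $p_a(D)=0$, $K+D\equiv -E+2F$ so $\kod(S,D)=-\infty$, yet $(D+K)^2=-6$ and $|-K-D|=|E-2F|=\emptyset$. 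So the Riemann--Roch inequality of Lemma \ref{lem:numeric} gives nothing here, the anti-log-canonical class need not be effective, and the inference that was supposed to hand you the fibration collapses (this particular example is of course settled by Corollary \ref{lem:minimal}, but it refutes the implication in the generality in which you use it).

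What the paper actually does, and what is missing from your sketch, is a precise use of Fujita's lemma rather than an induction on $\rho(S)$ (there is none in the paper). With $(S,D)$ small and $S$ not minimal, pick a $(-1)$--curve $E$, so $D\cdot E\geq 2$ by smallness; one first checks that $D+E$ is $1$--connected, whence $h^0(K+D+E)=p_a(D+E)>0$, so the maximal $m$ with $|E+m(K+D)|\neq\emptyset$ is positive. Take $C\in|E+m(K+D)|$. If $C=0$, then $m=1$ and $-K\equiv D+E$, and contracting $E$ and further $(-1)$--curves one lands on a minimal surface where $D$ becomes a singular anticanonical curve, handled by Corollary \ref{cor:plane} (after elementary transformations in the $\FF_n$ case). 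If $C\neq 0$, one shows $K\cdot C<0$ and, crucially, that every component $M$ of $C$ with $K\cdot M<0$ satisfies $M^2\geq 0$ and $M\cdot D\leq 1$: otherwise one produces a $1$--connected $M+D$ with $M\cdot D\geq 2$, so $h^0(K+D+M)>0$, hence $h^0(K+D+C)>0$, contradicting the maximality of $m$ since $K+D+C\equiv E+(m+1)(K+D)$. Then $\varphi_{|M|}$ is either a base-point-free pencil of rational curves meeting $D$ in at most one point (and one concludes via relatively minimal models, elementary transformations and Proposition \ref{lem:DJ}, none of which blows up $\mathfrak K$), or, when $M^2>0$, a birational map to a minimal surface that contracts $D$ or sends it to a line. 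Your proposal names Fujita's lemma but never deploys it in this form, and without it (or some substitute) the passage from ``small, connected, $\kod(S,D)=-\infty$'' to ``$D$ is at most one unisecant plus fibre components of a rational pencil'' remains unproved.
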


\begin{proof}  By Lemma \ref {lem:generality} we may and will assume that $(S,D,\mathfrak K)$ is small and $D$ is non--zero, otherwise there is nothing to prove. 
The hypotheses $D$ connected and $\kod(S,D)=-\infty$ imply that $p_a(D)=0$, so that Lemma \ref {lem:conn}, (i) applies. 

By Corollary \ref {lem:minimal}, we may assume that $S$ is not minimal, so there is a $(-1)$--curve $E$ on $S$, which, by the smallness of $(S,D)$, is such that $D\cdot E\geqslant 2$. 

\begin{claim}\label{cl:00} The divisor $D+E$ is $1$-connected. 
\end{claim}

\begin{proof}[Proof of the Claim] The assertion is clear if $D$ does not contain $E$. 
Write $D+E=A+B$ with $A,B$ both effective, non--zero. There are two possibilities: \begin{inparaenum} \item [(i)] $A\geqslant 2E$ and $B\not\geqslant E$, so that $A=A'+2E$ and $A'\not\geqslant E$; \item [(ii)] $A\geqslant E$ and $B\geqslant E$, so that $A=A'+E$, $B=B'+E$ and  and $A'\not\geqslant E$,  $B'\not\geqslant E$. \end{inparaenum}

In case (i) we have $D=A'+B+E$. Then $A\cdot B=(A'+2E)\cdot B=(A'+E)\cdot B+E\cdot B\geqslant 1$, because $(A'+E)\cdot B\geqslant 1$ and $E\cdot B\geqslant 0$.

In case (ii) we have  $D=A'+B'+E$ hence $E\cdot (A'+B')\geqslant 1$ and we may assume $E\cdot B'\geqslant 1$. 
Then $A\cdot B=(A'+E)\cdot(B'+E)= A'\cdot (B'+E)+E\cdot (B'+E)=A'\cdot (B'+E)+E\cdot B'-1\geqslant A'\cdot (B'+E)\geqslant 1$.
\end{proof}

By Fujita's Lemma (see \cite {Fu}), there is a non--negative integer $m$ such that
\begin{equation}\label {eq:fuj}
\vert E+m(K+D)\vert \neq \emptyset, \,\,\, \text{and}\,\,\, \vert E+(m+1)(K+D)\vert = \emptyset.
\end{equation}

\begin{claim}\label{cl:0} One has $m>0$. 
\end{claim}
\begin{proof}[Proof of the Claim] One has $p_a(E+D)=p_a(E)+p_a(D)+D\cdot E-1\geqslant 1$ and
$D+E$ is 1--connected by Claim \ref {cl:00}.  Hence $h^ 0(D+E,\omega_{D+E})=p_a(D+E)\geqslant 1$. The adjunction exact sequence
\[
0\to \mathcal O_S(K)  \to \mathcal O_S(K+D+E)\to \omega_{D+E}\to 0
\]
yields  $h^ 0(S,O_S(K+D+E))=h^ 0(D+E,\omega_{D+E})>0$, proving the assertion. \end{proof}

We take an effective divisor $C\in \vert E+m(K+D)\vert$. \medskip

\noindent {\bf Case  $C=0$.} Then $E\equiv -m(K+D)$, hence $-1=m^ 2(D+K)^ 2$, so that $m=1$, i.e., $-K\equiv D+E$, hence  $D\cdot E=-E\cdot (K+E)=2$. 

Let $\pi: S\to S'$ be a series of blow--downs of $(-1)$--curves, the first one being $E$, with $S'$ minimal. Let  $D'=\pi_*(D)$, which is a connected reduced anticanonical divisor, singular at $\pi(E)$. Since $\pi$ is a birational morphism, it does not blow--up $\mathfrak K$, hence $\pi_*(D,\mathfrak K)=(D',\mathfrak K')$. 

Since $S'$ is minimal, one has either  $S'=\PP^ 2$ or  $S'=\mathbb F_n$, with $n\neq 1$. 
If $S'=\PP^ 2$, then $D'$ is a singular cubic and Corollary \ref {cor:plane} implies that $(\PP^ 2, D', \mathfrak K')$ is contractible. This  proves the theorem in this case. Assume now $S'=\mathbb F_n$, with $n\neq 1$.  If $n\neq 0$, we make a series of elementary transformations based at general points of a component of $D'$ which is not in $|F|$, and reduce to the case $n=0$. 
If $n=0$, the linear system $\mathcal L$ of curves in $|E+F|$ with a base point general on a component of $D'$, is a homaloidal system, the birational map  
$\phi_\mathcal L:   \mathbb F_0\dasharrow \mathbb P^ 2$ maps $D'$  to a singular curve of degree 3, and we conclude as in the case $S'=\PP^ 2$.

\medskip

\noindent {\bf Case  $C\neq 0$.}
 By Lemma \ref {lem:numeric} and since $|2(K+D)|=\emptyset$, one has
\begin{equation*}\label{eq:K}
C\cdot K=-1+mK\cdot (D+K)\leqslant -1 + mh^ 0(S,\cO_S(-K-D)).
\end{equation*}
If $C\cdot K\geqslant 0$, then $|-K-D|\neq \emptyset$, i.e., $-K-D\equiv A$, with $A$ effective and non--zero, because $D+K$ is not effective. On the other hand $C\equiv E+m(K+D)\equiv E-mA$, thus $m=0$ against Claim \ref {cl:0}. 

So $C\cdot K< 0$, hence there is an irreducible component $M$ of $C$ such that $M\cdot K<0$. 

\begin{claim}\label{cl:n} All components of $C$ are smooth and rational. Moreover
 $M^ 2\geqslant 0$ and $M\cdot D\leqslant 1$. 
\end{claim}

\begin{proof}[Proof of the Claim] The first assertion follows from $|C+D+K|=\emptyset$.

In order to prove the rest, we first remark that it cannot be the case that $M\cdot D\geqslant 2$ and $M+D$ is 1--connected. Indeed, in this case one has 
\begin{equation}\label{eq:k}
h^ 0(M+D,\omega_{M+D})=p_a(M+D)=p_a(M)+p_a(D)+M\cdot D-1\geqslant 1.
\end{equation} Then the adjunction exact sequence
\begin{equation}\label{eq:h}
0\to \mathcal O_S(K)  \to \mathcal O_S(K+M+D)\to \omega_{M+D}\to 0
\end{equation}
yields  
\begin{equation}\label{eq:hh}
h^ 0(S,\mathcal O_S(K+D+C))\geqslant h^ 0(S,\mathcal O_S(K+D+M))=h^ 0(M+D,\omega_{M+D})>0,
\end{equation} a contradiction. 

If $M^ 2\geqslant 0$, then $\dim (|M|)\geqslant 1$, hence there is some irreducible curve $M'\equiv M$ which is not contained in $D$.  Then $M\cdot D\leqslant 1$ follows. Otherwise one has $M\cdot D\geqslant 2$ and $M+D$ is 1--connected, leading, as we saw, to a contradiction.

Next we argue by contradiction and assume $M^ 2<0$, hence $M^ 2=-1$ because $K\cdot M<0$. 

If $M$ is not contained in $D$, the same argument as above implies that  $M\cdot D\leqslant 1$, against the smallness assumption. 
Hence we may assume that $D$ contains $M$ simply, because $D$ is reduced. 
By smallness, we have $M\cdot D\geqslant 2$. Then we claim that $M+D$ is 1--connected, leading again to a contradiction. 

To prove that $M+D$ is 1--connected, write $M+D=A+B$, with $A,B$ effective and not zero. Note that 
$M+D$ contains $M$ with multiplicity 2. 
If $2M$ is contained in $A$, then $M$ is not contained in $B$, and we write $A=2M+A'$, and $D=M+A'+B$. Since $D$ is connected, we have $(M+A')\cdot B\geqslant 1$ and 
\[
A\cdot B=(2M+A')\cdot B=(M+A')\cdot B+M\cdot B\geqslant 1.
\]
Otherwise, $M$ is contained simply in both $A$ and $B$, and we write $A=M+A'$, $B=M+B'$, $D=M+A'+B'$, and $A'\cdot B'\geqslant 0$, because $A',B'$ have no common component. One has $2\leqslant M\cdot D=M^ 2+M\cdot (A'+B')$, hence $M\cdot (A'+B')\geqslant 3$. Then
\[
A\cdot B=(M+A')\cdot (M+B')=M^ 2+M\cdot (A'+B')+A'\cdot B'\geq 2.
\] \end{proof}

We can now conclude the proof of the theorem. If $M^ 2=0$, then $|M|$ is a base point free pencil of rational curves which determines a morphism
$\varphi_{|M|}: S\to \PP^ 1$. 

If $M\cdot D=0$, then $D$, which is connected, is contained in a fibre. By making 
$\varphi_{|M|}: S\to \PP^ 1$ \emph{relatively minimal} (i.e., all fibres isomorphic to $\PP^ 1$), 
 we have a birational morphism $f: S\to \FF_n$ for some $n\in \mathbb N$, which does not blow--up $\mathfrak K$. Either $f$ contracts $D$ to a point, and we are done, or it maps $D$ to a fibre $F$ of $\FF_n\to \PP^ 1$, in which case we can still contract it by making an elementary transformation based at a general point  of $F$.

 If $M\cdot D=1$, then $D=\sigma+\mathfrak f_1+\ldots+\mathfrak f_h$, where $\sigma$ is a section
of $\varphi_{|M|}: S\to \PP^ 1$ and $\mathfrak f_1,\ldots,\mathfrak f_h$ are disjoint connected components in different fibres of $\varphi_{|M|}: S\to \PP^ 1$. By making 
$\varphi_{|M|}: S\to \PP^ 1$ \emph{relatively minimal}, 
 we have a birational morphism $f: S\to \FF_n$ for some $n\in \mathbb N$, which does not blow--up $\mathfrak K$, and $f_*(D,\mathfrak K)=(f_*(D),\mathfrak K')$, with $f_*(D)$ consisting of 
 a section $D'$ of $f_n:\FF_n\to \PP^ 1$ and of distinct fibres $F_1,\ldots,F_k$, with $k\leqslant h$. 
 By applying Proposition \ref {lem:DJ}, we see that $(\FF_n,f_*(D),\mathfrak K')$ is contractible.
 
 If $M^ 2>0$, then  $|M|$ is base point free and the morphism $\varphi_{|M|}$ birationally maps $S$ to a minimal rational surface without blowing--up $\mathfrak K$. Since $M\cdot D\leqslant 1$, then either $\varphi_{|M|}$  contracts $D$ or it maps $D$ to a line (plus perhaps points) and the contractibility of $(S,D, \mathfrak K)$ follows again.\end{proof}
 
 \begin{remark}\label{itaka}  The proof of Theorem \ref {thm:cool} can be easily adapted to prove also (a stronger version of) Iitaka's Theorem \ref  {thm:Iitaka}, to the effect that if $(S,D,\mathfrak K)$ is a marked triple with $S$ rational and $D$ with snc, at most two irreducible components and $\kod(S,D)=-\infty$, then  $(S,D,\mathfrak K)$ is contractible. We briefly sketch the  argument.
 
As in the proof of Theorem \ref {thm:cool}, we may assume that $(S,D,\mathfrak K)$ is small.
We may suppose $D$ consists of two irreducible component $D_1,D_2$, with $D_1\cdot D_2=0$, otherwise Theorem \ref {thm:cool} applies.  The curves $D_1$ and $D_2$ are smooth and rational. Again we may assume there is a $(-1)$--curve $E$ such that $D\cdot E\geqslant 2$.

If $D_1\cdot E=D_2\cdot E=1$, we consider the contraction $f: S\to S'$  of $E$. Then $D':=f_*(D)$ is connected and $\kod(S',D')=-\infty$ by Remark \ref {rem:mor}. So we apply Theorem \ref {thm:cool} to $(S',D')$ and finish. 

Next we can assume that $E\cdot D_1\geqslant 2$. Then $p_a(D_1+E)>0$, hence $h^ 0(S, \mathcal O_S(K+D+E))\geqslant h^ 0(S, \mathcal O_S(K+D_1+E))>0$.  Consider $m$ as in \eqref {eq:fuj}. Since $h^ 0(S, \mathcal O_S(K+D+E))>0$, Claim \ref  {cl:0} holds. 

Take again $C\in \vert E+m(K+D)\vert$, all irreducible components of which are smooth and rational. The discussion of the case $C=0$ goes through as in the proof of Theorem \ref {thm:cool}. In the case $C\neq 0$, we still find an irreducible component $M$ of $C$ such that $M\cdot K<0$.  Since $0=h^ 0(S, \mathcal O_S(K+D+C))\geqslant h^ 0(S, \mathcal O_S(K+D+M))$, then $|K+D+M|$ is empty. This implies that one of the following occurs:\\
\begin{inparaenum}
\item [(i)] $M$ coincides with one  of the curves $D_1,D_2$;\\
\item [(ii)] $M$ is distinct from $D_1$ and $D_2$ and $M\cdot D\leqslant 1$;\\
\item [(iii)] $M$ is distinct from $D_1$ and $D_2$ and $M\cdot D_1=M\cdot D_2=1$.
\end{inparaenum}
 
In case (i) assume $M=D_1$. Then $D_1^ 2\geqslant 0$, otherwise $D_1$ is a $(-1)$--curve, against smallness. By blowing--up points of $D_1$ we may assume $D_1^ 2=0$. Consider $\varphi_{|D_1|}: S\to \mathbb P^ 1$ so that $D_1$ is a fibre and $D_2$ is contained in another fibre. By making $\varphi_{|D_1|}$ relatively minimal and operating with suitable elementary transformations, we dispose of this case.

In case (ii), we have $M^ 2\geqslant 0$ by smallness. As above we may assume that $M^ 2=0$. Consider $\varphi_{|M|}: S\to \mathbb P^ 1$, which either maps $D_1$ and $D_2$ to curves in fibres or to a curve in a fibre plus a unisecant curve to the fibres. By making $\varphi_{|M|}$ relatively minimal, making elementary transformations and finally applying Proposition \ref {lem:DJ} we finish in this case. 

In case (iii), we may assume again $M^ 2\geqslant 0$, otherwise we contract $M$ and apply Theorem \ref {thm:cool} as above. Again we may assume that $M^ 2=0$. 
Then $\varphi_{|M|}: S\to \mathbb P^ 1$  maps $D_1$ and $D_2$ to two unisecant curves to the fibres. By making $\varphi_{|M|}$ relatively minimal and operating with elementary transformations, we can obtain a birational map  $f: S\dasharrow \FF_1$ mapping $D_1$ to $E$ and $D_2$ to another unisecant. Then, applying again Proposition  \ref {lem:DJ} we finish in this case too.  
\end{remark}
 
 \section{Applications}\label {sec:appl}
 
In \cite[Proposition 4.9]{CC} we proved the following proposition by induction on the degree $d$ and by using quadratic transformations. Theorem \ref {thm:cool} allows us to give a faster and more conceptual proof.

\begin{prop}
Let $C$ be the union of $d\ge4$ distinct lines in $\PP^2$ with a point $P_0$ of multiplicity $d-2$ and $2d-3$ nodes.
Then there exists a plane Cremona transformation which contracts $C$ to a set of points.
\end{prop}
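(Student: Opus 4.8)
The plan is to apply Theorem~\ref{thm:cool} to the pair $(\PP^2, C)$, once we have checked that its hypotheses are met. First I would observe that $C$ is connected: a point of multiplicity $d-2$ lies on $d-2$ of the lines, and the remaining two lines each meet this configuration in at least one point, so $\mathfrak G(C)$ is connected. Since each component is a line and the singularities are a point of multiplicity $d-2$ together with $2d-3$ nodes, one computes $p_a(C)$ via the genus formula for plane curves: $p_a(C)=\binom{d-1}{2}-\binom{d-2}{2}-(2d-3)=\tbinom{d-1}{2}-\tbinom{d-2}{2}-(2d-3)$. Expanding, $\binom{d-1}{2}-\binom{d-2}{2}=d-2$, so $p_a(C)=(d-2)-(2d-3)=1-d<0$ for $d\ge4$. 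Hence $C$ is a connected reduced curve with negative arithmetic genus.

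The key step is then to deduce $\kod(\PP^2,C)=-\infty$. Here I would argue directly: if $\kod(\PP^2,C)\ge0$, then some $|m(K+C)|$ is nonempty, so $m(K+C)$ is effective for some $m\ge1$; intersecting with a general line $L$ gives $m(K+C)\cdot L\ge0$, i.e.\ $m(-3+d)\ge0$, which is consistent, so this crude bound is not enough. Instead I would note that $C$ connected with $p_a(C)<0$ already forces $|K+C|=\emptyset$: the adjunction sequence $0\to\cO_{\PP^2}(K)\to\cO_{\PP^2}(K+C)\to\omega_C\to0$ together with $h^0(\PP^2,\cO(K))=h^1(\PP^2,\cO(K))=0$ gives $h^0(K+C)=h^0(C,\omega_C)=p_a(C)$ when $C$ is $1$-connected (which holds since $C$ is connected with all components rational and one checks $1$-connectedness of the configuration), and $p_a(C)<0$ is absurd unless $h^0(\omega_C)=0$, forcing $|K+C|=\emptyset$. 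For the higher plurigenera I would instead pass to an embedded resolution $(\tilde S,\tilde C)$, apply Lemma~\ref{lem:conn}(i) (each connected component has $p_a=0$ after resolution is not quite right — rather, I would use that $(S,C)$ contractible is what we want, so it is cleaner to cite Theorem~\ref{thm:cool} only after establishing $\kod=-\infty$). The honest route: since $C$ is connected and rational with $p_a(C)<0$ it is not literally covered by the $p_a=0$ hypothesis threaded through Lemma~\ref{lem:conn}; so I would first blow up to make $C$ snc, obtaining $\bar C$ on $\bar S$, note $p_a(\bar C)\le p_a(C)<0$ is impossible for a connected curve, so in fact $C$ cannot be connected with $p_a<0$ — meaning I must recheck the node count.

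Rechecking: the standard count is that $d$ general lines have $\binom d2$ nodes, and forcing $d-2$ of them through $P_0$ replaces $\binom{d-2}{2}$ of those nodes by the single point $P_0$, leaving $\binom d2-\binom{d-2}{2}=2d-3$ nodes away from $P_0$ — consistent with the statement. Then $p_a(C)=\binom{d-1}{2}-\binom{d-2}{2}-(2d-3)$, and $\binom{d-1}{2}-\binom{d-2}{2}=d-2$, giving $p_a(C)=(d-2)-(2d-3)=-(d-1)$, which is indeed negative, and a connected curve has $p_a\ge0$ by the remark in \S2; the resolution of this apparent contradiction is that the ordinary multiple point $P_0$ contributes $\binom{d-2}{2}$ to $\delta$ but the correct virtual genus bookkeeping uses only these $\delta$-invariants, so for $C$ connected we must have $p_a(C)\ge0$, hence my intersection-theoretic degree count of nodes is the one to trust and the curve as described forces $d$ small — but the statement says $d\ge4$, so the right reading is that $C$ is connected and $p_a(C)=0$ is forced, meaning the actual number of nodes is $2d-3$ and everything is consistent with $p_a=0$ once one recomputes $\binom{d-1}{2}=\binom{d-2}{2}+(2d-3)+p_a$ with $p_a=0$, i.e. $\binom{d-1}{2}-\binom{d-2}{2}=2d-3$, i.e. $d-2=2d-3$, i.e. $d=1$. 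Since this only holds for $d=1$, the curve $C$ is in fact \emph{not} connected for $d\ge4$.

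Given that $C$ is disconnected, the clean plan is different: I would not use Theorem~\ref{thm:cool} but rather argue that $(\PP^2,C,\emptyset)$ is contractible by exhibiting an explicit De~Jonqui\`eres transformation. The curve $C$ has degree $d$ with a point $P_0$ of multiplicity $d-2$; this is one less than $d-1$, so Corollary~\ref{cor:plane} does not apply verbatim, but after a single quadratic transformation based at $P_0$ and two nodes one lowers the degree and raises the multiplicity relative to the degree, and iterating (as in \cite[Proposition 4.9]{CC}) terminates. Alternatively — and this is the ``faster and more conceptual'' route promised in the text — I would blow up $P_0$, map to $\FF_1$ by contracting one of the lines through $P_0$, exhibit $C$ on $\FF_1$ as $\varepsilon E+D'$ with $D'\in|E+dF|$ up to components in $|F|$, and invoke Proposition~\ref{lem:DJ} directly: the $d-2$ lines through $P_0$ become (after blowing up $P_0$ and contracting $E$ back the other way, or rather after a sequence of elementary transformations) fibres $F$, the two remaining lines a section plus a fibre, so $C$ lands in the shape $\varepsilon E + D'$ with $D'$ a section-class curve plus fibres. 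Thus $(\FF_n, C', \mathfrak K)$ is contractible by Proposition~\ref{lem:DJ}, hence so is $(\PP^2,C)$ by Lemma~\ref{lem:sup} and Remark~\ref{rem:bir}. The main obstacle is bookkeeping: verifying that after the elementary transformations the proper transform of the two special lines and of $E$ assemble into a divisor of the form $\varepsilon E+D'$ with $D'\in|E+dF|$ plus vertical components, i.e.\ that the total intersection with a fibre is at most the section-class intersection number, which is a finite check using the intersection numbers $L_i\cdot L_j=1$ among the $d$ lines.
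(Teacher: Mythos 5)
Your argument goes off the rails at the genus computation: the quantity $\binom{d-1}{2}-\binom{d-2}{2}-(2d-3)=1-d$ is not $p_a(C)$ but the arithmetic genus of the \emph{normalization} of $C$ (a disjoint union of $d$ lines), which is negative simply because the normalization is disconnected. The plane curve $C$ itself has $p_a(C)=\binom{d-1}{2}\geqslant 0$, and it is of course connected, since any two lines in $\PP^2$ meet; so your conclusion that $C$ is disconnected for $d\geqslant 4$ is false, and the adjunction argument built on ``$p_a(C)<0$'' (including the claim $|K_{\PP^2}+C|=\emptyset$, which fails because $K_{\PP^2}+C$ has degree $d-3\geqslant 1$) collapses. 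More importantly, the strategy of applying Theorem \ref{thm:cool} directly to $(\PP^2,C)$ cannot work at all: $\kod(\PP^2,C)=2$ for $d\geqslant 4$. The entire content of the proof is the choice of a birational model on which the log Kodaira dimension becomes $-\infty$ while the divisor stays connected: one blows up $P_0$ \emph{and} the $d-2$ nodes $P_{i,d-1}$ lying on one of the two lines not through $P_0$. On that surface $S$ the strict transform $D$ is still connected, and $|m(D+K_S)|$ is the strict transform of the system of plane curves of degree $m(d-3)$ with multiplicity $m(d-3)$ at $P_0$ and $m$ at the $d-2$ chosen nodes; such a curve splits into lines through $P_0$ and must contain each $L_i$, $i\leqslant d-2$, with multiplicity $m$, of total degree $m(d-2)>m(d-3)$, so the system is empty, and Theorem \ref{thm:cool} applies. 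Your proposal never produces any model on which $\kod=-\infty$ holds, which is the key step.

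Your two fallbacks do not close this gap. Invoking the quadratic-transformation induction of \cite[Proposition 4.9]{CC} is precisely the old proof this proposition is meant to replace, and you do not carry it out. The $\FF_n$/De Jonqui\`eres sketch has the right flavour but is incorrect as written: after blowing up $P_0$, the $d-2$ lines through $P_0$ become fibres, but \emph{both} residual lines become sections in $|E+F|$ on $\FF_1$ (neither is a fibre), so the image of $C$ has class $2E+dF$, which is not of the shape $\varepsilon E+D'$ with $\varepsilon\leqslant 1$, $D'\in|E+dF|$ required by Proposition \ref{lem:DJ}; consistently, $K+D\equiv(d-3)F$ is effective at this stage. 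The sketch can be repaired, but only by making the same kind of choice the paper makes: perform the $d-2$ elementary transformations based exactly at the nodes $P_{i,d-1}$ on one of the two residual lines, so that the fibres through them are contracted and that line becomes the negative section of $\FF_{d-3}$; the image of $C$ is then $E+D'$ with $D'\in|E+(d-2)F|$, and Proposition \ref{lem:DJ}, applied with the cluster recording the contracted components, finishes. That choice of centers is exactly what you defer to ``bookkeeping,'' so as it stands the proposal has a genuine gap.
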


\begin{proof}
Denote by $L_1,\ldots,L_{d-2}$ the lines through $P_0$ and let $L_{d-1}, L_d$ be the other two lines.
Set $P_{i,j}=L_i\cap L_j$ for $i\ne j$.
Blow up 
\[
\text{
$P_0$, $P_{1,d-1}$, $P_{2,d-1}$, \dots, $P_{d-2,d-1}$.
}
\]
Denoting by $\tilde L_i$ the strict transform of $L_i$, $i=1,\ldots,d$, on the blown-up surface $S$, it follows that
\[
\text{
$\tilde L_1^2=\tilde L_2^2=\cdots=\tilde L_{d-2}^2=-1$, \,\, $\tilde L_{d-1}^2=3-d$, \,\, $\tilde L_d^2=1$
}
\]
and $\tilde L_1,\ldots,\tilde L_d$ meet as in the following picture
\[
\begin{tikzpicture}[scale=1.0]
    \draw (7,0) -- (0,0) node[left] {$\tilde L_d$};
    \draw (1,-1) -- (1,2) node[above] {$\tilde L_1$};
    \draw (2,-1) -- (2,2) node[above] {$\tilde L_2$};
    \draw (3,2) node[above] {$\cdots$};
    \draw (4,-1) -- (4,2) node[above] {$\tilde L_{d-3}$};
    \draw (5,-1) -- (5,2) node[above] {$\tilde L_{d-2}$};
    \draw (6,-1) -- (6,2) node[above] {$\tilde L_{d-1}$};
\end{tikzpicture}
\]
Therefore, $D=\tilde L_1\cup\cdots\cup\tilde L_d$ is connected and  $D$ belongs to the strict transform on $S$ of  the linear system of plane curves of degree $d$ with multiplicity at least $d-2$ at $P_0$ and multiplicity at least 2 at the $d-2$ points $P_{1,d-1}$, $P_{2,d-1}$, \dots, $P_{d-2,d-1}$, therefore $|m(D+K_S)|$ is the strict transform of the linear system of plane curves of degree $md-3m$ with multiplicity $md-3m$ at $P_0$ and multiplicy $m$ at each one of the $d-2$ points $P_{1,d-1}$, $P_{2,d-1}$, \dots, $P_{d-2,d-1}$, that is empty for each $m>0$.
This says that the hypotheses of Theorem \ref{thm:cool} are satisfied. Hence, $(S,D)$ is contractible.
\end{proof}

According to Proposition 4.11 in \cite{CC}, a union of $d\ge9$ distinct lines in $\PP^2$ with a point of multiplicity $d-3$ and $3(d-2)$ nodes is not contractible to a set of points by a plane Cremona transformation. The next proposition shows that, somehow unexpectedly, such a configuration of lines is instead contractible to a set of points by a plane Cremona transformation when $d\le8$.

\begin{prop}
If $C$ is a union of $d\le8$ distinct lines with a point $P_0$ of multiplicity $d-3$ and $3(d-2)$ nodes, then there exists a plane Cremona transformation which contracts $C$ to a set of points.
\end{prop}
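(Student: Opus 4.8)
The plan is to reduce, for each value $d \le 8$, the configuration to a situation where Theorem~\ref{thm:cool} applies, exactly as in the previous proposition. The key difference from the $d \ge 9$ case is that here $\kod$ of the associated pair should turn out to be $-\infty$, whereas it is non-negative for large $d$; the arithmetic of the linear system $|m(D+K_S)|$ is what flips. So the first step is to perform a sequence of blow-ups at the singular points of $C$ chosen so that the strict transform $D$ of $C$ on the blown-up surface $S$ is connected, and then to compute, degree by degree, the linear system $|m(D+K_S)|$ and check it is empty for all $m \ge 1$.

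\textbf{Key steps.} First, I would name the lines: $L_1, \dots, L_{d-3}$ through $P_0$ and $L_{d-2}, L_{d-1}, L_d$ the remaining three in general position, with nodes at the $P_{i,j} = L_i \cap L_j$. As in the proof of the previous proposition, I would blow up $P_0$ together with a carefully selected subset of the nodes — enough of them that the strict transforms $\tilde L_i$ form a \emph{connected} curve $D = \tilde L_1 \cup \cdots \cup \tilde L_d$ on $S$, and no more, so that the self-intersections stay as large as possible. (A natural choice: blow up $P_0$ and all nodes lying on one of the three ``extra'' lines, say $L_{d-1}$, to link the pencil of lines through $P_0$ to the triangle $L_{d-2}L_{d-1}L_d$.) Second, I would identify $D$ as lying in the strict transform of the system of plane curves of degree $d$ with prescribed multiplicities at $P_0$ and at the blown-up nodes, so that $m(D+K_S)$ is the strict transform of the system of plane curves of degree $md - 3m$ with multiplicity $md-3m$ at $P_0$ and multiplicity $m$ at each blown-up node. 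Third — and this is the crux for $d \le 8$ — I would check that this linear system is empty for every $m \ge 1$: a plane curve of degree $m(d-3)$ having a point of multiplicity $m(d-3)$ is forced to be a union of lines through $P_0$, and the further multiplicity-$m$ conditions at the nodes (which do not lie on lines through $P_0$, after suitable choices) then leave no such curve. Concluding: the hypotheses of Theorem~\ref{thm:cool} are met, so $(S,D)$ is contractible, hence so is $(\PP^2, C)$.

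\textbf{Main obstacle.} The delicate point is the emptiness of $|m(D+K_S)|$ for all $m \ge 1$ \emph{simultaneously with} connectedness of $D$. For $d \ge 9$ these two demands are incompatible — connecting up $D$ forces enough blow-ups that $K_S + D$ becomes effective — and indeed the configuration is then not contractible by Cremona transformations. So the bookkeeping must be tight: I expect one needs to blow up \emph{exactly} $P_0$ together with $d-3$ nodes (one chain linking the star of lines to the triangle) and verify that with this count, $(D+K_S)\cdot (\text{lines through }P_0) < 0$ while $D$ is still connected, which is precisely what fails once $d-3$ grows past the size of the triangle's contribution. A residual subtlety is that the choice of which nodes to blow up must avoid creating a curve of negative self-intersection among the $\tilde L_i$ that would offend smallness in a way not handled by the reduction; but since Theorem~\ref{thm:cool} itself passes to a small model, this is not a genuine obstruction, only a matter of presenting the picture cleanly.
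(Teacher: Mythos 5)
Your overall strategy (blow up $P_0$ and a suitable set of nodes, keep $D$ connected, check $\kod(S,D)=-\infty$, invoke Theorem~\ref{thm:cool}) is the right frame, but the two steps you flag as the crux are wrong as stated. First, the numerics: since $C$ has multiplicity $d-3$ at $P_0$, the system $|m(D+K_S)|$ is the strict transform of plane curves of degree $m(d-3)$ with multiplicity $m(d-4)$ (not $m(d-3)$) at $P_0$, plus multiplicity $m$ at the blown-up nodes; you carried over the formula from the multiplicity-$(d-2)$ proposition. So for $d=8$ one is looking at quintics ($m=1$) with a point of multiplicity $4$, which are emphatically not forced to be unions of lines through $P_0$, and your one-line emptiness argument collapses. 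Second, your ``natural choice'' of nodes — $P_0$ together with all nodes on a single extra line, say $L_{d-1}$ — does not even give $\kod(S,D)=-\infty$: with that choice $|D+K_S|$ is the system of quintics with multiplicity $\ge 4$ at $P_0$ through the seven nodes on $L_{d-1}$, and the reducible quintic $L_{d-1}+L_{i_1}+L_{i_2}+L_{i_3}+L_{i_4}$ (four of the lines through $P_0$) is a member, so $P_1(S,D)>0$ and Theorem~\ref{thm:cool} cannot be applied. (There is also a conceptual slip: blowing up nodes never ``links'' components — $D$ is connected to begin with and blow-ups at nodes can only separate branches; the nodes are blown up to impose base conditions killing the adjoints, and the constraint is to do this without disconnecting $D$.)

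What is actually needed, and what the paper does, is a much more delicate choice: for $d=8$ one blows up $P_0$ and eleven specific nodes distributed over all three extra lines, namely $P_{1,7}$, $P_{1,8}$, $P_{2,7}$, $P_{3,6}$, $P_{4,6}$, $P_{4,8}$, $P_{5,6}$, $P_{5,8}$, $P_{6,7}$, $P_{6,8}$, $P_{7,8}$, checks that $D$ remains connected, and then proves emptiness of the system of degree-$5m$ curves with multiplicity $4m$ at $P_0$ and $m$ at these eleven points by an iterated splitting-off argument: components $\tilde L_i$ with $(D+K_S)\cdot\tilde L_i=-1$ split off with fractional multiplicities ($\tfrac m2$, $\tfrac m6$, $\tfrac m{12}$, \dots), and after several rounds the residual degree becomes negative; the full bookkeeping is nontrivial (the authors even defer it to a script). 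So the gap in your proposal is not presentational: without the correct multiplicity at $P_0$, a correct (and rather special) choice of which eleven nodes to blow up, and a genuine argument for emptiness for all $m\ge1$, the reduction to Theorem~\ref{thm:cool} does not go through.
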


\begin{proof}
It suffices to show the assertion for $d=8$.
Denote by $L_1,\ldots,L_5$ the lines through the point $P_0$ of multiplicity $m_0=5$
and by $L_6,L_7,L_8$ the other three lines.
Set $P_{i,j}=L_i\cap L_j$ for $i\ne j$.
Blow up $P_0$ and
\begin{equation}\label{points}
\text{
$P_{1,7}$, $P_{1,8}$, $P_{2,7}$, $P_{3,6}$, $P_{4,6}$, $P_{4,8}$, $P_{5,6}$, $P_{5,8}$, $P_{6,7}$, $P_{6,8}$ and $P_{7,8}$.
}
\end{equation}
Denoting by $\tilde L_i$ the strict transform of $L_i$, $i=1,\ldots,8$, it follows that
\[
\text{
$\tilde L_1^2=\tilde L_4^2=\tilde L_5^2=-2$, $\tilde L_2^2=\tilde L_3^2=-1$, $\tilde L_6^2=\tilde L_8^2=-4$,  $\tilde L_7^2=-3$
}
\]
and $\tilde L_1,\ldots,\tilde L_8$ meet as in the following picture
\[
\begin{tikzpicture}[scale=1.0]
    \draw (8,0) -- (2,0) node[left] {$\tilde L_8$};
    \draw (4,2) -- (0,2) node[left] {$\tilde L_6$};
    \draw (1,1) -- (1,3) node[above] {$\tilde L_1$};
    \draw (3,-1) -- (3,3) node[above] {$\tilde L_2$};
    \draw (7,-1) -- (7,3) node[above] {$\tilde L_3$};
    \draw (9,1) -- (9,3) node[above] {$\tilde L_4$};
    \draw (11,1) -- (11,3) node[above] {$\tilde L_5$};
    \draw (6,2) -- (12,2) node[right] {$\tilde L_7$};
\end{tikzpicture}
\]
Therefore, $D=\tilde L_1\cup\cdots\cup\tilde L_8$ is connected and $D$ sits in the strict transform on $S$ of the linear system of plane curves of degree 8 with multiplicity at least 5 at $P_0$ and multiplicity at least 2 at the eleven points \eqref{points}.
Hence $|m(D+K_S)|$, with $m$ a positive integer, is the strict transform of the linear system $\mathcal L_m$ of plane curves of degree $5m$ with multiplicity at least $4m$ at $P_0$ and multiplicity at least $m$ at the eleven points \eqref{points}. We claim that the system $\mathcal L_m$ is empty for all positive integers $m$, proving that $\kod(S,D)=-\infty$ hence that $(S,D)$ is contractible. To prove the claim (it suffices to assume $m$  divisible enough), we notice that $(D+K_S)\cdot \tilde L_i=-1$ for $i=1,4,5$. This implies that the lines $L_i$ with $i=1,4,5$ split off with multiplicity $\frac m2$ from $\mathcal L_m$. At this point the line $L_6$ splits off with multiplicity $\frac m 6$. Then the line $L_1$ splits off again with multiplicity $\frac m{12}$, $L_2$ with multiplicity $\frac m6$, and the line $L_{0,7,8}$ joining $P_0$ and $P_{7,8}$ with multiplicity $\frac m6$. This splitting process goes on and after a few steps the residual system becomes empty because its degree becomes negative. We defer the reader who is interested in the full computation to the link \texttt{http://docente.unife.it/alberto.calabri/adjoint1.txt}, containing a script in Pari/GP that runs this splitting process.
\end{proof}

\section{Basics of the theory of peeling} \label {sec:peel}
 
 In this section we recall, for the reader's convenience, the basics  of the  theory of peeling, referring to \cite {M,MT} for the proofs of the results we will mention and use. 
 
\subsection{A few definitions}\label{ssec:def} Let $(S,D)$ be a pair with $D$ reduced. We denote by $M(D)$ the \emph {intersection matrix} of $D$, i.e., the symmetric matrix of order $\ell=\ell(D)$ with entries $m_{ij}=D_i\cdot D_j$, for $1\leqslant i, j\leqslant \ell$. To say that $M(D)$ is  negative definite [resp. semidefinite], we write $M(D)<0$ [resp $M(D)\leqslant 0$]. The matrix $M(D)$ depends on the ordering of the components of $D$, but its rank and its being negative definite (or semidefinite) do not. 

For any curve $C\leqslant D$, one defines its \emph{branching number} in $D$ as $\beta_D(C)=C\cdot (D-C)$. If $C$ is a component of $D$, this is the valence of the vertex corresponding to $C$ in $\mathfrak G(C)$. 

A tree $D$ is a  \emph{chain} if each vertex of $\mathfrak G(D)$ has valency 1 or 2 (equivalently, $D$ is a tree with only two terminal components).  Given $(S,D)$, a chain $C\leqslant D$ which is  a connected component of $D$ is called a  \emph{rod} of $D$. 
A chain $T\leqslant D$ is called a \emph{twig} of $D$ if $\beta_D(T)=1$ and $T$ meets
$D-T$ in a point of a terminal component of $T$. 
A twig of $D$  is \emph{maximal} if it cannot be extended to a twig of $D$ with more components.  

One says that $C\leqslant D$  is a \emph{fork} if $C$ is a connected component of $D$, if all components of $C$ are smooth and rational and if $\mathfrak G(C)$ is one of the graphs in \cite [pp. 436--437] {MT}, see also \cite [Lemma 3.4.1]{M}.

A curve $D$ is said to be \emph{admissible} if all of its components $C$ are smooth, rational with $C^ 2\leqslant -2$. A smooth rational component $C$ of $D$ with $C^ 2\geqslant -1$ is called \emph{irrelevant}.

\subsection{Peeling the bark and almost minimal models}\label{ssec:amin} From now on we will consider log smooth pairs $(S,D)$ with $S$ rational and all components of $D$ rational.

Let $C\leqslant  D$ be an admissible twig, rod or fork, with $\ell(C)=\ell$ and write $C=C_1+\ldots +C_\ell$ as the sum of its irreducible components. Then  one has $M(C)<0$ and one can uniquely determine an effective $\mathbb Q$--divisor 
\[{\rm Bk}(C)=\gamma_1C_1+\ldots+\gamma_\ell C_\ell,\]
called the \emph{bark} of $C$, such that
\[
(D-{\rm Bk}(C)+K_S)\cdot C_i=0,\,\, \text{for}\,\, 1\leqslant i\leqslant \ell.
\]
One has  
\begin{equation}\label{eq:gamma}
0<\gamma_i\leqslant 1,\,\, \text{for}\,\, 1\leqslant i\leqslant \ell
\end{equation}

\begin{property}\label{proper:gamma} The 
 equality on the right in \eqref {eq:gamma} holds for some $i\in \{1,\ldots,\ell\}$ if and only if $\gamma_i=1$ for all $i\in \{1,\ldots,\ell\}$, in which case $C$ is either a rod or a fork and $C_i^ 2=-2$, for all $i\in \{1,\ldots,\ell\}$. In this case we say that $C$ is a \emph{$(-2)$--rod}  or a  \emph{$(-2)$--fork}. 
 \end{property}
 
The  process of subtracting ${\rm Bk}(C)$ from $D$ is called the \emph{peeling} of  ${\rm Bk}(C)$ out of $D$.

Consider the sets $\{T_1,\ldots,T_t\}$,  $\{R_1,\ldots,R_r\}$ and $\{F_1,\ldots F_f\}$ of maximal admissible twigs, of admissible rods and of admissible forks respectively.  These curves are all pairwise disjoint, so we can peel their barks independently out of $D$, and we obtain 
\[
D=D^ \sharp+ {\rm Bk}(D),\,\, \text {where}\,\, {\rm Bk}(D):=\sum_{i=1}^ t{\rm Bk}(T_i)+\sum_{i=1}^ r{\rm Bk}(R_i)+\sum_{i=1}^ f{\rm Bk}(F_i)\,\, \text{is called the \emph{bark} of $D$}.
\] 
\begin{property}\label{property:bark} One has:\\
\begin{inparaenum}
\item [(i)] $D^ \sharp$ is an effective (perhaps $0$) $\mathbb Q$--divisor and ${\rm Supp}({\rm Bk}(D))$ contains no smooth rational curve $C$ with $C^ 2\geqslant -1$;\\
\item [(ii)] $M({\rm Bk}(D))<0$;\\ 
\item [(iii)] 
\[
(K_S+D^ \sharp)\cdot C= 0, \,\,\, \text {for all components $C$ of} \,\,\, {\rm Supp}({\rm Bk}(D)),
\]
whereas
\[
(K_S+D^ \sharp)\cdot C\geqslant 0, \,\,\, \text {for all components $C$ of} \,\,\, D-{\rm Supp}({\rm Bk}(D)),
\]
except for irrelevant components of non--admissible twigs, rods and  forks;\\
\item [(iv)] 
\[
h^ 0(S, \mathcal O_S(n(D+K_S))=h^ 0(S, \mathcal O_S([n(D^ \sharp+K_S)]),\,\,\, \text{for every integer}\,\,\, n\geqslant 0,
\]
where $[\,\,\,]$ denotes the \emph {integral part}.
\end{inparaenum}

 \end{property}

From
\[
D+K_S=(D^ \sharp+K_S)+{\rm Bk}(D),
\]
and from the fact that ${\rm Bk}(D)$ is effective and that nef divisors are pseudo--effective, one has that 
\begin{equation}\label{eq:nef}
 D^ \sharp+K_S \,\,\, \text{nef}\,\,\, \Longrightarrow \,\,\, \kod(S,D)\geqslant 0.
\end{equation}

The pair $(S,D)$ is said to be \emph{almost minimal}
if, for every irreducible curve $C$ on $S$, either $(D^ \sharp + K) \cdot C \geqslant 0$  or $(D^ \sharp + K) \cdot C < 0$ and  $M(C + {\rm Bk}(D))$ is not negative definite. 

\begin{property}\label{proper:off} A curve $C$ which offends almost minimality of $(S,D)$ is a $(-1)$--curve which can be contracted without offending log smoothness. 
\end{property}

Hence one has:

\begin{theorem}\label{thm:am} Let $(S,D)$ be log smooth, with $S$ rational and all components of $D$ rational. There is  a log smooth, almost minimal pair $(\tilde S, \tilde D)$, such that:\begin{inparaenum}\\
\item [(i)] there is a birational morphism $\mu: S\to \tilde S$ such that $\tilde D=\mu_*(D)$;\\
\item [(ii)] 
\[
P_n(S,D)=P_n(\tilde S,\tilde D),\,\,\, \text{for every integer}\,\,\, n\geqslant 0,
\]
in particular
\[
\kod (S,D)=\kod(\tilde S, \tilde D).
\]
\end{inparaenum}

\end{theorem}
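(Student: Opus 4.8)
The plan is to prove the theorem by an iterated contraction of offending $(-1)$-curves, taking Property~\ref{proper:off} as the basic step and controlling plurigenera by means of Remark~\ref{rem:mor}.

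First I would set up the recursion. If $(S,D)$ is already almost minimal there is nothing to prove ($\mu=\mathrm{id}$). Otherwise, by definition there is an irreducible curve $C$ on $S$ with $(D^\sharp+K_S)\cdot C<0$ and $M(C+{\rm Bk}(D))$ negative definite; by Property~\ref{proper:off} this $C$ is a $(-1)$-curve and its contraction $\pi\colon S\to S_1$ does not offend log smoothness. Set $D_1=\pi_*(D)$. Then $S_1$ is rational, $(S_1,D_1)$ is log smooth, and every component of $D_1$ is rational, being the image of a rational component of $D$; so the hypotheses of the theorem are preserved and the construction may be repeated. Since $\rho(S_1)=\rho(S)-1$ and Picard numbers are bounded below, after finitely many steps one reaches a log smooth almost minimal pair $(\tilde S,\tilde D)$. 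Composing the contractions gives a birational morphism $\mu\colon S\to\tilde S$ with $\mu_*(D)=\tilde D$, which is assertion~(i).

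The heart of the matter is that each single contraction preserves all plurigenera, $P_n(S,D)=P_n(S_1,D_1)$ for all $n\ge0$; iterating this yields assertion~(ii), and the equality $\kod(S,D)=\kod(\tilde S,\tilde D)$ then follows from the definition of log Kodaira dimension. Writing $D=\pi^*(D_1)-aC$ with $a=D\cdot C$, Remark~\ref{rem:mor} gives $P_n(S,D)=P_n(S_1,D_1)$ as soon as $0\le a\le 1$, so it suffices to check that an offending $(-1)$-curve satisfies $D\cdot C\le 1$. If $C\not\le D$, were $D\cdot C\ge 2$, the inequality $(D^\sharp+K_S)\cdot C<0$ together with the effectivity of ${\rm Bk}(D)=D-D^\sharp$ would force $C$ to meet ${\rm Supp}({\rm Bk}(D))$ with total intersection at least $2$, and a short case analysis (a $(-1)$-vertex attached either doubly to one admissible twig, rod or fork, or simply to two of them, in each case either closing a cycle or sitting in the interior of a chain) shows that $M(C+{\rm Bk}(D))$ then fails to be negative definite, contradicting the choice of $C$. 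If $C\le D$, write $D=C+D'$ with $C\not\le D'$; then $a=D\cdot C=D'\cdot C-1$, while log smoothness of $D_1$ bounds the multiplicity $D'\cdot C$ of $D_1$ at the image point of $C$ by $2$, so again $a\le 1$. In either case Remark~\ref{rem:mor} applies. As an alternative to this bookkeeping one may invoke directly the behaviour of the bark under contraction of an offending curve from \cite{MT,M}.

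I expect the delicate point to be precisely the verification that $D\cdot C\le 1$ for an offending curve $C$: one must run through the ways such a $C$ can be incident to the maximal admissible twigs, rods and forks carrying the bark, and see why any larger incidence destroys the negative definiteness of $M(C+{\rm Bk}(D))$. Everything else — termination by descending Picard number, functoriality of $\mu_*$, and the passage from equality of all $P_n$ to equality of $\kod$ — is routine.
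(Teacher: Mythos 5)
The heart of your argument is the claim that an offending curve $C$ with $C\not\leqslant D$ must satisfy $D\cdot C\leqslant 1$, and this claim is false, so the stepwise application of Remark \ref{rem:mor} does not go through. Concretely: in $\PP^2$ take general lines $Y_0,T_0,A_0,C_0$; blow up $A_0\cap Y_0$, $A_0\cap T_0$, $C_0\cap Y_0$, two further general points on $T_0$, two on $A_0$ and one on $C_0$. On the resulting rational surface let $D=Y+T+A$ (strict transforms), which is log smooth with $Y^2=-1$, $T^2=-2$, $A^2=-3$, $T\cdot Y=1$ and $A$ disjoint from $Y+T$, and let $C$ be the strict transform of $C_0$, a $(-1)$--curve with $C\cdot T=C\cdot A=1$, $C\cdot Y=0$, $C\not\leqslant D$. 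Here $T$ is a maximal admissible twig with bark coefficient $1/2$ and $A$ an admissible rod with bark coefficient $2/3$, so $(D^\sharp+K_S)\cdot C=2-7/6-1=-1/6<0$, while $M(C+{\rm Bk}(D))$ is (up to further disjoint negative definite blocks) the chain $T,C,A$ with self--intersections $-2,-1,-3$, whose leading principal minors are $-2,\,1,\,-1$: it \emph{is} negative definite (indeed this chain contracts to a smooth point). Thus $C$ offends almost minimality although $D\cdot C=2$; your ``short case analysis'' is wrong because a $(-1)$--vertex joining two admissible pieces need not destroy negative definiteness. Contracting such a $C$ creates a node of the image of $D$, the exponent $a$ in Remark \ref{rem:mor} equals $2$, and that remark then only gives $P_n(S,D)\leqslant P_n(S_1,D_1)$, not equality.

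This is precisely the point where the genuine content of Theorem \ref{thm:am} lies: equality of the plurigenera across such a contraction is obtained by comparing $D^\sharp+K_S$ with $D_1^\sharp+K_{S_1}$ and using Property \ref{property:bark}, (iv), i.e.\ by tracking how the bark changes under the contraction, not by the multiplicity bookkeeping of Remark \ref{rem:mor}; your closing remark that one may ``invoke directly the behaviour of the bark under contraction from \cite{MT,M}'' in effect concedes the whole of assertion (ii). Note also that the paper does not reprove this theorem: \S \ref{sec:peel} explicitly recalls it from \cite{M,MT}, deducing only the existence of the model from Property \ref{proper:off}, so your iteration/termination scheme for part (i) agrees with the paper, but your proof of part (ii) has a real gap. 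A much smaller separate point: when the offending curve is an isolated $(-1)$--component of $D$ your $a=D\cdot C=-1$ falls outside the range $0\leqslant a\leqslant 1$ of Remark \ref{rem:mor} (the fixed--part argument in the proof of Lemma \ref{lem:mor} still covers it), but the essential failure is the one described above.
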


The pair $(\tilde S, \tilde D)$ of Theorem \ref {thm:am} is called an \emph{almost minimal model} of $(S,D)$. 

As at the end of \S \ref {ssec:small}, one can start with a marked triple $(S,D,\mathfrak K)$ with $(S,D)$ log smooth.  If $(S,D)$ is almost minimal, then 
$(S,D,\mathfrak K)$ will be also said to be \emph{almost minimal}. If  $(\tilde S,\tilde D)$ is an almost minimal model of $(S,D)$, with $\mu: S\to \tilde S$, we define $\tilde {\mathfrak K}$ to be the union of  ${\rm cl}_\mu(\mathfrak K)$ and of $\mathfrak K_\mu$ (recall Remark \ref {rem:aa} and \S \ref {ss:mark}). The almost minimal triple $(\tilde S,\tilde D,\tilde  {\mathfrak K})$ will be said to be  an \emph{almost minimal model} of  $(S,D,\mathfrak K)$.

\subsection{The bark contraction}\label{ssec:barkc}

By Property \ref {property:bark}, (ii) and by \cite [Lemma 5.2.3, Chapt. 1]{M}, there is a birational morphism $\phi: S\to \bar S$, such that:\\
\begin{inparaenum}
\item [(i)]  $\bar S$ is a normal projective surface, with rational singular points, $\phi$ induces an isomorphism between $S-{\rm Supp}({\rm Bk}(D))$ and $\bar S-{\rm Sing}(\bar S)$, and each singular point of $\bar S$ corresponds bijectively to a connected component of ${\rm Supp}({\rm Bk}(D))$, which is contracted there;\\
\item [(ii)]  there is a positive integer $N$ such that for any Weil divisor $Z$ of $\bar S$, $NZ$ is a Cartier divisor. For any Weil divisor $Z$ of $\bar S$, one defines the numerical equivalence class 
\[
\phi^ *(Z):=\frac {\phi^ *(NZ)}N\in {\rm NS}(S)\otimes _\mathbb Z\mathbb Q
\]
which is independent of $N$, and the intersection of Weil divisors $Z,Z'$ of $\bar S$  as
\[
Z\cdot Z'=\frac {\phi^ *(Z)\cdot \phi^ *(Z')}{N^ 2}\in \mathbb Z \frac 1{N^ 2}.
\]
Thus, the \emph{N\'eron--Severi} $\mathbb Q$--vector space ${\rm NS}(\bar S)$, consisting of the numerical equivalence classes of Weil $\mathbb Q$--divisors on $\bar S$, is well defined, and its rank is, as usual, denoted by $\rho(\bar S)$;\\
\item [(iii)]  one has
\begin{equation}\label{eq:1}
K_{\bar S}= \phi_*(K_S),\,\,\, D^ \sharp +K_S=\phi^ *(\bar D+K_{\bar S}), \,\,\, \text{where}\,\,\, \bar D:=\phi_*(D^ \sharp). 
\end{equation}
\end{inparaenum}

The pair $(\bar S, \bar D)$ is called the \emph{bark-contraction} of $(S,D)$. 

Mori theory can be developed on $\bar S$. Let $\overline {\rm NE}(\bar S)\subset 
{\rm NS}(\bar S)\otimes_\mathbb Q \mathbb R$ be the \emph {Mori cone}, i.e., the smallest convex cone (closed under multiplication by $\mathbb R_+$) containing the classes of all irreducible curves on $\bar S$. Let $\bar L$ be an ample Cartier divisor on $\bar S$. For any positive $\epsilon\in \mathbb R$, define
\[
\overline {\rm NE}_\epsilon(\bar D,\bar S):=\{Z\in \overline {\rm NE}(\bar S): (\bar D+K_{\bar S})\cdot Z\geqslant -\epsilon (\bar L\cdot Z)\}.
\]
One has:
\begin{theorem}[The Cone Theorem]\label{thm:cone} For any positive $\epsilon\in \mathbb R$, there are (not necessarily smooth) rational curves $\bar \ell_1,\ldots, \bar \ell_ r$ on $\bar S$, such that
\begin{equation}\label{eq:cone}
\overline {\rm NE}(\bar S)=\sum_{i=1}^ r \mathbb R_+[\bar \ell_i]+\overline {\rm NE}_\epsilon(\bar D,\bar S)
\end{equation}
and
\[
0>(\bar D+K_{\bar S})\cdot \bar \ell_i\geqslant -3,\,\,\, \text {for}\,\,\, 1\leqslant i\leqslant r.
\]
\end{theorem}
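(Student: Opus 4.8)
The statement is the surface case of the logarithmic Cone Theorem for the pair $(\bar S,\bar D)$, and I plan to deduce it from the classical Cone Theorem on the smooth surface $S$ by means of the bark contraction $\phi\colon S\to\bar S$. The crucial input is the middle identity of \eqref{eq:1}, $\phi^*(\bar D+K_{\bar S})=D^\sharp+K_S$, which says that $\phi$ is \emph{crepant} for the log divisor. Since $\phi_*$ and $\phi^*$ are adjoint for the intersection pairing, the subcone $\phi_*^{-1}\big(\overline{\rm NE}_\epsilon(\bar D,\bar S)\big)$ of $\overline{\rm NE}(S)$ is exactly $\{z:(D^\sharp+K_S)\cdot z\ge -\epsilon\,(\phi^*\bar L)\cdot z\}$; and since $\phi_*$ maps $\overline{\rm NE}(S)$ onto $\overline{\rm NE}(\bar S)$, killing precisely the classes of the components of ${\rm Bk}(D)$, which by Property \ref{property:bark}(iii) satisfy $(D^\sharp+K_S)\cdot C=0$ and hence lie in that subcone, it suffices to decompose $\overline{\rm NE}(S)$ and to push the decomposition forward by $\phi_*$. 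What has to be shown, then, is that the complement of $\phi_*^{-1}\big(\overline{\rm NE}_\epsilon(\bar D,\bar S)\big)$ in $\overline{\rm NE}(S)$ is cut off by finitely many extremal rays, each spanned by a rational curve of controlled $(D^\sharp+K_S)$-degree.

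The argument on $S$ would be the classical one, which in dimension two requires no reduction modulo $p$ (see \cite{M}): by Riemann--Roch and the Hodge index theorem the set of extremal rays of $\overline{\rm NE}(S)$ along which $D^\sharp+K_S$ is negative is locally finite away from the wall $\{z:(D^\sharp+K_S)\cdot z\ge 0\}$, so that only finitely many of them, say $R_1,\dots,R_r$, meet the open region $\{z:(D^\sharp+K_S)\cdot z<-\epsilon\,(\phi^*\bar L)\cdot z\}$, and there one has $\overline{\rm NE}(S)=\sum_{i=1}^r R_i+\phi_*^{-1}\big(\overline{\rm NE}_\epsilon(\bar D,\bar S)\big)$. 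By bend-and-break applied to a curve in $R_i$, the ray $R_i$ is spanned by a rational curve $\tilde\ell_i$ on $S$; its image $\bar\ell_i:=\phi_*(\tilde\ell_i)$ then spans $\phi_*R_i$ and, by the surface-specific estimate for the length of a $(D^\sharp+K_S)$-negative extremal ray — realized, in the three possible contraction types, by a line on $\PP^2$, by a ruling fibre, and by a $(-1)$-curve, and sharper than the bound $2\dim$ of the general theory — satisfies $0>(\bar D+K_{\bar S})\cdot\bar\ell_i\ge -3$. Applying $\phi_*$ to the above decomposition finally yields \eqref{eq:cone}.

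The step I expect to be the main obstacle is the bend-and-break part together with the sharp bound $-3$: one must check that deforming a $(D^\sharp+K_S)$-negative curve on $S$ genuinely breaks off a rational component of the asserted $(D^\sharp+K_S)$-degree, and that the singularities of $\bar S$ do not get in the way — they do not, precisely because $\phi$ is crepant for $D^\sharp+K_S$ and contracts only curves orthogonal to it, so every intersection number in play is computed on the smooth surface $S$. Everything else — surjectivity of $\phi_*$ on Mori cones, adjointness of $\phi_*$ and $\phi^*$, local finiteness of the negative extremal rays, and the identification of the positive part with $\overline{\rm NE}_\epsilon(\bar D,\bar S)$ — is routine; for all of it I would refer to \cite{M,MT} and to the standard literature on (log) Mori theory. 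One could also skip the reduction to $S$ and invoke the logarithmic Cone Theorem directly on $(\bar S,\bar D)$, a normal projective rational surface with rational singularities and hence within the scope of Mori theory, the passage to $S$ above being merely an expository convenience.
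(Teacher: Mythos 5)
The paper does not prove Theorem \ref{thm:cone} at all: \S\ref{sec:peel} opens by saying that everything in it is recalled from \cite{M,MT}, to which the reader is referred for proofs. So there is no in-paper argument to compare with; the honest benchmark is Miyanishi--Tsunoda's treatment, and your closing remark --- that one may simply invoke the logarithmic Cone Theorem directly on the normal surface $(\bar S,\bar D)$, citing \cite{M,MT} --- is precisely the paper's stance. At that citation level your proposal and the paper coincide.

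If, however, your reduction to the smooth model $S$ is meant as an actual proof, two steps are weaker than ``routine''. First, your $\epsilon$-truncation on $S$ is taken with respect to $\phi^*\bar L$, which is nef and big but not ample (it is trivial on the components of ${\rm Bk}(D)$), whereas finiteness of extremal rays in the truncated region is what the classical Cone Theorem gives for an \emph{ample} polarization; you must either perturb $\phi^*\bar L$ by a small ample class or argue separately that no rays accumulate against the $(D^\sharp+K_S)$-trivial contracted classes. Second, the sharp bound $(\bar D+K_{\bar S})\cdot\bar\ell_i\geqslant-3$ does not follow, as written, from ``the three possible contraction types'' (line in $\PP^2$, ruling fibre, $(-1)$-curve): that classification concerns $K_S$-negative extremal rays, and the region where $D^\sharp+K_S$ is negative is not a priori contained in the $K_S$-negative half-space --- the generator may be a component of ${\rm Supp}(D^\sharp)$ met negatively by $D^\sharp$, in which case the estimate comes from adjunction along the boundary (giving $\geqslant-2$ there) and from the case analysis actually carried out in \cite{MT} and \cite[Chapt.~1]{M}, not from Mori's list of $K$-negative contractions. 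Since you explicitly defer bend-and-break, local finiteness and the length estimate to the literature, the proposal is acceptable as a citation-level justification --- which is all the paper itself offers --- but it is not a self-contained proof.
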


Since $\overline {\rm NE}(\bar S)$ is polyhedral in the half--space where $\bar D+K_{\bar S}+\epsilon \bar L<0$, one can define the concept of \emph{extremal ray} of 
$\overline {\rm NE}(\bar S)$ as in the smooth case. Thus,  if $r>0$ in \eqref {eq:cone}, we may assume that $\mathbb R_+[\bar \ell_i]$ are extremal rays for $1\leqslant i\leqslant r$. 

One has:

\begin {prop}\label{prop:extr} In the above setting, let $\mathbb R_+[\bar \ell]$ be an extremal ray of $\overline {\rm NE}(\bar S)$. Let $\ell$ be the proper transform of $\bar \ell$ on $S$. Then one of the following facts occurs:\\
\begin{inparaenum}
\item [(i)] $M(\ell+{\rm Bk}(D))\leqslant 0$ but not $M(\ell+{\rm Bk}(D))< 0$:  then $\bar \ell^ 2=0$ and, for $n\gg 0$, the linear system $|nN\bar \ell|$ is base point free and composed with a pencil $|f|$ whose general member is isomorphic to $\mathbb P^ 1$. If $\mathfrak f=\phi^ *(f)$, then $|\mathfrak f|$ is a base point free pencil of rational curves on $S$; \\
\item [(ii)] $\overline {\rm NE}(\bar S)\otimes \mathbb Q$ is generated by the class of $\bar \ell$, hence $\rho(\bar S)=1$ and $-(\bar D+K_{\bar S})$ is ample. Then $-(D^ \sharp +K_S)$ is nef and big and for an irreducible curve $C$ of $S$ one has $C\cdot (D^ \sharp +K_S)=0$ if and only if $C$ is an irreducible component of ${\rm Bk}(D)$. 
\end{inparaenum}

\end{prop}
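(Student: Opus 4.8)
The plan is to translate everything across the bark contraction $\phi\colon S\to\bar S$, using that $\phi$ contracts exactly $\mathrm{Supp}(\mathrm{Bk}(D))$, that $M(\mathrm{Bk}(D))<0$ (Property \ref{property:bark}(ii)), and that $D^\sharp+K_S=\phi^*(\bar D+K_{\bar S})$ by \eqref{eq:1}. Write $\mathrm{Bk}(D)=\sum_i\gamma_iC_i$ with $C_1,\dots,C_k$ its components, and let $\ell$ be the proper transform of the irreducible curve $\bar\ell$, so $\phi_*\ell=\bar\ell$ and $\phi^*\bar\ell=\ell+\sum_i b_iC_i$ with the $b_i$ the unique rationals making $\phi^*\bar\ell\cdot C_j=0$ for all $j$. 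By the projection formula $\phi^*\bar\ell\cdot\ell=\bar\ell^2$ and $(D^\sharp+K_S)\cdot\ell=(\bar D+K_{\bar S})\cdot\bar\ell$, and since $\mathbb R_+[\bar\ell]$ is an extremal ray with $(\bar D+K_{\bar S})\cdot\bar\ell<0$ (Theorem \ref{thm:cone}) we get $(D^\sharp+K_S)\cdot\ell<0$. The first step is the following dictionary, a routine computation on the span of $\ell,C_1,\dots,C_k$: writing an arbitrary element as $w=x\,\phi^*\bar\ell+w'$ with $w'$ supported on $\mathrm{Bk}(D)$, one has $w^2=x^2\bar\ell^2+w'^2$ because $\phi^*\bar\ell$ is orthogonal to every $C_i$; since $M(\mathrm{Bk}(D))<0$ this shows $M(\ell+\mathrm{Bk}(D))<0\iff\bar\ell^2<0$, that $M(\ell+\mathrm{Bk}(D))\le0$ but not $<0\iff\bar\ell^2=0$, and that $M(\ell+\mathrm{Bk}(D))$ is not $\le0\iff\bar\ell^2>0$.

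Next, the case $\bar\ell^2<0$ is excluded: by the dictionary it would give $M(\ell+\mathrm{Bk}(D))<0$ together with $(D^\sharp+K_S)\cdot\ell<0$, i.e.\ $\ell$ would offend almost minimality of $(S,D)$, which is excluded since $(S,D)$ is almost minimal (recall Property \ref{proper:off}). So $\bar\ell^2\ge0$. In the case $\bar\ell^2=0$ — which by the dictionary is exactly the case $M(\ell+\mathrm{Bk}(D))\le0$, not $<0$, so (i) holds — we must have $\rho(\bar S)\ge2$ (a projective surface of Picard number one carries no nonzero class of square zero). The contraction of $\mathbb R_+[\bar\ell]$, provided by Theorem \ref{thm:cone} and the associated contraction theorem for $\bar S$ (see \cite{M}), is then a morphism $\psi\colon\bar S\to J$ onto a smooth curve with connected fibres numerically proportional to $\bar\ell$ and $-(\bar D+K_{\bar S})$ relatively ample. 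The general fibre $f$ avoids $\mathrm{Sing}(\bar S)$, so adjunction together with $K_{\bar S}\cdot f\le(\bar D+K_{\bar S})\cdot f<0$ gives $f\cong\PP^1$; for $n\gg0$ the system $|nN\bar\ell|$ is base point free (being the pullback of a sufficiently positive base point free system on $J$) and composed with the pencil $|f|$. Finally a general member of $|f|$ is disjoint from $\mathrm{Sing}(\bar S)$, hence $\phi$ is an isomorphism near it, so $\mathfrak f=\phi^*(f)$ is irreducible and $|\mathfrak f|$ is a base point free pencil of rational curves on $S$, which is case (i).

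In the remaining case $\bar\ell^2>0$, the dictionary says $M(\ell+\mathrm{Bk}(D))$ is not $\le0$; moreover a class of positive square on a projective surface forces $\rho(\bar S)=1$ (otherwise $\bar\ell$ would be an interior point of $\overline{\mathrm{NE}}(\bar S)$, contradicting extremality). Then $\overline{\mathrm{NE}}(\bar S)\otimes\mathbb Q=\mathbb R_+[\bar\ell]$, and since $(\bar D+K_{\bar S})\cdot\bar\ell<0$ the class $-(\bar D+K_{\bar S})$ is strictly positive on $\overline{\mathrm{NE}}(\bar S)\setminus\{0\}$, hence ample by Kleiman's criterion (applicable on $\bar S$ with the intersection theory of \S\ref{ssec:barkc}). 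Therefore $-(D^\sharp+K_S)=\phi^*\bigl(-(\bar D+K_{\bar S})\bigr)$ is the pullback of an ample class by a birational morphism, hence nef and big. For an irreducible curve $C$ on $S$ one has $C\cdot(D^\sharp+K_S)=\phi_*(C)\cdot(\bar D+K_{\bar S})$: if $C$ is a component of $\mathrm{Bk}(D)$ then $\phi_*(C)=0$ and this vanishes, while otherwise $\phi_*(C)$ is a nonzero effective curve and, $-(\bar D+K_{\bar S})$ being ample, the product is strictly negative. Hence $C\cdot(D^\sharp+K_S)=0$ precisely when $C$ is a component of $\mathrm{Bk}(D)$, which is case (ii).

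The genuinely new content — the dictionary relating $\bar\ell^2$ to $M(\ell+\mathrm{Bk}(D))$ and the exclusion of $\bar\ell^2<0$ by almost minimality — is elementary linear algebra on intersection matrices. The step I expect to require the most care is the invocation of the Cone and Contraction machinery, and of Kleiman's criterion, on the \emph{singular} surface $\bar S$ with $\bar D+K_{\bar S}$ in place of the canonical class: these are precisely the results imported from \cite{M,MT} in \S\ref{sec:peel}, so the work lies in checking that their hypotheses are met here (rational singularities of $\bar S$, the $\mathbb Q$-intersection theory of \S\ref{ssec:barkc}, and the negative definiteness of $\mathrm{Bk}(D)$) and in extracting from the contraction morphism exactly the structure claimed in (i) — a base point free pencil whose general member is $\PP^1$ and pulls back to such a pencil on $S$.
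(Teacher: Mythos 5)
Your proof is essentially correct, but there is no internal proof to compare it with: Proposition \ref{prop:extr} sits in the section where the authors merely recall the theory of peeling, and it is imported without proof from \cite{M,MT} (see the opening of \S\ref{sec:peel}). What you wrote is in substance the standard argument of that theory. Two comments. First, you correctly recognized that almost minimality of $(S,D)$ has to be read into ``the above setting'': without it a third case, $M(\ell+{\rm Bk}(D))<0$ (equivalently $\bar\ell^2<0$, a birational-type ray), is possible, and your orthogonal decomposition of ${\rm span}(\ell,{\rm Bk}(D))$ into $\mathbb Q\,\phi^*\bar\ell$ plus the negative definite span of ${\rm Bk}(D)$, combined with the definition of almost minimality (Property \ref{proper:off} is not even needed, only the definition), is exactly how this case is excluded classically; note also that, by the paper's convention, the extremal rays considered are those with $(\bar D+K_{\bar S})\cdot\bar\ell<0$, which your argument uses. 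Second, the only place where you lean on machinery beyond what the paper records is the contraction morphism $\psi\colon\bar S\to J$ for a fibration-type ray on the singular surface $\bar S$: the paper states only the Cone Theorem \ref{thm:cone}. The appeal is legitimate (the singularities of $\bar S$ are quotient singularities, coming from contracting admissible twigs, rods and forks, so the log contraction theorem applies, and the statement is in any case contained in \cite{M,MT}), but it can be bypassed in a way closer to Miyanishi's own derivation of case (i): the class $\mathfrak l:=\phi^*(N\bar\ell)$ is an integral nef class on the smooth rational surface $S$ with $\mathfrak l^2=0$ and $K_S\cdot\mathfrak l\leqslant(D^\sharp+K_S)\cdot\mathfrak l<0$ (since $D^\sharp\cdot\mathfrak l\geqslant0$), so Riemann--Roch on $S$, together with the non-pseudoeffectivity of $K_S$, shows directly that a multiple of $\mathfrak l$ moves in a base point free pencil of rational curves, giving $|\mathfrak f|$ without any MMP on $\bar S$. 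Your treatment of case (ii) — bigness of $\bar\ell$ forcing $\rho(\bar S)=1$, ampleness of $-(\bar D+K_{\bar S})$, nef and bigness of its pullback, and the projection-formula characterization of the curves orthogonal to $D^\sharp+K_S$ — is correct as it stands.
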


In case (ii) of Proposition \ref {prop:extr}, one says that $(S,D)$ (or $(\bar S,\bar D)$) is a \emph{logarithmic del Pezzo surface of rank 1}. If $\bar D=0$ (equivalently, if $D={\rm Supp}({\rm Bk}(D)))$,  we say that $(S,D)$ has \emph{shrinkable boundary} (in \cite  {MT} is used a different terminology, which would be confusing  here).  

The classification of logarithmic del Pezzo surface of rank 1 is still an open problem in its generality. One has the following:

\begin {conj}[See \cite {MT}] \label{conj:MT} If $(S,D)$ is a logarithmic del Pezzo surface of rank 1 with 
shrinkable boundary, then $\bar S=\mathbb P^ 2/G$, where $G$ is a finite subgroup of ${\rm PGL}(2,\mathbb C)$.  
\end{conj}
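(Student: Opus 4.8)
The plan is to restate the conjecture as a classification problem for the contracted surface $\bar S$ and then bring the available structural machinery to bear on it. By the bark-contraction construction of \S\ref{ssec:barkc}, the hypothesis that $(S,D)$ is a logarithmic del Pezzo surface of rank $1$ with shrinkable boundary means precisely that $\bar S$ is a normal projective rational surface with $\rho(\bar S)=1$, whose singular points are the quotient (i.e.\ log terminal) singularities resolved by the admissible rods and forks of \S\ref{ssec:def}, and with $-K_{\bar S}=-(\bar D+K_{\bar S})$ ample; in particular $\kod(S,D)=-\infty$ (combine \eqref{eq:1} with Property~\ref{property:bark}(iv)). So the goal is to prove that any such $\bar S$ is isomorphic to a quotient $\PP^2/G$, with $G$ a finite subgroup of ${\rm PGL}(2,\C)$ acting through its representation on binary quadratic forms (equivalently, $G$ cyclic, dihedral, or one of the three polyhedral groups), so that $K_{\bar S}^2=9/|G|$.

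The strategy is a dichotomy supplied by the structure theory of rank-one log del Pezzo surfaces. By Keel--McKernan's classification (\cite[Theorem 23.2]{KM}) --- or, when $D$ is connected, by the theorem of Miyanishi--Sugie and Fujita (\cite[Theorem 2.1.1]{M}) applied to $\bar S_{\rm reg}=S\setminus D$, using $\kod(S,D)=-\infty$ --- the surface $\bar S$ either carries an $\A^1$- or $\C^*$-fibration, or else it belongs to an explicit \emph{bounded} exceptional family. In the fibred case, which is the generic one, I would make the fibration relatively minimal, reduce the whole picture to a Hirzebruch surface carrying an explicit boundary divisor, and recover $\bar S$ from that data; the point to push through is that the orbifold universal cover of $\bar S_{\rm reg}$ (equivalently, a suitable index-one cover of $\bar S$) should then be identifiable with the complement of a smooth conic in $\PP^2$, with deck transformation group a polyhedral subgroup of ${\rm PGL}(2,\C)$, thereby exhibiting $\bar S=\PP^2/G$. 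For everything not covered by such a fibred description one is reduced to a finite case-by-case check.

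The main obstacle --- and the reason the conjecture remains open --- is precisely that case-by-case analysis of the bounded exceptional family, which is the part the Introduction flags as ``most likely to be quite hard'': for each surface on the Keel--McKernan list one has to decide, from its singularity package and its anticanonical degree, whether it can be written as $\PP^2/G$, and, when it can, produce the quotient presentation explicitly. Two further difficulties feed into this. First, one must delimit exactly which rank-one log del Pezzo surfaces with shrinkable boundary genuinely occur here: a verbatim reading of the statement would have to accommodate toric surfaces such as the quadric cone $\PP(1,1,2)$, and more generally $\PP(1,1,n)$, so the proof must invoke some extra rigidity --- for instance that the relevant $\bar S$ admits no $\C^*$-action --- that isolates the polyhedral quotients, and making that hypothesis precise is itself part of the work. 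Second, even in the fibred case the step from the \emph{existence} of the ruling to an honest finite morphism $\PP^2\to\bar S$ demands control of the orbifold fundamental group (equivalently, the monodromy of the fibration) of $\bar S_{\rm reg}$; I expect this group-theoretic step, hand in hand with the exceptional case analysis, to be where the genuine difficulty resides.
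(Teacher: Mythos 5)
There is a fundamental mismatch here: the statement you were asked about is not a theorem of the paper at all, but Conjecture~\ref{conj:MT}, quoted from Miyanishi--Tsunoda, and the paper explicitly records that the classification of rank-one logarithmic del Pezzo surfaces is still open (it only uses the conjecture as context for why case~(1) of Theorem~\ref{thm:main} cannot currently be resolved). So there is no proof in the paper to compare yours against, and your text does not supply one either. What you have written is a strategy outline, and its central step is exactly the part that is missing from the literature: after invoking \cite[Theorem 23.2]{KM} you defer to ``a finite case-by-case check'' of the bounded exceptional family and to an unverified identification of the orbifold universal cover of $\bar S\setminus{\rm Sing}(\bar S)$ with the complement of a conic in $\PP^2$. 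Neither of these is carried out, and you acknowledge as much (``the reason the conjecture remains open''). Reducing an open conjecture to the very analysis that the paper flags as ``most likely to be quite hard'' is not a proof; the gap is the whole argument.

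Two further points show the plan is not yet even a well-posed attack. First, the fibred case is weaker than you suggest: Miyanishi--Sugie/Fujita or an $\A^1$-/$\C^*$-fibration on $S\setminus D$ gives a ruling, but no mechanism is indicated for promoting that ruling to a finite morphism $\PP^2\to\bar S$ with polyhedral deck group; controlling the orbifold fundamental group is precisely the unproved content, not a routine reduction to a Hirzebruch surface. Second, your own remark about $\PP(1,1,n)$ is a red flag you leave unresolved: the pair $(\FF_n,E)$ has $\kod(\FF_n,E)=-\infty$, $E={\rm Supp}({\rm Bk}(E))$, and contracting $E$ gives the rank-one log del Pezzo surface $\PP(1,1,n)$ with shrinkable boundary in the sense of \S\ref{ssec:barkc}; whether such surfaces are of the form $\PP^2/G$ with $G\le{\rm PGL}(2,\C)$ (acting via binary quadratics) is exactly the kind of question a genuine proof would have to settle, and proposing to ``invoke some extra rigidity'' whose precise formulation ``is itself part of the work'' concedes that the hypotheses you would argue from are not yet identified. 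As it stands, the proposal restates the conjecture, names the known tools, and locates the difficulty; it does not close it.
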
 

Some properties of logarithmic del Pezzo surfaces of rank 1 with shrinkable boundary are described in \cite [\S 4]{MT}.  Keel and McKernan gave in \cite {KM} a classification theorem (Theorem 23.2), which applies to all but a bounded family of rank one logarithmic del Pezzo surfaces.

\section{The peeling approach to the contraction theorem}\label{sec:peelthm}

In this section we prove the following:

\begin{theorem}\label{thm:main} Let $(S,D,\mathfrak K )$ be a marked triple, with $S$ rational and $\kod(S,D)=-\infty$. Then $(S,D,\mathfrak K )$ is contractible unless, perhaps, $-(D^ \sharp+K_S)$ is nef and big and one of the following occurs:\\
\begin{inparaenum}
\item [(1)] $(S,D)$ is a logarithmic del Pezzo surface of rank 1 with shrinkable boundary;\\ 
\item [(2)]  if $(\tilde S, \tilde D)$ is an almost minimal model of $(S,D)$, then a connected component of $\tilde D$ (and only one) is a non--admissible fork. 
\end{inparaenum} 
\end{theorem}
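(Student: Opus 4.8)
The strategy is to reduce, via Theorem~\ref{thm:am} and the bark-contraction construction, to the study of the Mori cone of the normal surface $\bar S$, and then to run a minimal-model-style program steered by Proposition~\ref{prop:extr}. First I would pass to an almost minimal model $(\tilde S,\tilde D,\tilde{\mathfrak K})$ of $(S,D,\mathfrak K)$; by Theorem~\ref{thm:am} this preserves all log plurigenera, so $\kod(\tilde S,\tilde D)=-\infty$, and since $\mu\colon S\to\tilde S$ is a morphism it does not blow up $\mathfrak K$, so by Lemma~\ref{lem:sup} it suffices to prove contractibility of the almost minimal triple. By Corollary~\ref{lem:log} the pair is log smooth, and since all connected components of $\tilde D$ with positive arithmetic genus are excluded by $|K_S+\tilde D|=\emptyset$, all components of $\tilde D$ are smooth rational, so the peeling machinery of \S\ref{sec:peel} applies. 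Hence from now on we may assume $(S,D)$ is itself almost minimal, log smooth, with all components of $D$ rational.

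**Main dichotomy via the bark.** Form the bark-contraction $\phi\colon S\to\bar S$ and recall $D^\sharp+K_S=\phi^*(\bar D+K_{\bar S})$. By \eqref{eq:nef}, if $D^\sharp+K_S$ were nef we would get $\kod(S,D)\ge 0$, a contradiction; so $D^\sharp+K_S$ is not nef, hence $\bar D+K_{\bar S}$ is not nef on $\bar S$, and there is an extremal ray $\mathbb R_+[\bar\ell]$ of $\overline{\mathrm{NE}}(\bar S)$ with $(\bar D+K_{\bar S})\cdot\bar\ell<0$, as furnished by Theorem~\ref{thm:cone}. Now apply Proposition~\ref{prop:extr} to this ray. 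In case~(i) we obtain a base point free pencil $|\mathfrak f|$ of rational curves on $S$; contracting $D^\sharp$-components appropriately and making the induced fibration $\varphi_{|\mathfrak f|}\colon S\to\PP^1$ relatively minimal yields a birational morphism $S\to\FF_n$ (not blowing up $\mathfrak K$, being a morphism). Because $\kod(S,D)=-\infty$, the image of $D$ meets a general fibre in at most one point: otherwise the adjunction/Fujita argument used in the proof of Theorem~\ref{thm:cool} (and Claim~\ref{cl:n}) would force $|K+D|\ne\emptyset$. So the image of $D$ is, up to components in fibres, either contained in fibres or a unisecant; in all cases Proposition~\ref{lem:DJ} (applied to $\FF_n$ with $D'=\varepsilon E+D''$, $D''\in|E+dF|$) shows the resulting triple is contractible, hence so is $(S,D,\mathfrak K)$ by Lemma~\ref{lem:sup}. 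This disposes of case~(i) of Proposition~\ref{prop:extr} entirely.

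**The del Pezzo case and the exceptional forks.** It remains to treat case~(ii) of Proposition~\ref{prop:extr}, i.e.\ $\rho(\bar S)=1$ and $-(D^\sharp+K_S)$ nef and big — precisely the logarithmic del Pezzo surface of rank~$1$ situation. If $\bar D=0$, equivalently $D=\mathrm{Supp}(\mathrm{Bk}(D))$, we are in alternative~(1) of the statement and there is nothing more to prove. So assume $\bar D\ne 0$, i.e.\ $D^\sharp\ne 0$. The core of the argument is then to show that, apart from the single listed exception, one can still find a contraction. Here I would analyse the structure of $D$: since $D^\sharp\ne 0$ and $-(D^\sharp+K)$ is big, $D^\sharp$ cannot be too large, and $D^\sharp+K$ must have negative intersection with some component — by Property~\ref{proper:gamma} and Property~\ref{property:bark}(iii) such a component is either an irrelevant component (a smooth rational curve with self-intersection $\ge-1$) of a non-admissible twig, rod, or fork, or is itself part of $D^\sharp$. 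Chasing through the finitely many combinatorial types of $D^\sharp$ relative to $\mathrm{Bk}(D)$, one finds in each case either a $(-1)$-curve $E$ with $E\cdot D\le1$ (impossible by almost minimality unless it is an isolated or terminal component, which lets us contract and induct as in Remark~\ref{itaka}), or a rational curve $M$ with $M^2\ge0$ and $M\cdot D\le1$ giving a morphism to a minimal rational surface as in the end of the proof of Theorem~\ref{thm:cool}; the one configuration that resists this is exactly when $\bar D$ shrinks to a single non-admissible fork component, which is alternative~(2). Throughout, the marking $\mathfrak K$ is carried along harmlessly: every birational map we use is either a morphism or one built from elementary transformations and homaloidal nets as in Example~\ref{ex:nb}, so by choosing the auxiliary multiplicities large enough it does not blow up $\mathfrak K$, and Lemma~\ref{lem:sup} then transfers contractibility back.

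**Expected main obstacle.** The delicate step is the last one: the finite but intricate case analysis in the rank-$1$ logarithmic del Pezzo situation with $\bar D\ne0$, identifying exactly which combinatorial configurations of $D=D^\sharp+\mathrm{Bk}(D)$ admit a contraction and pinning down the non-admissible fork as the sole genuine exception. This is where the connectedness hypothesis of Theorem~\ref{thm:cool} was being used for free, and its absence forces one to control how the several connected components of $D$ interact with the extremal ray and with the fibration or anticanonical map produced. The bound $(\bar D+K_{\bar S})\cdot\bar\ell_i\ge-3$ from the Cone Theorem keeps the list finite, but matching each case against the fork graphs of \cite[pp.~436--437]{MT} is the computational heart of the proof.
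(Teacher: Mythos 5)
Your overall skeleton coincides with the paper's: reduce to an almost minimal triple, show $\bar D+K_{\bar S}$ is not nef so Proposition \ref{prop:extr} applies, settle case (i) via the fibration and Proposition \ref{lem:DJ}, and identify the shrinkable-boundary case of (ii) with alternative (1). But in case (ii) with non-shrinkable boundary your argument stops exactly where the real work begins. ``Chasing through the finitely many combinatorial types of $D^\sharp$ relative to ${\rm Bk}(D)$\dots the one configuration that resists this is exactly when $\bar D$ shrinks to a single non-admissible fork component'' is the statement to be proved, not a proof: you give no reason why the list of types is finite, nor what it is, and the bound $(\bar D+K_{\bar S})\cdot\bar\ell_i\geqslant -3$ from the Cone Theorem does not produce it. The paper's proof rests on the structure theorems for rank-one logarithmic del Pezzo surfaces with non-shrinkable boundary (\cite[Lemma 1.14.5, Lemma 3.14.6, Theorem 3.15.1]{M}, \cite[Lemma 11, Theorem 12]{MT}): $Y=D-{\rm Supp}({\rm Bk}(D))$ is connected with one or two components, and is either an irrelevant component (or two meeting components) of a rod whose remaining pieces are admissible twigs, or the central component of a non-admissible fork. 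Given this, the contraction is built explicitly: for $Y^2=m\geqslant 1$ one blows up $m-1$ general points of $Y$ and uses $\varphi_{|Y|}\colon S\to\PP^2$, which contracts $D-Y-Y_1-Y_2$ and maps $Y,Y_1,Y_2$ to three lines, handled by Corollary \ref{cor:plane}; for $m=0$ one raises $Y_2^2$ by elementary transformations; for $m=-1$ almost minimality forces $M(R)$ not negative definite, so blowing down $Y$ and the successive $(-1)$-curves eventually produces a component of non-negative self-intersection. None of this (nor a substitute for the cited structure results) appears in your sketch. Note also that your parenthetical ``impossible by almost minimality'' for a $(-1)$-curve $E$ with $E\cdot D\leqslant 1$ conflates almost minimality with the smallness notion of \S\ref{ssec:small}: almost minimality constrains $(D^\sharp+K)\cdot E$ and $M(E+{\rm Bk}(D))$, and does not by itself exclude such curves.

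A smaller point in case (i): you justify that the image of $D$ meets a general fibre at most once by appealing to ``the adjunction/Fujita argument'' of Theorem \ref{thm:cool}, but that argument needs $1$-connectedness of $D+E$, which is exactly what fails when $D$ is disconnected -- the whole point of Theorem \ref{thm:main}. The correct (and simpler) route is the paper's: $|f|$ is base point free on $\bar S$, so $\mathfrak f\cdot{\rm Bk}(D)=0$, hence $\mathfrak f\cdot(D+K_S)=\mathfrak f\cdot(D^\sharp+K_S)<0$; since $\mathfrak f\cdot K_S=-2$ and $\mathfrak f$ is nef, this gives $\mathfrak f\cdot D\leqslant 1$ directly. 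With that fix case (i) goes through as you describe, but the non-shrinkable-boundary half of case (ii) remains a genuine gap.
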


By Corollary \ref {lem:log},  $\kod(S,D)=-\infty$  implies that $(S,D)$ is log smooth and that, for any connected curve $C\leqslant D$, one has $p_a(C)=0$. Since $\mu: S\to \tilde S$ is a morphism, which does not blow--up $\mathfrak K$, we can pretend that $(S,D,\mathfrak K )$ is almost minimal (see the end of \S \ref {ssec:amin}). 

The proof of Theorem \ref {thm:main} will consist in a number of steps. The first one is to show that we are in position to apply Proposition \ref  {prop:extr}.

\begin{lemma}\label{lem:nnef} If $(S,D)$ is almost minimal and $\kod(S,D)=-\infty$, then $\bar D+K_{\bar S}$ is not nef.
\end{lemma}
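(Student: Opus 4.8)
The plan is to deduce the statement by contradiction from the implication \eqref{eq:nef}, using the description \eqref{eq:1} of $\bar D+K_{\bar S}$ via the bark contraction $\phi\colon S\to\bar S$.

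Assume, for contradiction, that $\bar D+K_{\bar S}$ is nef. By the construction recalled in \S\ref{ssec:barkc}(ii), there is a positive integer $N$ such that $N(\bar D+K_{\bar S})$ is a Cartier divisor on $\bar S$; being a positive multiple of a nef divisor, it is itself nef and Cartier. Since $\phi$ is a proper (birational) morphism, the pullback $\phi^{*}\bigl(N(\bar D+K_{\bar S})\bigr)$ is a nef divisor on $S$ (pullback of a nef Cartier divisor under a proper morphism is nef, by the projection formula). But, by \eqref{eq:1}, $\phi^{*}(\bar D+K_{\bar S})=D^{\sharp}+K_S$, so $N(D^{\sharp}+K_S)$ is nef, hence $D^{\sharp}+K_S$ is nef.

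Now \eqref{eq:nef} applies and gives $\kod(S,D)\geqslant 0$, contradicting the hypothesis $\kod(S,D)=-\infty$; therefore $\bar D+K_{\bar S}$ is not nef. I do not expect a real obstacle here: the only point deserving a word of care is that the pullback of a nef $\mathbb Q$-Cartier divisor along the birational morphism $\phi$ to the (normal, rational-singularities) surface $\bar S$ is again nef, which is routine once one clears denominators and uses that $\phi_{*}$ of an irreducible curve is either $0$ or an effective curve on $\bar S$; alternatively, for the $\phi$-exceptional components one may simply invoke Property \ref{property:bark}(iii), which gives $(D^{\sharp}+K_S)\cdot C=0$ there. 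Note that almost minimality is not really used beyond placing us in the setting of \S\ref{ssec:amin}--\S\ref{ssec:barkc}; its role is rather to let us subsequently apply Proposition \ref{prop:extr} once nefness of $\bar D+K_{\bar S}$ has been excluded.
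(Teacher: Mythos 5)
Your argument is correct and is essentially the paper's proof read contrapositively: the paper picks a curve $Z$ with $(D^\sharp+K_S)\cdot Z<0$ (which exists by \eqref{eq:nef}) and pushes it forward using $\phi^*(N\bar Z)=NZ+E$ with $(D^\sharp+K_S)\cdot E=0$, while you assume $\bar D+K_{\bar S}$ nef and pull back via \eqref{eq:1} and the projection formula to contradict \eqref{eq:nef} --- the same ingredients in the opposite direction. No gap; your side remark that almost minimality only serves to place the lemma in the setting where Proposition \ref{prop:extr} will later apply is also consistent with the paper.
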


\begin{proof} By \eqref {eq:nef},  $D^\sharp+K_S$ is not nef. Hence there is an irreducible curve $Z$ on $S$ such that $(D^\sharp+K_S)\cdot Z<0$. Set $\bar Z=\phi_*(Z)$ and note that $\bar Z$ is non--zero, because $Z$ is not in ${\rm Supp}({\rm Bk}(D))$ (see Property \ref {property:bark}--(iii)).
We claim that $(\bar D+K_{\bar S})\cdot \bar Z<0$. Indeed, by \eqref {eq:1}, we have
\[
(\bar D+K_{\bar S})\cdot \bar Z= \frac 1N (D^\sharp+K_S)\cdot \phi^* (N\bar Z).
\]
One has $\phi^* (N\bar Z)=NZ + E$, with $E$ contracted by $\phi$, hence ${\rm Supp}(E)\leqslant {\rm Supp}({\rm Bk}(D))$. By Property \ref {property:bark}--(iii), one has 
$(D^\sharp+K_S)\cdot E=0$, hence
\[
(\bar D+K_{\bar S})\cdot \bar Z= (D^\sharp+K_S)\cdot Z<0.
\]
\end{proof}

By Lemma \ref {lem:nnef} and  Proposition \ref  {prop:extr},  there are extremal rays in ${\rm NS}(\bar S)$. The proof of Theorem \ref {thm:main} consists in the discussion of the two cases corresponding to (i) and (ii) of  Proposition \ref  {prop:extr}.

\subsection {The proof of Theorem \ref {thm:main} in case (i) of Proposition \ref  {prop:extr}}\label{ssec:i}  As in (i) of Proposition \ref  {prop:extr}, let $\mathfrak f=\phi^ *(f)$. Since $|f|$ is base point free on $\bar S$, then $\mathfrak f\cdot {\rm Bk}(D)=0$. 
From $f\cdot (\bar D+K_{\bar S})<0$ we have $\mathfrak f\cdot (D+K_S)=\mathfrak f\cdot (D^ \sharp+K_S)<0$. Since $\mathfrak f\cdot K_S=-2$ and $\mathfrak f$ is nef, we find $\mathfrak f\cdot 
D\leqslant 1$.  The argument is now similar to the one at the end of the proof of Theorem \ref {thm:cool}. 

Indeed, by making
$\varphi_{|\mathfrak f|}: S\to \PP^ 1$ \emph{relatively minimal},  
 we have a birational morphism $f: S\to \FF_n$ for some $n\in \mathbb N$, which does not blow--up $\mathfrak K$. Set $f_*(D,\mathfrak K)=(f_*(D),\mathfrak K')$. 
Since $\mathfrak f\cdot D\leqslant 1$, we have that:\\
\begin{inparaenum}
\item [(a)] either $f_*(D)=0$,\\
\item [(b)] or $f_*(D)$ consists of finitely many distinct curves of $|\mathfrak f|$, \\
\item [(c)] or $f_*(D)$ consists of finitely many (may be 0) curves of $|\mathfrak f|$, plus an irreducible curve $\Gamma$ such that $\Gamma\cdot \mathfrak f=1$.
\end{inparaenum}

In case (a), the proof of Theorem \ref {thm:main} is finished. In case (b) we can contract $f_*(D)=\mathfrak f_1+\ldots+\mathfrak f_\ell$, with $\mathfrak f_i\in |\mathfrak f|$, for $1\leqslant i\leqslant \ell$ with a series of elementary transformations based at general points of $\mathfrak f_1,\ldots,\mathfrak f_\ell$ respectively. In case (c) we proceed as in the final part of the  proof of Theorem \ref {thm:cool}.

\subsection {The proof of Theorem \ref {thm:main} in case (ii) of Proposition \ref  {prop:extr}} Here we are in the rank 1 logarithmic del Pezzo case.  The  shrinkable boundary case corresponds to  (1) in Theorem \ref {thm:main}. So we are left with the non--shrinkable boundary case. 

From \cite [Lemma 1.14.5, Lemma 3.14.6 and Theorem 3.15.1] {M} and \cite [Lemma 11 and Theorem 12] {MT}, we have that $Y:=D-{\rm Supp}({\rm Bk}(D))$ is connected and consists of one or two irreducible components. Precisely:\\
\begin{inparaenum}
\item [(i)] If $Y$ is irreducible, then:\\
\item [(i-a)] either $Y$ is an irrelevant component of a rod $R$, hence $m:=Y^ 2\geqslant -1$, and $R-Y$ consists of one or two admissible twigs;\\
\item [(i-b)] or $Y$ is the central component of a non--admissible fork;\\
\item [(ii)] if $Y$ has two irreducible components $Y_1,Y_2$, then $Y_1\cdot Y_2=1$, $Y_1+Y_2$ belongs to a rod $R$ and $R-Y_1-Y_2$ consists of one or two admissible twigs, accordingly $Y^ 2_i\geqslant -1$ for $i=1,2$.
\end{inparaenum}

If (i-b) holds we are in case (2) of Theorem \ref {thm:main}.

Suppose we are in case (i-a). Write $R=Y+T_1+T_2$, where $T_1,T_2$ are two distinct admissible twigs, with $0\leqslant \ell(T_1)\leqslant \ell(T_2)$.   If $\ell(T_i)\neq 0$, we denote by $Y_i$ the component of $T_i$ which intersects $Y$, and by $P_i$ the intersection point of $Y_i$ with $Y$, for $i=1,2$.  

Assume  $m\geqslant 1$ and $\ell(T_i)>0$  for $i=1,2$ (otherwise the proof is similar, but easier).  
By blowing up $m-1$ general points of $Y$, we may assume that $m=1$. Then the morphism  $\phi_{|Y|}: S\to  \mathbb P^ {2}$  is birational, contracts $D-Y-Y_1-Y_2$ to points and maps $Y, Y_1,Y_2$ to three lines
$C,C_1, C_2$ respectively, and  we are reduced to the case of $(\mathbb P^ 2, C+C_1+ C_2, \mathfrak C)$. This is contractible by Corollary \ref {cor:plane}. 


Let now $m=0$, so that $|Y|$ is a base point free pencil of rational curves. By making a series of elementary transformations at $P_1$, we can increase $Y_2^ 2$ as much as we like and make it positive. Then, to finish he proof, we can argue as in the case $m\geqslant 1$.

Finally, assume $m=-1$. We note that, since $(S,D)$ is almost minimal, $M(R)$ is not negative definite. 
Then we blow down $Y$ and consecutively all $(-1)$--curves which appear in the image of $R$. Since $M(R)$ is not negative definite, after a finite number of steps we find a rod with a smooth rational component of non--negative self--intersection. Then we can finish as above.  

Case (ii) can be treated in a similar way. Thus Theorem \ref {thm:main} is proved in this case too.

\bigskip


\begin{thebibliography}{}

\bibitem{Alberich}
{Alberich-Carrami\~{n}ana, Maria}:
\textit{Geometry of the plane Cremona maps},
Lecture Notes in Mathematics \textbf{1769},
Springer-Verlag, Heidelberg, 2002.


\bibitem{CC2} 
A. Calabri, C. Ciliberto, 
\textit{On Cremona contractibility}, Rend. Sem. Mat. Univ. Pol. Torino, {\bf 71} (2013),  389--400. 

\bibitem{CC}  A. Calabri,  C. Ciliberto, \emph{On Cremona contractibility of unions of lines in the plane},  math arXiv:1503.05850 (to appear on Kyoto Journal of Mathematics). 

\bibitem {CE} 
G. Castelnuovo, F. Enriques, 
\textit{Sulle condizioni di razionalit\`a dei piani doppi},
Rend. Circ. Mat. Palermo \textbf{14} (1900), 290--302.

\bibitem{Coolidge}
J. L. Coolidge, 
\textit{A treatise on algebraic plane curves},
Dover Publ., New York, 1959.


\bibitem{Fu}
T. Fujita, \textit{On Zariski problem},
 Proc. Japan Acad. Ser. A Math. Sci. \textbf{55} (1979), 106--110.
 
 \bibitem {Iit} S. Iitaka,  \textit{Characterization of two lines on a projective plane}, in Algebraic Geometry, Springer Berlin Heidelberg, (1983), 432--448. 
 
 \bibitem {Iit2}  S. Iitaka,  \textit{Classification of reducible plane curves}, Tokyo J. Math. 11 (1988), 363--379. 
  
 \bibitem {KM} S. Keel, J. McKernan, \textit{Rational curves on quasi-projective surfaces}, {\bf 669}, American Mathematical Soc., 1999.
 
 \bibitem{KT}
H. Kojima, T. Takahashi,
\textit{Reducible curves on rational surfaces},
Tokyo J. Math. \textbf{29} (2006), no. 2, 301--317.
 
 \bibitem{KumarMurthy}
N. M. Kumar, M. P.  Murthy, 
\textit{Curves with negative self-intersection on rational surfaces},
J. Math. Kyoto Univ., \textbf{22} (1982/83), no.\ 4, 767--777.
 
 \bibitem{Marletta}
G. Marletta, 
\textit{Sulla identit\`a cremoniana di due curve piane},
Rend. Circolo Mat. Palermo, \textbf{24} (1907), no.\ 1, 223--242. 

\bibitem{Marletta2}
G. Marletta, 
\textit{Sui sistemi aggiunti dei varii indici alle curve piane},
Rend. del R. Ist. Lombardo di Scienze e Lettere, (2) \textbf{43} (1910), 781--804. 
 

\bibitem{M}
M. Miyanishi, \emph{Open algebraic surfaces}, CRM Monograph Series {\bf 12}, Amer. Math. Soc., 2000.
 
 \bibitem {MT} M. Miyanishi, S. Tsunoda, \emph{Open algebraic surfaces}, in Algebraic Geometry, Bowdoin 1985, Part 1, Proc. of Symp. in Pure math, {\bf 46}, (1987), 435--450.
 
 \bibitem{Pompilj}
G. Pompilj, 
\textit{Sulle trasformazioni cremoniane del piano che posseggono una curva di punti uniti},
Rend. del Sem. Mat. dell'Univ. di Roma (4) \textbf{2} (1937), 3--43.


\end{thebibliography}
\end{document}